\newtheorem{theorem}{Theorem}[section]
\newtheorem{lemma}[theorem]{Lemma}
\newtheorem{proposition}[theorem]{Proposition}
\newtheorem{definition}[theorem]{Definition}
\newtheorem{assumption}[theorem]{Assumption}
\begin{document}
\setlength\arraycolsep{2pt}
\title{Nonparametric Estimation of the Transition Density Function for Diffusion Processes}
\author{Fabienne COMTE$^{\dag}$}
\email{fabienne.comte@u-paris.fr}
\author{Nicolas MARIE$^{\diamond}$}
\email{nmarie@parisnanterre.fr}
\date{}
\maketitle
\noindent
$^{\dag}$Universit\'e Paris Cit\'e, CNRS, MAP5, F-75006 Paris, France.\\
$^{\diamond}$Universit\'e Paris Nanterre, CNRS, Modal'X, 92001 Nanterre, France.
%


%
\begin{abstract}
We assume that we observe $N$ independent copies of a diffusion process on a time-interval $[0,2T]$. For a given time $t$, we estimate the transition density $p_t(x,y)$, namely the conditional density of $X_{t + s}$ given $X_s = x$, under conditions on the diffusion coefficients ensuring that this quantity exists. We use a least squares projection method on a product of finite dimensional spaces, prove risk bounds for the estimator and propose an anisotropic model selection method, relying on several reference norms. A simulation study illustrates the theoretical part for Ornstein-Uhlenbeck or square-root (Cox-Ingersoll-Ross) processes.
\end{abstract}
%


%
\section{Introduction}\label{section_introduction}
In this paper, we consider the stochastic differential equation (SDE)
\begin{equation}\label{main_equation}
X_t = x_0 +\int_{0}^{t}b(X_s)ds +
\int_{0}^{t}\sigma(X_s)dW_s
\textrm{ $;$ }
t\in [0,2T],
\end{equation}
where $x_0\in\mathbb R$, $W = (W_t)_{t\in [0,2T]}$ is a Brownian motion, $b,\sigma\in C^1(\mathbb R)$, and $b'$ and $\sigma'$ are bounded. Under these conditions on $b$ and $\sigma$, Equation (\ref{main_equation}) has a unique (strong) solution $X = (X_t)_{t\in [0,2T]}$. Under additional conditions, the transition density $p_t(x,.)$ is well defined and can be understood as the conditional density of $X_{s + t}$ given $X_s = x$. The two time indexes $s+t$ and $s$ explain why the process is considered on $[0,2T]$, while each index evolves in $[0,T]$. Our aim is to provide and study a nonparametric estimator of the function $(x,y)\mapsto p_t(x,y)$ ($t\in [0,T]$), computed from $N$ independent copies of $X$ observed on the time-interval $[0,2T]$.\\
Let us start with some comments about the observation scheme. It is related to functional data analysis, dealing with samples of infinite dimensional data (see Ramsay and Silverman \cite{RS07} and Wang {\it et al.} \cite{WCM16}). Several examples of applications in biology and meteorology (resp. energy) are developed in Ramsay and Silverman \cite{RS07} (resp. Candanedo {\it et al.} \cite{CANDANEDO17}). In Section \ref{section_motivating_examples}, we also present an example of financial model leading to this observation setting.\\
Since few years, statistical inference from copies of diffusion processes, especially estimators of the drift function $b$, has been deeply investigated. In that way, tools related to statistical inference for stochastic processes have been used in the world of functional data analysis. From independent copies of the solution $X$ of an SDE, projection least squares estimators of $b$ have been studied in Comte and Genon-Catalot \cite{CGC20} for continuous time observations, in Denis {\it et al.} \cite{DDBM21} for discrete time observations with a classification purpose in a parametric setting, and in Denis {\it et al.} \cite{DDBM20} for the same question in the nonparametric framework. Marie and Rosier \cite{MR22} propose a kernel based Nadaraya-Watson estimator of the drift function $b$, with bandwidth selection relying on the Penalized Comparison to Overfitting criterion recently introduced in Lacour {\it et al.} \cite{LMR17}. Still for independent copies of $X$, Halconruy and Marie \cite{HM23} investigate the properties of the projection least squares estimator of $b$ when the Brownian motion $W$ is replaced by a L\'evy process in Equation (\ref{main_equation}), and Comte and Genon-Catalot \cite{CGC23b} and Marie \cite{MARIE23} deal with estimators of the drift function in non-autonomous SDE. More recently, copies-based estimation with dependency has been investigated. For instance, one may refer to Della Maestra and Hoffmann \cite{DMH22} or Belomestny {\it et al.} \cite{BPP21}, dealing with nonparametric estimators in interacting particle systems and McKean-Vlasov models. However, the question of transition density estimation has not been considered in this setting, while this function is a key quantity of the process.
\\
\\
A transition density is a conditional density, but it is beyond our scope to describe all works dealing with the nonparametric estimation of the conditional density of $Y$ given $X=x$ when $N$ i.i.d. observations of the couples $(X_i,Y_i)$ are available. We refer to the recent paper by Comte and Lacour \cite{CL23}, and the references therein, on this topic. The question of estimating a transition density has already been studied, by several authors, for discrete samples of general Markov chains. The discrete time observation of one path of the solution $X$ of Equation (\ref{main_equation}), say $X_{k\Delta}$ with $k = 1,\dots,n$ and $\Delta > 0$, fits to this framework. Nonparametric estimation strategies can be based on contrast minimization, as in Lacour \cite{LACOUR07}, an approach we also follow and adapt to our setting. A transition can also be seen as a ratio of two densities, namely the density of $(X_s,X_{s + t})$ divided by that of $X_s$, and then the ratio of the corresponding density estimators may be proposed and studied (see Lacour \cite{LACOUR08}). The paper by Sart \cite{SART14} presents and studies estimators corresponding to the two strategies. Recently, in higher dimension, L\"offler and Picard \cite{LP21} estimate a general transition operator relying on singular value decomposition.
\\
\\
The framework considered in the present work is new. Indeed, the question of estimating the transition density $p_t(x,y)$ ($t\in (0,T]$) from the observation on $[0,2T]$ of $N$ independent copies $X^i :=\mathcal I(x_0,W^i)$ ($i\in\{1,\dots,N\}$) of the solution $X$ of Equation (\ref{main_equation}), where $\mathcal I(\cdot)$ is the It\^o map for Equation (\ref{main_equation}) and $W^1,\dots,W^N$ are $N$ independent copies of the Brownian motion $W$, has not been yet investigated. Let us consider $\mathcal S_{\bf m} :=\mathcal S_{\varphi,m_1}\otimes\mathcal S_{\psi,m_2}$, where $\mathcal S_{\varphi,m_1} := {\rm span}\{\varphi_1,\dots,\varphi_{m_1}\}$ (resp. $\mathcal S_{\psi,m_2} := {\rm span}\{\psi_1,\dots,\psi_{m_2}\}$), $\varphi_1,\dots,\varphi_{N_T}$ (resp. $\psi_1,\dots,\psi_{N_T}$) are continuous functions from $I$ (resp. $J$) into $\mathbb R$ such that $(\varphi_1,\dots,\varphi_{N_T})$ (resp. $(\psi_1,\dots,\psi_{N_T})$) is an orthonormal family in $\mathbb L^2(I,dx)$ (resp. $\mathbb L^2(J,dx)$), $N_T := [NT] + 1$ and $I,J\subset\mathbb R$ are non-empty intervals. In this paper, we study the following projection least squares estimator of the transition density function $p_t$:
\begin{displaymath}
\widehat p_{\mathbf m,t}(x,y) :=
\sum_{j = 1}^{m_1}\sum_{k = 1}^{m_2}
\widehat\Theta_{j,k}\varphi_j(x)\psi_k(y),
\end{displaymath}
where $\mathbf m = (m_1,m_2)\in\{1,\dots,N_T\}^2$, and the coefficients $\widehat\Theta_{j,k}$ are computed from the observed paths of the processes $(X_{t}^{i})_{0\leqslant t\leqslant 2T}$ ($i\in\{1,\dots,N\}$). This estimator is the minimizer over functions $\tau\in\mathcal S_{\bf m}$ of an objective function defined as the empirical version of
\begin{displaymath}
\|\tau - p_t\|_{f}^{2} =
\int_{-\infty}^{\infty}\int_{-\infty}^{\infty}
(\tau(x,y) - p_t(x,y))^2f(x)dxdy;
\end{displaymath}
the $f$-weighted distance between $\tau$ and $p_t$ with
\begin{displaymath}
f(\cdot) =\frac{1}{T}
\int_{0}^{T}p_s(x_0,\cdot)ds.
\end{displaymath}
Roughly speaking, $\widehat p_{\mathbf m,t}$ mimics the projection of $p_t$ on $\mathcal S_{\bf m}$ with respect to the $f$-weighted norm $\|.\|_f$, and then our purpose is to prove that the empirical norm and the empirical criterion defining $\widehat p_{\mathbf m,t}$ are near of their theoretical counterparts with high probability. In this way, we are able to evaluate the quadratic risk of the estimator.\\
A risk bound is established on $\widehat p_{\mathbf m,t}$, for a fixed $\mathbf m$. Under regularity assumptions and an adequate choice of $\mathbf m = (m_1,m_2)$, a rate of convergence is provided for the projection least squares estimator of $p_t$. Then, we focus on the example where $p_t$ belongs to an anisotropic $2$-dimensional Sobolev-Hermite space. Clearly, other regularity spaces may be considered, but since the Hermite basis is $\mathbb R$-supported, this example is very instructive and allows us to rely on some results already established in Comte and Lacour \cite{CL23} for the estimation of the conditional density. The established rates are achieved when $m_1$ and $m_2$ - the coordinates of $\mathbf m$ - are chosen as $N$ to a power depending on the regularity orders of $p_t$. Since these regularity parameters are unknown, the previous choice of $\mathbf m$ cannot be done in practice. This is why a data-driven selection procedure is provided and studied, allowing us to select a model $\widehat{\bf m} = (\widehat m_1,\widehat m_2)$ directly from the data. The couple of dimensions $(\widehat m_1,\widehat m_2)$ is selected in a random subset of $\{1,\dots,N_T\}^2$ thanks to a penalized version of the estimation contrast, which corresponds to the idea of a data-driven squared-bias/variance compromise. Then, a risk bound is established on the adaptive estimator $\widehat p_{\widehat{\bf m},t}$, which automatically achieves the squared-bias/variance tradeoff up to a $\log(N)$ factor. We also prove that the bound can be improved, and the $\log(N)$-loss corrected, in two special cases: when $t > t_0$ for a given $t_0 > 0$, and when $t > 0$ with $I$ - the support of the basis $\varphi$ - compact.
\\
\\
To start with, in a short preliminary Section \ref{section_motivating_examples}, we propose two application settings where our observation scheme is meaningful, and where our estimator of the transition density may be used. The projection least squares estimator is defined in Section \ref{section_projection_LS_estimator}, and non-adaptive risk bounds on a given model are established in Section \ref{section_risk_bounds}. A model selection procedure and an adaptive risk bound are provided in Section \ref{section_model_selection}. The whole procedure is illustrated through simulations in Section \ref{section_numerical_experiments}. Finally, Section \ref{section_conclusion} provides concluding remarks and proofs are gathered in Appendix \ref{section_proofs}.
\\
\\
\textbf{Notation and basic definitions:}
\begin{itemize}
 \item The space of $m_1\times m_2$ matrices with coefficients in $\mathbb R$ is denoted by $\mathcal M_{m_1,m_2}(\mathbb R)$.
 \item For every $M\in\mathcal M_{m_1,m_2}(\mathbb R)$, the transpose of $M$ is denoted by $M^*$, and the largest eigenvalue of $MM^*$ is denoted by $\|M\|_{\rm op}^{2}$.
 \item The usual inner product (resp. norm) on $\mathbb L^2(\mathbb R^2)$ is denoted by $\langle .,.\rangle$ (resp. $\|.\|$). Recall that, for every $\tau,\tau^{\star}\in\mathbb L^2(\mathbb R^2)$,
 \begin{displaymath}
 \langle\tau,\tau^{\star}\rangle =
 \int_{\mathbb R^2}\tau(x,y)\tau^{\star}(x,y)dxdy.
 \end{displaymath}
 For the sake of readability, the usual inner product and the associated norm on $\mathbb L^2(\mathbb R)$ are denoted in the same way.
\end{itemize}
%


%
\section{Two motivating examples}\label{section_motivating_examples}
In this section, we briefly present two possible applications of our estimation method of $p_t$.
\begin{itemize}
 \item Assume that $\sigma(\cdot)^2 > 0$, and consider the parabolic partial differential equation defined by
 \begin{equation}\label{PDE}
 \frac{\partial u}{\partial t}(t,x) +
 \frac{1}{2}\sigma(x)^2\frac{\partial^2u}{\partial x^2}(t,x) +
 b(x)\frac{\partial u}{\partial x}(t,x) = 0,\quad
 u(T,x) = v(x),
 \end{equation}
 where $v :\mathbb R\rightarrow\mathbb R$ is a known twice continuously differentiable function. Moreover, let $X^{t,x}$ be the solution of Equation (\ref{main_equation}) starting from $x\in\mathbb R$ at time $t\in [0,T)$. By Lamberton and Lapeyre \cite{LL08}, Theorem 5.1.7, the solution of Equation (\ref{PDE}) is given by
 \begin{displaymath}
 F(t,x) :=\mathbb E(\varphi(X_{T}^{t,x})) =
 \int_{-\infty}^{\infty}v(y)p_{T - t}(x,y)dy.
 \end{displaymath}
 Thus, the solution of Equation (\ref{PDE}) can be estimated by
 \begin{equation}\label{Monte_Carlo_PDE}
 \widehat F_{\mathbf m}(t,x) :=
 \int_{-\infty}^{\infty}v(y)\widehat p_{\mathbf m,T - t}(x,y)dy.
 \end{equation}
 In the same spirit as Milstein {\it et al.} \cite{MSS04}, (\ref{Monte_Carlo_PDE}) provides a Monte-Carlo method which may replace the usual finite difference algorithm to solve Equation (\ref{PDE}) numerically. Note that in this example, $b$, $\sigma$ and $v$ are assumed to be known, and then one may simulate a large amount $N$ of copies of $X$ observed at high-frequency on $[0,2T]$ in order to compute $\widehat p_{\mathbf m,T - t}$, and then $\widehat F_{\bf m}(t,x)$. In other words, the copies-based observation scheme is appropriate in this context.
 \item The options pricing in finance is another possible application of our estimation method of $p_t$. Let $X = (X_t)_{t\in\mathbb R}$ be the prices process of a risky asset, which risk-neutral dynamics are modeled by
 \begin{equation}\label{prices_model}
 \frac{dX_t}{X_t} = (r -\delta)dt +\sigma(X_t)dW_t,
 \end{equation}
 where $r > 0$ is the risk-free rate, $\delta > 0$ the dividend rate, $W = (W_t)_{t\in\mathbb R}$ is a two-sided Brownian motion, and $X_0$ is a $\sigma((W_t)_{t\in\mathbb R_-})$-measurable square integrable random variable. Consider also the option with payoff $v(X_{T}^{0,x})$, which price $P(x,T)$ satisfies
 \begin{displaymath}
 P(x,T) =
 e^{-rT}\mathbb E(v(X_{T}^{0,x})) =
 e^{-rT}\int_{0}^{\infty}v(y)p_T(x,y)dy.
 \end{displaymath}
 Thus, when $r <\delta$, an estimator of $P(x,T)$ is given by
 \begin{displaymath}
 \widehat P_{\bf m}(x,T) :=
 e^{-rT}\int_{0}^{\infty}v(y)\widehat p_{\mathbf m,T}(x,y)dy,
 \end{displaymath}
 where the copies $X^1,\dots,X^N$ of $(X_{t}^{0,x})_{t\in [0,2T]}$ are here constructed from $(X_t)_{t\in\mathbb R_-}$, the past of the (recurrent Markov) prices process, by following the same line as in Marie \cite{MARIE24}, Remark 2.3. So, the copies-based observation scheme remains appropriate in this context, but requires high-frequency data on a long time-interval.
\end{itemize}
Note that risk bounds on truncated versions of $\widehat F_{\bf m}(t,x)$ and $\widehat P_{\bf m}(x,T)$ may be deduced from the results of Section \ref{section_f_weighted_risk_bound_truncated}, but are omitted since they may not be optimal.
%


%
\section{A projection least squares estimator of the transition density function}\label{section_projection_LS_estimator}
%


%
\subsection{Assumptions on the coefficients of Equation (\ref{main_equation})}\label{section_assumptions}
Throughout the paper, we consider copies $X^1,\dots,X^N$ of the solution $X$ of Equation (\ref{main_equation}) under the condition:
\begin{equation}\label{basic_condition}
x_0\in\mathbb R\textrm{, }
W = (W_t)_{t\in [0,2T]}\textrm{ is a Brownian motion, }
b,\sigma\in C^1(\mathbb R)\textrm{ and }
b',\sigma'\textrm{ are bounded.}
\end{equation}
We also assume that $\sigma$ satisfies the following non-degeneracy condition:
\begin{equation}\label{nondegeneracy_condition}
\exists\alpha,A > 0 :
\forall x\in\mathbb R\textrm{, }
\alpha\leqslant |\sigma(x)|\leqslant\alpha + A.
\end{equation}
As already mentioned, under condition (\ref{basic_condition}), Equation (\ref{main_equation}) has a unique solution. Under the condition (\ref{nondegeneracy_condition}), the transition density function $p_t$ is well-defined. Conditions (\ref{basic_condition}) and (\ref{nondegeneracy_condition}) are assumed to be fulfilled throughout the paper. Some properties of the density $p_t(x_0,.)$ of $X_t$ are provided in the following proposition.
%


%
\begin{proposition}\label{properties_transition}
There exist two positive constants $\mathfrak c_T$ and $\mathfrak m_T$, depending on $T$ but not on $t$, such that for every $x,y\in\mathbb R$,
\begin{equation}\label{Gaussian_bound_density_IC}
p_t(x_0,x)\leqslant
\mathfrak c_Tt^{-\frac{1}{2}}\exp\left(-\mathfrak m_T\frac{(x - x_0)^2}{t}\right).
\end{equation}
Then, the density function
\begin{equation}\label{properties_transition_1}
f(\cdot) :=\frac{1}{T}\int_{0}^{T}p_s(x_0,\cdot)ds
\end{equation}
is well-defined and satisfies the following properties:
\begin{enumerate}
 \item $f$ is upper bounded on $\mathbb R$.
 \item For every compact interval $I\subset\mathbb R$, there exists a positive constant $\underline{\mathfrak m} =\underline{\mathfrak m}(I)$ such that $f(\cdot)\geqslant\underline{\mathfrak m}$ on $I$.
 \item For every $\kappa\in\mathbb R_+$, $|b|^{\kappa}\in\mathbb L^2(\mathbb R,f(x)dx)$.
\end{enumerate}
\end{proposition}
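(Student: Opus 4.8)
The plan is to isolate the Gaussian upper bound (\ref{Gaussian_bound_density_IC}) as the single genuinely analytic ingredient, and then to deduce that $f$ is well-defined together with the three listed properties as essentially elementary consequences. To prove (\ref{Gaussian_bound_density_IC}), the first observation is that, $b'$ being bounded, $b$ is globally Lipschitz and thus of linear growth, $|b(x)|\leqslant|b(x_0)| +\|b'\|_\infty|x - x_0|$; consequently the classical Aronson bounds, which require bounded coefficients, do not apply to $X$ directly. I would therefore remove the drift by a Girsanov change of measure: let $\tilde X$ solve $d\tilde X_t =\sigma(\tilde X_t)dW_t$ with $\tilde X_0 = x_0$. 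Since $\sigma\in C^1$ is bounded and uniformly elliptic by (\ref{nondegeneracy_condition}), the density $\tilde p_t(x_0,\cdot)$ of $\tilde X_t$ satisfies an Aronson-type bound $\tilde p_t(x_0,x)\leqslant C_1t^{-1/2}\exp(-C_2(x - x_0)^2/t)$ with $C_1,C_2$ depending only on $\alpha$ and $A$. Writing $p_t(x_0,x) =\tilde p_t(x_0,x)\,\mathbb E(L_t\mid\tilde X_t = x)$, where $L_t$ is the exponential martingale associated with the drift $b/\sigma$, it remains to control the conditional expectation: its exponent involves $\int_0^t(b/\sigma)(\tilde X_s)dW_s$ and $\int_0^t(b/\sigma)^2(\tilde X_s)ds$, which — because $b$ has only linear growth while $\sigma$ is bounded below — are dominated by the exponential moments of $\sup_{[0,T]}|\tilde X|$, finite over the bounded horizon. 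Absorbing a fraction of the Gaussian factor through Cauchy--Schwarz then yields (\ref{Gaussian_bound_density_IC}) with a reduced constant $\mathfrak m_T < C_2$ and a $T$-dependent $\mathfrak c_T$. I expect this step to be the main obstacle: the linear growth of $b$, and the resulting loss in the Gaussian constant, are precisely what force the dependence of $\mathfrak c_T$ and $\mathfrak m_T$ on $T$.

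Granting (\ref{Gaussian_bound_density_IC}), the remaining points are short. Bounding the exponential by $1$ gives
\[
f(x)\leqslant\frac{\mathfrak c_T}{T}\int_0^T s^{-1/2}\,ds =\frac{2\mathfrak c_T}{\sqrt T},
\]
the singularity $s^{-1/2}$ being integrable at $0$; this shows at once that $f$ is well-defined and establishes property (1), the uniform upper bound on $\mathbb R$. For property (3), linear growth gives $|b(x)|^{2\kappa}\leqslant C(1 + |x|^{2\kappa})$, so it suffices to check that $f$ has all polynomial moments. Inserting (\ref{Gaussian_bound_density_IC}) into $\int_{\mathbb R}|x|^p f(x)dx$, exchanging the $s$- and $x$-integrals by Tonelli, and using that $\int_{\mathbb R}|x|^p\exp(-\mathfrak m_T(x - x_0)^2/s)\,dx$ equals $C_1's^{1/2} + C_2's^{(p+1)/2}$, one sees that the $s^{-1/2}$ weight is exactly compensated and the remaining integral over $[0,T]$ is finite; hence $\int_{\mathbb R}|b|^{2\kappa}f\,dx <\infty$.

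Property (2) is the only point where the upper bound alone does not suffice, and I would prove it by a compactness argument rather than by invoking a matching Gaussian lower bound. Under (\ref{basic_condition})--(\ref{nondegeneracy_condition}), the map $(s,x)\mapsto p_s(x_0,x)$ is continuous and strictly positive on $(0,\infty)\times\mathbb R$: smoothness follows from uniform ellipticity (hypoellipticity, or parabolic regularity), and positivity from the Stroock--Varadhan support theorem. Restricting to the compact set $[T/2,T]\times I$, the positive continuous function $p_s(x_0,x)$ attains a positive minimum $\delta =\delta(I) > 0$, whence for every $x\in I$,
\[
f(x)\geqslant\frac1T\int_{T/2}^T p_s(x_0,x)\,ds\geqslant\frac{\delta}{2} =:\underline{\mathfrak m}(I) > 0,
\]
which completes the derivation of all three properties from (\ref{Gaussian_bound_density_IC}).
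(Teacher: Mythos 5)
Your reductions of properties (1) and (3) to the Gaussian upper bound (\ref{Gaussian_bound_density_IC}) are exactly the computations in the paper, and your treatment of property (2) by continuity, strict positivity and compactness of $[T/2,T]\times I$ is a legitimate alternative to the paper's argument, which instead invokes the matching Gaussian \emph{lower} bound of Menozzi, Pesce and Zhang (Theorem 1.2, Inequality (\ref{Gaussian_lower_bound_density})) centered at the drift flow $\theta_t(x_0)$ and then recenters it at $x_0$; your route trades one nontrivial citation (the quantitative lower bound) for two others (parabolic regularity and a support/irreducibility argument), so it is correct but not more elementary.

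The genuine gap is in your derivation of (\ref{Gaussian_bound_density_IC}) itself. The paper does not prove this bound from scratch: it quotes Menozzi, Pesce and Zhang, Theorem 2.1, which gives an upper bound of the form (\ref{Gaussian_bound_density}) centered at $\theta_t(x)$ precisely for SDEs with unbounded (linear-growth) drift, and then performs a short recentering computation. Your Girsanov sketch does not close. Writing $L_t=\exp\bigl(\int_0^t(b/\sigma)(\tilde X_s)dW_s-\tfrac12\int_0^t(b/\sigma)^2(\tilde X_s)ds\bigr)$, the functional you must exponentiate is \emph{quadratic}, not linear, in $\sup_{[0,T]}|\tilde X-x_0|$: indeed $\int_0^t(b/\sigma)^2(\tilde X_s)ds\lesssim T\|b'\|_\infty^2\alpha^{-2}\sup_{[0,T]}|\tilde X-x_0|^2+C_T$, and after the Cauchy--Schwarz step you need
\begin{displaymath}
\mathbb E\left[\exp\left(c\,T\,\frac{\|b'\|_\infty^2}{\alpha^2}\sup_{s\in[0,T]}|\tilde X_s-x_0|^2\right)\right]<\infty .
\end{displaymath}
Since $\tilde X-x_0$ is a time-changed Brownian motion with clock bounded by $(\alpha+A)^2T$, $\sup_{[0,T]}|\tilde X-x_0|$ has Gaussian tails of variance of order $(\alpha+A)^2T$, so this expectation is finite only when $c\,T\|b'\|_\infty^2/\alpha^2$ is smaller than a constant times $\bigl((\alpha+A)^2T\bigr)^{-1}$ --- a smallness condition on $T\|b'\|_\infty(\alpha+A)/\alpha$ that is not implied by (\ref{nondegeneracy_condition}). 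Moreover, after integrating the stochastic integral by parts via It\^o's formula ($\int_0^t(b/\sigma)(\tilde X_s)dW_s=F(\tilde X_t)-F(x_0)+\dots$ with $F'=b/\sigma^2$ of linear growth, hence $F$ of quadratic growth), the conditional expectation $\mathbb E(L_t\mid\tilde X_t=x)$ carries a factor of order $\exp\bigl(c'\,(x-x_0)^2\bigr)$ with $c'\asymp\|b'\|_\infty/\alpha^2$ \emph{independent of $t$}, which for $t$ of order $T$ can overwhelm the Aronson factor $\exp(-C_2(x-x_0)^2/t)$ rather than merely reduce its constant. So the assertion that the exponent is ``dominated by the exponential moments of $\sup_{[0,T]}|\tilde X|$, finite over the bounded horizon'' is not correct as stated, and this is exactly the difficulty that makes Gaussian bounds for diffusions with unbounded drift a theorem in its own right. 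To repair your argument you would either have to impose smallness/boundedness assumptions the paper does not make, localize in time and iterate, or simply cite the result as the paper does.
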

%


%
\begin{proof}
First, under the conditions (\ref{basic_condition}) and (\ref{nondegeneracy_condition}), by Menozzi {\it et al.} \cite{MPZ21}, Theorem 2.1, there exist two positive constants $\overline{\mathfrak c}_T$ and $\overline{\mathfrak m}_T$, depending on $T$ but not on $t$, such that for every $x,y\in\mathbb R$,
\begin{equation}\label{Gaussian_bound_density}
p_t(x,y)\leqslant
\overline{\mathfrak c}_Tt^{-\frac{1}{2}}
\exp\left(-\overline{\mathfrak m}_T\frac{(y -\theta_t(x))^2}{t}\right),
\end{equation}
where $s\mapsto\theta_s(x)$ is the solution of the differential equation
\begin{displaymath}
z_s = x +\int_{0}^{s}b(z_u)du
\textrm{ $;$ }s\in [0,T].
\end{displaymath}
For any $u\in (0,T]$ and $x\in\mathbb R$,
\begin{displaymath}
|x - x_0|\leqslant
|x -\theta_u(x_0)| + uS_T(x_0)
\quad {\rm with}\quad
S_T(x_0) =\sup_{s\in [0,T]}|b(\theta_s(x_0))|,
\end{displaymath}
leading to
\begin{displaymath}
-\frac{1}{2}(x - x_0)^2 + u^2S_T(x_0)^2\geqslant
-(x -\theta_u(x_0))^2.
\end{displaymath}
Thus, by Inequality (\ref{Gaussian_bound_density}),
\begin{eqnarray*}
 p_t(x_0,x) & \leqslant &
 \mathfrak c_Tt^{-\frac{1}{2}}\exp\left(-\mathfrak m_T\frac{(x - x_0)^2}{t}\right)\\
 \nonumber
 & &
 \hspace{2cm}{\rm with}\quad
 \mathfrak c_T =\overline{\mathfrak c}_Te^{\overline{\mathfrak m}_TTS_{T}(x_0)^2}
 \quad {\rm and}\quad
 \mathfrak m_T =\frac{\overline{\mathfrak m}_T}{2}.
\end{eqnarray*}
In particular, $t\mapsto p_t(x_0,x)$ belongs to $\mathbb L^1([0,T])$, which legitimates to consider the density function $f$ defined by (\ref{properties_transition_1}). Still by Inequality (\ref{Gaussian_bound_density_IC}):
\begin{itemize}
 \item $f$ is upper bounded on $\mathbb R$. Indeed, for every $x\in\mathbb R$,
 \begin{displaymath}
 f(x)\leqslant
 \frac{1}{T}\int_{0}^{T}\mathfrak c_Ts^{-\frac{1}{2}}ds =
 2\mathfrak c_TT^{-\frac{1}{2}}.
 \end{displaymath}
 \item Since $b'$ is bounded (and then $b$ has linear growth),
 \begin{displaymath}
 |b|^{\kappa}\in\mathbb L^2(\mathbb R,f(x)dx)
 \textrm{ $;$ }
 \forall\kappa\in\mathbb R_+.
 \end{displaymath}
 Indeed, for every $\upsilon\in\mathbb R_+$ and $t\in (0,T]$,
 \begin{eqnarray*}
  \mathbb E(|X_t|^{\upsilon})
  & = &
  \int_{-\infty}^{\infty}|x|^{\upsilon}p_t(x_0,x)dx\\
  & \leqslant &
  t^{-\frac{1}{2}}\underbrace{
  \mathfrak c_T\int_{-\infty}^{\infty}|x|^{\upsilon}
  \exp\left(-\mathfrak m_T\frac{(x - x_0)^2}{T}\right)dx}_{=:\mathfrak c_{T,\upsilon} <\infty}
 \end{eqnarray*}
 and then, for every $\kappa\in\mathbb R_+$,
 \begin{eqnarray*}
  \int_{-\infty}^{\infty}|b(x)|^{2\kappa}f(x)dx & = &
  \frac{1}{T}\int_{0}^{T}\mathbb E(|b(X_s)|^{2\kappa})ds\\
  & \leqslant &
  \mathfrak c_1\left(1 +\frac{1}{T}\int_{0}^{T}\mathbb E(|X_s|^{2\kappa})ds\right)
  \leqslant\mathfrak c_1(1 + 2\mathfrak c_{T,2\kappa}T^{-\frac{1}{2}}) <\infty,
 \end{eqnarray*}
 where $\mathfrak c_1$ is a positive constant depending only on $b$ and $\kappa$.
\end{itemize}
Now, Menozzi {\it et al.} \cite{MPZ21}, Theorem 1.2 also says that there exist two positive constants $\underline{\mathfrak c}_T$ and $\underline{\mathfrak m}_T$, depending on $T$ but not on $t$, such that for every $x,y\in\mathbb R$,
\begin{equation}\label{Gaussian_lower_bound_density}
p_t(x,y)\geqslant
\underline{\mathfrak c}_Tt^{-\frac{1}{2}}
\exp\left(-\underline{\mathfrak m}_T\frac{(y -\theta_t(x))^2}{t}\right).
\end{equation}
For any $u\in (0,T]$ and $x\in\mathbb R$,
\begin{displaymath}
|x -\theta_u(x_0)|\leqslant
|x - x_0| + uS_T(x_0),
\end{displaymath}
leading to
\begin{displaymath}
-(x -\theta_u(x_0))^2\geqslant
-2(x - x_0)^2 - 2u^2S_T(x_0)^2.
\end{displaymath}
So, by Inequality (\ref{Gaussian_lower_bound_density}),
\begin{eqnarray*}
 p_t(x_0,x)
 & \geqslant &
 \mathfrak c_{T}^{*}t^{-\frac{1}{2}}
 \exp\left(-\mathfrak m_{T}^{*}\frac{(x - x_0)^2}{t}\right)\\
 & &
 \hspace{2cm}{\rm with}\quad
 \mathfrak c_{T}^{*} =\underline{\mathfrak c}_Te^{-2\underline{\mathfrak m}_TTS_{T}(x_0)^2}
 \quad {\rm and}\quad
 \mathfrak m_{T}^{*} = 2\underline{\mathfrak m}_T
\end{eqnarray*}
and then, for every compact interval $I\subset\mathbb R$ and $x\in I$,
\begin{eqnarray*}
 f(x) & \geqslant &
 \frac{\mathfrak c_{T}^{*}}{T}\int_{T/2}^{T}s^{-\frac{1}{2}}\exp\left(-\frac{
 \mathfrak m_{T}^{*}}{s}(x - x_0)^2\right)ds
 \nonumber\\
 & \geqslant &
 \underline{\mathfrak m}(I) :=
 \frac{\mathfrak c_{T}^{*}}{T}
 \exp\left(-\frac{2\mathfrak m_{T}^{*}}{T}
 \max\{(\max(I) - x_0)^2,(\min(I) - x_0)^2\}\right)
 \int_{T/2}^{T}s^{-\frac{1}{2}}ds > 0.
\end{eqnarray*}
This concludes the proof.
\end{proof}
%


%
\subsection{The projection least squares estimator and some related definitions}
The objective function $\gamma_N$ is defined by
\begin{displaymath}
\gamma_N(\tau) :=
\frac{1}{NT}\sum_{i = 1}^{N}\left(\int_{0}^{T}\int_{-\infty}^{\infty}\tau(X_{s}^{i},y)^2dyds -
2\int_{0}^{T}\tau(X_{s}^{i},X_{s + t}^{i})ds\right)
\textrm{ $;$ }\tau\in\mathcal S_{\bf m}.
\end{displaymath}
In order to explain the relevance of the criterion, let us compute its expectation:
\begin{displaymath}
\mathbb E(\gamma_N(\tau)) =
\int_{-\infty}^{\infty}\int_{-\infty}^{\infty}
(\tau(x,y) - p_t(x,y))^2f(x)dxdy -
\int_{-\infty}^{\infty}\int_{-\infty}^{\infty}
p_t(x,y)^2f(x)dxdy.
\end{displaymath}
This shows that, the closer $\tau$ is to $p_t$, the smaller $\mathbb E(\gamma_N(\tau))$, and that $\gamma_N(\tau)$ is the empirical version of the $f$-weighted $\mathbb L^2$-distance between $\tau$ and $p_t$. This is the reason why we define the estimator of $p_t$ as the minimizer of $\gamma_N$ in $\mathcal S_{\bf m}$.
\\
\\
{\bf Remark.} Note that any $\tau\in\mathcal S_{\bf m}$ can be written $\tau =\sum_{j,\ell}\Theta_{j,\ell}(\varphi_j\otimes\psi_{\ell})$ with $\Theta\in\mathcal M_{m_1,m_2}(\mathbb R)$. Then, $\tau$ is characterized by the matrix $\Theta$.
\\
\\
First, let us study the conditions ensuring that $\gamma_N$ has a unique minimizer in $\mathcal S_{\bf m}$. For $\tau =\sum_{j,\ell}\Theta_{j,\ell}(\varphi_j\otimes\psi_{\ell})$ with $\Theta\in\mathcal M_{m_1,m_2}(\mathbb R)$,
\begin{displaymath}
\nabla_{\tau}\gamma_N(\tau) =
2(\widehat\Psi_{m_1}\Theta -\widehat Z_{\mathbf m,t}),
\end{displaymath}
where
\begin{equation}\label{empirical_Psi_matrix}
\widehat\Psi_{m_1} :=\left(
\frac{1}{NT}\sum_{i = 1}^{N}\int_{0}^{T}\varphi_j(X_{s}^{i})\varphi_{j'}(X_{s}^{i})ds
\right)_{j,j'\in\{1,\dots,m_1\}}
\end{equation}
and
\begin{equation}\label{Z_vector}
\widehat Z_{\mathbf m,t} :=
\left(\frac{1}{NT}\sum_{i = 1}^{N}\int_{0}^{T}\varphi_j(X_{s}^{i})\psi_{\ell}(X_{s + t}^{i})ds
\right)_{(j,\ell)\in\{1,\dots,m_1\}\times\{1,\dots,m_2\}}.
\end{equation}
The symmetric matrix $\widehat\Psi_{m_1}$ is positive semidefinite because for any $y\in\mathbb R^{m_1}$,
\begin{displaymath}
y^*\widehat\Psi_{m_1}y =
\frac{1}{NT}\sum_{i = 1}^{N}\int_{0}^{T}
\left(\sum_{j = 1}^{m_1}y_j\varphi_j(X_{s}^{i})\right)^2ds\geqslant 0.
\end{displaymath}
If in addition $\widehat{\Psi}_{m_1}$ is invertible, it is positive definite, and then
\begin{equation}\label{definition_projection_LS}
\widehat p_{\mathbf m,t} =\sum_{j = 1}^{m_1}\sum_{\ell = 1}^{m_2}
[\widehat\Theta_{\mathbf m,t}]_{j,\ell}(\varphi_j\otimes\psi_{\ell})
\quad {\rm with}\quad
\widehat\Theta_{\mathbf m,t} =\widehat\Psi_{m_1}^{-1}\widehat Z_{\mathbf m,t}
\end{equation}
is the only minimizer of $\gamma_N$ in $\mathcal S_{\bf m}$ called the projection least squares estimator of $p_t$.
\\
\\
Now, let us consider the theoretical counterpart $\Psi_{m_1} :=\mathbb E(\widehat\Psi_{m_1})$ of $\widehat\Psi_{m_1}$ (see Equality (\ref{empirical_Psi_matrix})), the empirical inner product $\langle .,.\rangle_N$ such that
\begin{displaymath}
\langle\tau,\tau^{\star} \rangle_N :=
\frac{1}{NT}\sum_{i = 1}^{N}
\int_{0}^{T}\int_{-\infty}^{\infty}
\tau(X_{s}^{i},y)\tau^{\star}(X_{s}^{i},y)dyds
\textrm{ $;$ }
\forall\tau,\tau^{\star}\in\mathcal S_{\bf m},
\end{displaymath}
and the $f$-weighted inner product $\langle .,.\rangle_f$ such that
\begin{displaymath}
\langle\tau,\tau^{\star}\rangle_f :=
\mathbb E(\langle\tau,\tau^{\star}\rangle_N) =
\int_{-\infty}^{\infty}\int_{-\infty}^{\infty}
\tau(x,y)\tau^{\star}(x,y)f(x)dxdy
\textrm{ $;$ }
\forall\tau,\tau^{\star}\in\mathcal S_{\bf m}.
\end{displaymath}
The empirical norm associated to $\langle .,.\rangle_N$ (resp. $\langle .,.\rangle_f$) is denoted by $\|.\|_N$ (resp. $\|.\|_f$). For any $\tau\in\mathcal S_{\bf m}$,
\begin{displaymath}
\gamma_N(\tau) =\|\tau\|_{N}^{2} -
\frac{2}{NT}\sum_{i = 1}^{N}\int_{0}^{T}
\tau(X_{s}^{i},X_{s + t}^{i})ds
\end{displaymath}
and then, interestingly,
\begin{displaymath}
\mathbb E(\gamma_N(\tau)) =
\|\tau\|_{f}^{2} - 2\langle\tau,p_t\rangle_f,
\end{displaymath}
justifying again the choice of the objective function $\gamma_N$ to define our estimator $\widehat p_{\mathbf m,t}$ of $p_t$. It is standard in nonparametric regression-type settings to have to handle a random norm depending on the observations, and to control the proximity with its expectation thanks to deviation inequalities.
%


%
\subsection{Examples of bases}\label{section_bases}
In addition to the empirical norm, related to the data, and its theoretical counterpart - the $f$-weighted $\mathbb L^2$-norm $\|.\|_f$ -, the usual $\mathbb L^2$-norm $\|.\|$ defined at the end of the introduction section appears through the bases and their characteristics.
\\
\\
First, let $u = (u_j)_{j\geqslant 1}$ be an orthonormal $I$-supported family of $\mathbb L^2(\mathbb R,dx)$ (equipped with $\langle .,.\rangle$), and consider $\mathcal S_{u,m} := {\rm span}\{u_1,\dots,u_m\}$; the linear space generated by the $m\in\mathbb N^*$ first functions of the family $u$. Consider also
\begin{displaymath}
\mathfrak L_u(m) :=
\sup_{x\in I}\sum_{j = 1}^{m}u_j(x)^2.
\end{displaymath}
One may easily check that
\begin{displaymath}
\mathfrak L_u(m) =
\sup_{\tau\in\mathcal S_{u,m} :\|\tau\| = 1}\|\tau\|_{\infty}
\quad\textrm{(see Lemma 1 in Big\'e and Massart \cite{BM98}, p. 337).}
\end{displaymath}
So, $\mathfrak L_u(m)$ measures the link between the $\mathbb L^2$ and $\mathbb L^{\infty}$ norms in the function space $\mathcal S_{u,m}$: when $I = [0,1]$,
\begin{displaymath}
\|\tau\|^2\leqslant
\|\tau\|_{\infty}^{2}\leqslant
\mathfrak L(m)\|\tau\|^2
\textrm{ $;$ }
\forall\tau\in\mathcal S_{u,m}.
\end{displaymath}
Now, let us provide examples of usual bases $u$, and controls of $\mathfrak L_u(m)$ for each of them:
\begin{itemize}
 \item The trigonometric basis is defined on $I = [0,1]$ by
 \begin{displaymath}
 {\rm trig}_1(x) := 1,\quad
 {\rm trig}_{2j}(x) :=\sqrt 2\cos(2\pi jx),\quad
 {\rm trig}_{2j + 1}(x) :=\sin(2\pi jx)
 \end{displaymath}
 for $j = 1,\dots,(m - 1)/2$ with $m\in\mathbb N\backslash (2\mathbb N)$. Since $\cos(\cdot)^2 +\sin(\cdot)^2 = 1$, $\mathfrak L_{\rm trig}(m) = m$. In the same way, $\mathfrak L_u(m)$ is of order $m$ for splines, wavelets, or any orthonormal bounded bases.
 \item For the Legendre basis $(\ell_j)_{j\geqslant 1}$, defined via normalized Legendre's polynomials on $I = [-1,1]$ (see Tsybakov \cite{TSYBAKOV09}, Section 1.2.2), $\mathfrak L_{\ell}(m)\lesssim m^{2}$ as established in Cohen {\it et al.} \cite{CDL13} (p. 831).
 \item Finally, let us focus on the Hermite basis $(h_j)_{j\in\mathbb N}$, defined on $I =\mathbb R$ by
 \begin{equation}\label{Hermite_basis}
 h_j(x) :=\mathfrak c_jH_j(x)e^{-\frac{x^2}{2}}
 \quad {\rm with}\quad
 \mathfrak c_j = (2^jj!\sqrt\pi)^{-\frac{1}{2}}
 \quad {\rm and}\quad
 H_j(x) = (-1)^je^{x^2}\frac{d^j}{dx^j}e^{-x^2}
 \end{equation}
 for every $x\in\mathbb R$ and $j\in\mathbb N$. The sequence $(h_j)_{j\in\mathbb N}$ is an orthonormal family of $\mathbb L^2(\mathbb R,dx)$. Moreover, the $h_j$'s are bounded by $\pi^{-1/4}$, and by Lemma 1 in Comte and Lacour \cite{CL23}, there exists a constant $\mathfrak c_h > 0$ such that
 \begin{equation}\label{L_Hermite_basis}
 \mathfrak L_h(m)\leqslant\mathfrak c_h\sqrt m.
 \end{equation}
\end{itemize}
Without loss of generality, in the sequel, we assume that $\mathfrak L_u(m)\geqslant 1$. Otherwise, $\mathfrak L_u(m)$ can be replaced by $1\vee\mathfrak L_u(m)$.
%


%
\section{Risk bounds on the projection least squares estimator}\label{section_risk_bounds}
%


%
\subsection{Risk bound with respect to the empirical norm}\label{section_empirical_risk_bound}
This subsection deals with a nonadaptive risk bound, with respect to the empirical norm $\|.\|_N$, on our estimator $\widehat p_{\mathbf m,t}$ of $p_t$, built in $\mathcal S_{\bf m} =\mathcal S_{\varphi,m_1}\otimes\mathcal S_{\psi,m_2}$. In the sequel, $\mathbf m = (m_1,m_2)$ fulfills the following assumption.
%


%
\begin{assumption}\label{condition_m}
There exist two constants $\mathfrak c_{\ref{condition_m}} > 0$ and $q\in\mathbb N^*$, not depending on $\mathbf m$ and $N$, such that
\begin{equation}\label{weak_limit_psi}
\mathfrak L_{\psi}(m_2)\leqslant\mathfrak c_{\ref{condition_m}}N^q.
\end{equation}
Moreover,
\begin{equation}\label{stability_condition}
\mathfrak L_{\varphi}(m_1)(\|\Psi_{m_1}^{-1}\|_{\rm op}\vee 1)
\leqslant\frac{\mathfrak c_{\Lambda}}{2}\cdot\frac{NT}{\log(NT)}
\end{equation}
with
\begin{displaymath}
\mathfrak c_{\Lambda} =\frac{1 -\log(2)}{(1 + p)T}
\quad\textrm{and}\quad
p = 2(q + 4) + 1.
\end{displaymath}
\end{assumption}
\noindent
The first part (\ref{weak_limit_psi}) of Assumption \ref{condition_m} is a weak limit on the maximal dimension $m_2$ that can be considered (see Section \ref{section_bases}). The second part (\ref{stability_condition}) of Assumption \ref{condition_m} is a generalization of the so-called {\it stability condition} introduced in the nonparametric regression framework in Cohen {\it et al.} \cite{CDL13}, and already extended to the context of independent copies of continuous diffusion processes framework in Comte and Genon-Catalot \cite{CGC20}. In fact, (\ref{stability_condition}) is in turn a limit on the maximal dimension $m_1$ that can be considered. It can be stronger than $m_1\leqslant NT$, but to what extent? First, let us answer this question when $I$ is a compact interval. By Proposition \ref{properties_transition}.(2), there exists $\underline{\mathfrak m} > 0$ such that $f(\cdot)\geqslant\underline{\mathfrak m}$ on $I$, and then
\begin{eqnarray*}
 \|\Psi_{m_1}^{-1}\|_{\rm op} & = &
 \frac{1}{\lambda_{\min}(\Psi_{m_1})} =
 \left(\inf_{\theta :\|\theta\|_{2,m_1} = 1}\sum_{j,j' = 1}^{m_1}\theta_j\theta_{j'}[\Psi_{m_1}]_{j,j'}\right)^{-1}\\
 & = &
 \left[\inf_{\theta :\|\theta\|_{2,m_1} = 1}\int_I\left(
 \sum_{j = 1}^{m_1}\theta_j\varphi_j(x)\right)^2f(x)dx\right]^{-1}\leqslant\frac{1}{\underline{\mathfrak m}}
 \quad\textrm{ for every $m_1\in\{1,\dots,N_T\}$.}
\end{eqnarray*}
Moreover, assume that there exists a constant $\mathfrak c_{\varphi} > 0$, not depending on $m_1$ and $N$, such that $\mathfrak L_{\varphi}(m_1)\leqslant\mathfrak c_{\varphi}^{2}m_1$. For instance, all bounded bases satisfy this condition. Then, (\ref{stability_condition}) is fulfilled by any
\begin{displaymath}
m_1\leqslant\frac{\mathfrak c_{\Lambda}}
{2\mathfrak c_{\varphi}^{2}(\underline{\mathfrak m}^{-1}\vee 1)}
\cdot\frac{NT}{\log(NT)}.
\end{displaymath}
In other words, when $I$ is compact and the $\varphi_j$'s are bounded (i.e. $\max_j\|\varphi_j\|_{\infty}\leqslant\Phi$ with $\Phi > 0$), $m_1$ needs to be of order $NT/\log(NT)$, which remains a weak condition. Now, when $I$ is a non-compact interval, the set of possible values for $m_1$ becomes smaller. For instance, for Hermite's basis, it holds that $\|\Psi_{m_1}^{-1}\|_{\rm op}\gtrsim\sqrt{m_1}$ by Comte and Genon-Catalot \cite{CGC19}, Proposition 8, which necessarily reduces the collection of authorized values for $m_1$.
\\
\\
Under Assumption \ref{condition_m}, we can prove the following risk bound.
%


%
\begin{theorem}\label{empirical_risk_bound}
Consider $p_{I\times J,t} := (p_t)_{|I\times J}$. Under Assumption \ref{condition_m}, there exists a constant $\mathfrak c_{\ref{empirical_risk_bound}} > 0$, not depending on $\mathbf m$ and $N$, such that for every $t\in [0,T]$,
\begin{equation}\label{empirical_risk_bound_1}
\mathbb E(\|\widehat p_{\mathbf m,t} - p_{I\times J,t}\|_{N}^{2})
\leqslant
\min_{\tau\in\mathcal S_{\bf m}}\|\tau - p_{I\times J,t}\|_{f}^{2} +
\frac{2m_1\mathfrak L_{\psi}(m_2)}{N} +
\frac{\mathfrak c_{\ref{empirical_risk_bound}}}{N}.
\end{equation}
\end{theorem}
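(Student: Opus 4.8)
The plan is to reduce the statement to an exact algebraic decomposition of the empirical risk into an empirical bias term and a variance term built from a centered noise matrix, to control the bias in expectation by the announced $f$-weighted bias, and to compute the expectation of the variance term by exploiting the Markov structure of the copies.

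Write $g:=p_{I\times J,t}$ and, for $\tau\in\mathcal S_{\bf m}$, introduce the centered empirical process
$$\nu_N(\tau):=\frac{1}{NT}\sum_{i=1}^N\int_0^T\Big(\tau(X_s^i,X_{s+t}^i)-\int_{-\infty}^{\infty}\tau(X_s^i,y)p_t(X_s^i,y)\,dy\Big)ds,$$
which has mean zero and, since $\tau$ is supported in $I\times J$, satisfies $\frac{1}{NT}\sum_i\int_0^T\tau(X_s^i,X_{s+t}^i)\,ds=\langle\tau,g\rangle_N+\nu_N(\tau)$; hence $\gamma_N(\tau)+\|g\|_N^2=\|\tau-g\|_N^2-2\nu_N(\tau)$. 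Setting $E:=(\nu_N(\varphi_j\otimes\psi_\ell))_{j,\ell}=\widehat Z_{\mathbf m,t}-\bar B$ with $\bar B_{j,\ell}:=\langle g,\varphi_j\otimes\psi_\ell\rangle_N$, the normal equations show that the $\|.\|_N$-projection $\widehat g$ of $g$ onto $\mathcal S_{\bf m}$ has coefficient matrix $\widehat\Psi_{m_1}^{-1}\bar B$, while by (\ref{definition_projection_LS}) $\widehat p_{\mathbf m,t}$ has matrix $\widehat\Psi_{m_1}^{-1}\widehat Z_{\mathbf m,t}$. On the event where $\widehat\Psi_{m_1}$ is invertible, $\widehat p_{\mathbf m,t}-\widehat g$ therefore has matrix $\widehat\Psi_{m_1}^{-1}E$, and the $\|.\|_N$-Pythagorean identity gives the exact decomposition
$$\|\widehat p_{\mathbf m,t}-g\|_N^2=\min_{\tau\in\mathcal S_{\bf m}}\|\tau-g\|_N^2+\mathrm{Tr}(E^*\widehat\Psi_{m_1}^{-1}E).$$

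For the first term I would bound $\min_\tau\|\tau-g\|_N^2\leq\|\tau_{\bf m}-g\|_N^2$, where $\tau_{\bf m}$ is the fixed $\langle.,.\rangle_f$-projection of $g$ onto $\mathcal S_{\bf m}$; since $\mathbb E\|\tau_{\bf m}-g\|_N^2=\|\tau_{\bf m}-g\|_f^2=\min_\tau\|\tau-g\|_f^2$, this produces the announced bias term with constant one. The core of the proof is the variance term. Replacing $\widehat\Psi_{m_1}^{-1}$ by $\Psi_{m_1}^{-1}$ (legitimate on a high-probability stability event, see below), I set $\Phi(s):=(\varphi_j(X_s))_j$ and $W_\ell(s):=\psi_\ell(X_{s+t})-\int\psi_\ell(y)p_t(X_s,y)\,dy$, and write, using that the $N$ copies are i.i.d.,
$$\mathbb E\big(\mathrm{Tr}(E^*\Psi_{m_1}^{-1}E)\big)=\frac{1}{NT^2}\int_0^T\!\!\int_0^T\mathbb E\Big[\Phi(s)^*\Psi_{m_1}^{-1}\Phi(u)\sum_{\ell=1}^{m_2}W_\ell(s)W_\ell(u)\Big]ds\,du.$$
The Markov property is decisive: if $|s-u|\geq t$, say $u\leq s$, then $u+t\leq s$, so $\Phi(s),\Phi(u),W_\ell(u)$ are all $\mathcal F_s$-measurable (with $\mathcal F_s$ the natural filtration of $X$) whereas $\mathbb E(W_\ell(s)\mid\mathcal F_s)=0$, and the inner expectation vanishes. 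On the remaining band $|s-u|<t$, Cauchy--Schwarz applied to the two inner products $a\mapsto a^*\Psi_{m_1}^{-1}b$ and $(s,u)\mapsto\sum_\ell W_\ell(s)W_\ell(u)$, combined with the conditional bound $\mathbb E\big(\sum_\ell W_\ell(s)^2\mid\mathcal F_s\big)\leq\mathfrak L_{\psi}(m_2)$ and the identity $\frac1T\int_0^T\mathbb E(\Phi(s)\Phi(s)^*)\,ds=\Psi_{m_1}$ (whence $\frac1T\int_0^T\mathbb E(\Phi(s)^*\Psi_{m_1}^{-1}\Phi(s))\,ds=m_1$), reduces the integral to $a(s):=\mathbb E(\Phi(s)^*\Psi_{m_1}^{-1}\Phi(s))$ integrated over a band of width $2t$; this yields $\mathbb E(\mathrm{Tr}(E^*\Psi_{m_1}^{-1}E))\leq\frac{2t}{T}\cdot\frac{m_1\mathfrak L_{\psi}(m_2)}{N}\leq\frac{2m_1\mathfrak L_{\psi}(m_2)}{N}$, the announced variance term.

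The main obstacle is the passage from $\widehat\Psi_{m_1}^{-1}$ to $\Psi_{m_1}^{-1}$ and the event where $\widehat\Psi_{m_1}$ fails to be invertible. I would introduce a stability event $\Omega_N$ on which $\widehat\Psi_{m_1}$ is comparable to $\Psi_{m_1}$, obtained from a Bernstein-type concentration inequality for the integrated functionals $\frac{1}{NT}\sum_i\int_0^T\varphi_j(X_s^i)\varphi_{j'}(X_s^i)\,ds$ defining $\widehat\Psi_{m_1}$ in (\ref{empirical_Psi_matrix}); the stability condition (\ref{stability_condition}) is precisely what forces $\mathbb P(\Omega_N^c)$ to be small. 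On $\Omega_N$ one substitutes $\Psi_{m_1}^{-1}$ for $\widehat\Psi_{m_1}^{-1}$ at the price of a correction which, together with the contribution of $\Omega_N^c$ (where $\widehat p_{\mathbf m,t}$ is set to $0$ and the risk is bounded by $\mathbb E(\|g\|_N^4)^{1/2}\mathbb P(\Omega_N^c)^{1/2}$), must be shown to be of order $1/N$ \emph{uniformly} in $\mathbf m$. This is exactly where the weak limit (\ref{weak_limit_psi}) on $\mathfrak L_\psi(m_2)$, the choice $p=2(q+4)+1$, and the moment bounds of Proposition \ref{properties_transition} (guaranteeing the finiteness of $\mathbb E(\|g\|_N^4)$ and of the moments of $X$) are combined, so that $\mathbb P(\Omega_N^c)\lesssim N^{-p}$ multiplied by polynomially growing quantities remains $O(1/N)$. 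All these lower-order contributions are gathered into the constant $\mathfrak c_{\ref{empirical_risk_bound}}$.
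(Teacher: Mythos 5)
Your proposal follows the same architecture as the paper's proof: the exact empirical Pythagorean decomposition (your $\mathrm{Tr}(E^*\widehat\Psi_{m_1}^{-1}E)$ is precisely the paper's $\|\widehat p_{\mathbf m,t}-\widehat\Pi_{\bf m}(p_t)\|_N^2=\sup_{\tau:\|\tau\|_N=1}\nu_N(\tau)^2$ from Lemma \ref{empirical_risk_bound_key_lemma}), the bound of the empirical bias by the $f$-weighted bias in expectation, a stability event on which $\widehat\Psi_{m_1}$ is comparable to $\Psi_{m_1}$ (the paper's $\Omega_{m_1}$, with $\mathbb P(\Omega_{m_1}^c)\lesssim N^{-p}$ from Lemma \ref{Omega_event}), and a complement event absorbed into $\mathfrak c_{\ref{empirical_risk_bound}}/N$. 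Where you genuinely differ is the variance computation: the paper passes to an $f$-orthonormalized basis $(\varphi_j^f\otimes\psi_\ell)$, bounds the supremum over the deterministic ball $\{\|\tau\|_f^2\leqslant 2\}$ by $2\sum_{j,\ell}\nu_N(\varphi_j^f\otimes\psi_\ell)^2$, and controls each variance termwise via Jensen's inequality in time; you instead compute $\mathbb E(\mathrm{Tr}(E^*\Psi_{m_1}^{-1}E))$ directly and invoke the Markov property to annihilate the off-diagonal time contributions with $|s-u|\geqslant t$, which yields the slightly sharper factor $2t/T$. Both arguments use independence across copies in the same way to get the $1/N$, and both land on the announced order $m_1\mathfrak L_\psi(m_2)/N$; your band argument is a genuine (if modest) refinement.

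Two points need repair. First, the substitution of $\Psi_{m_1}^{-1}$ for $\widehat\Psi_{m_1}^{-1}$ on the stability event is not an additive $O(1/N)$ correction, as your wording suggests: on $\Omega_{m_1}$ one only has $\widehat\Psi_{m_1}\succeq\frac12\Psi_{m_1}$, hence $\mathrm{Tr}(E^*\widehat\Psi_{m_1}^{-1}E)\leqslant 2\,\mathrm{Tr}(E^*\Psi_{m_1}^{-1}E)$ --- a multiplicative loss. An additive $O(1/N)$ error would require $\|\widehat\Psi_{m_1}^{-1}-\Psi_{m_1}^{-1}\|_{\rm op}$ to be of order $(m_1\mathfrak L_\psi(m_2))^{-1}$ times $\|\Psi_{m_1}^{-1}\|_{\rm op}$, which the concentration behind (\ref{stability_condition}) does not give and which is not needed. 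Combined with your $2t/T$, the multiplicative loss produces a constant up to $4$ in front of $m_1\mathfrak L_\psi(m_2)/N$ when $t$ is close to $T$, so the constant $2$ you announce does not survive the substitution as written (harmless for the order, but you should state the constant you actually obtain). Second, Theorem \ref{empirical_risk_bound} concerns $\widehat p_{\mathbf m,t}$ itself, not a version truncated to $0$ outside a good event (that is $\widetilde p_{\mathbf m,t}$ in Theorem \ref{f_weighted_risk_bound_truncated}), so on $\Omega_{m_1}^c$ you cannot ``set $\widehat p_{\mathbf m,t}$ to $0$'' and bound the risk by $\mathbb E(\|g\|_N^4)^{1/2}\mathbb P(\Omega_N^c)^{1/2}$. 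The correct fix is the paper's: the variance term admits the deterministic pathwise bound $\sup_{\tau:\|\tau\|_N=1}\nu_N(\tau)^2\leqslant 4m_1\mathfrak L_\psi(m_2)$ (expand in an $\langle .,.\rangle_N$-orthonormal basis and apply Cauchy--Schwarz), and $4m_1\mathfrak L_\psi(m_2)\,\mathbb P(\Omega_{m_1}^c)\lesssim N^{q+1}N^{-p}=O(1/N)$ by the choice $p=2(q+4)+1$ in Assumption \ref{condition_m}.
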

\noindent
We emphasize that the risk bound in Theorem \ref{empirical_risk_bound} is sharp since the constant in front of the bias term $\min_{\tau\in\mathcal S_{\bf m}}\|\tau - p_{I\times J,t}\|_{f}^{2}$ in Inequality (\ref{empirical_risk_bound_1}) is $1$, and the constant 2 in the variance term $2m_1\mathfrak L_{\psi}(m_2)/N$ may be $1 +\varepsilon$ for $\varepsilon > 0$, up to additional technicalities.
\\
\\
\textbf{Sketch of proof.} Let us provide a sketch of the proof of Theorem \ref{empirical_risk_bound}, which is detailed in Subsection \ref{section_proof_empirical_risk_bound}. For every function $h$ from $\mathbb R^2$ into $\mathbb R$, let
\begin{displaymath}
\widehat\Pi_{\bf m}(h) :=
\arg\min_{\tau\in\mathcal S_{\bf m}}
\|\tau - h\|_{N}^{2}
\quad\textrm{be its empirical projection on $\mathcal S_{\bf m}$.}
\end{displaymath}
Then,
\begin{eqnarray*}
 \mathbb E(\|\widehat p_{\mathbf m,t} - p_{I\times J,t}\|_{N}^{2})
 & = &
 \mathbb E\left(\min_{\tau\in\mathcal S_{\bf m}}
 \|\tau - p_{I\times J,t}\|_{N}^{2}\right) +
 \mathbb E(\|\widehat p_{\mathbf m,t} -\widehat\Pi_{\bf m}(p_t)\|_{N}^{2}\\
 & \leqslant &
 \min_{\tau\in\mathcal S_{\bf m}}\|\tau - p_{I\times J,t}\|_{f}^{2} +
 \mathbb E(\|\widehat p_{\mathbf m,t} -\widehat\Pi_{\bf m}(p_t)\|_{N}^{2}).
\end{eqnarray*}
The first term is the expected squared bias term, and
\begin{eqnarray*}
 \mathbb E(\|\widehat p_{\mathbf m,t} -\widehat\Pi_{\bf m}(p_t)\|_{N}^{2}) & = &
 \mathbb E(\|\widehat p_{\mathbf m,t} -\widehat\Pi_{\bf m}(p_t)\|_{N}^{2}\mathbf 1_{\Omega_{m_1}})\\
 & &
 \hspace{2cm} +
 \mathbb E(\|\widehat p_{\mathbf m,t} -
 \widehat\Pi_{\bf m}(p_t)\|_{N}^{2}\mathbf 1_{\Omega_{m_1}^{c}}) =: A + B,
\end{eqnarray*}
where $\Omega_{m_1}$ is the event on which, for every $\mathbf x\in\mathbb R^{m_1}$, $\mathbf x^*\widehat\Psi_{m_1}\mathbf x\leqslant 2\mathbf x^*\Psi_{m_1}\mathbf x$. This means that on $\Omega_{m_1}$, $\widehat\Psi_{m_1}$ can be replaced by its expectation up to a factor two in controls. In a first step, $A$ is controlled by $m_1\mathfrak L_{\psi}(m_2)/N$, the main variance contribution, by using that
\begin{equation}\label{empirical_risk_bound_3}
\|\widehat p_{\mathbf m,t} -\widehat\Pi_{\bf m}(p_t)\|_{N}^{2} =
\sup_{\tau\in\mathcal S_{\bf m} :\|\tau\|_N = 1}\left[
\frac{1}{NT}\sum_{i = 1}^{N}\int_{0}^{T}\left(
\tau(X_{s}^{i},X_{s + t}^{i}) -\int_{-\infty}^{\infty}\tau(X_{s}^{i},y)p_t(X_{s}^{i},y)dy\right)ds\right]^2,
\end{equation}
and by bounding its expectation thanks to the definition of $\Omega_{m_1}$. Indeed, on $\Omega_{m_1}$, $\|\tau\|_{N}^{2}\leqslant 1$ implies that $\|\tau\|_{f}^{2}\leqslant 2$, and then the random unit ball on which the supremum is taken can be replaced by a deterministic one. In a second step, we prove that $B = O(1/N)$ thanks to Comte and Genon-Catalot \cite{CGC20}, Lemma 6.1, which says that $\mathbb P(\Omega_{m_1}^{c})\lesssim 1/N^p$.
\\
\\
{\bf Remark.} Let us discuss the order of the variance term in Inequality (\ref{empirical_risk_bound_1}) for the bases presented in Section \ref{section_bases}. Obviously, the choice of $\varphi$ does not matter as it leads in any case to the factor $m_1$. So, we discuss the choice of $\psi$, using the results of Section \ref{section_bases}. First, for $\psi$ chosen as splines, wavelets or trigonometric bases, $\mathfrak L_{\psi}(m_2)\lesssim m_2$, and then the variance term in the risk bound on $\widehat p_{\mathbf m,t}$ is of order $m_1m_2/N$ as for the usual projection estimator of a 2-dimensional density function. Now, for Legendre's basis, $\mathfrak L_{\psi}(m_2)\lesssim m_{2}^{2}$, leading to a variance term of order $m_1m_{2}^{2}/N$ in Inequality (\ref{empirical_risk_bound_1}). Finally, for the Hermite basis, it is worth noting that, by (\ref{L_Hermite_basis}), the variance term in Inequality (\ref{empirical_risk_bound_1}) is of order $m_1\sqrt m_2/N$.
%


%
\subsection{Risk bound with respect to the $f$-weighted norm, on a truncated version of the projection least squares estimator}\label{section_f_weighted_risk_bound_truncated}
In Theorem \ref{empirical_risk_bound}, the risk of $\widehat p_{\mathbf m,t}$ is related to the empirical norm $\|.\|_N$. Then, a result for a risk related to the theoretical counterpart $\|.\|_f$ of $\|.\|_N$ is of interest, especially because the bias term in Inequality (\ref{empirical_risk_bound_1}) involves $\|.\|_f$. In order to establish such a result from the "discrete" (empirical) one, an empirical version of the constraint (\ref{stability_condition}) is required to ensure that the eigenvalues of $\widehat\Psi_{m_1}$ are not too close to $0$. Thus, this subsection deals with a risk bound, with respect to the $f$-weighted norm $\|.\|_f$, on the following truncated version of our estimator $\widehat p_{\mathbf m,t}$:
\begin{displaymath}
\widetilde p_{\mathbf m,t} :=\widehat p_{\mathbf m,t}\mathbf 1_{\Lambda_{m_1}},
\end{displaymath}
where
\begin{equation}\label{event_Lambda}
\Lambda_{m_1} :=
\left\{\mathfrak L_{\varphi}(m_1)(\|\widehat\Psi_{m_1}^{-1}\|_{\rm op}\vee 1)
\leqslant\mathfrak c_{\Lambda}\frac{NT}{\log(NT)}
\right\}.
\end{equation}
Under (\ref{stability_condition}), the probability of $\Lambda_{m_1}$ is near of one. Moreover, as expected, on the event $\Lambda_{m_1}$, $\widehat\Psi_{m_1}$ is invertible because
\begin{displaymath}
\inf\{{\rm sp}(\widehat\Psi_{m_1})\}
\geqslant\frac{\mathfrak L_{\varphi}(m_1)}{\mathfrak c_{\Lambda}}\cdot\frac{\log(NT)}{NT},
\end{displaymath}
where ${\rm sp}(\widehat\Psi_{m_1})$ is the spectrum of the random matrix $\widehat\Psi_{m_1}$, and then $\widetilde p_{\mathbf m,t}$ is well-defined.
%


%
\begin{theorem}\label{f_weighted_risk_bound_truncated}
Under Assumption \ref{condition_m},
\begin{enumerate}
 \item There exists a constant $\mathfrak c_{\ref{f_weighted_risk_bound_truncated},1} > 0$, not depending on $\mathbf m$ and $N$, such that for every $t\in (0,T]$,
 \begin{equation}\label{f_weighted_risk_bound_truncated_1}
 \mathbb E(\|\widetilde p_{\mathbf m,t} - p_{I\times J,t}\|_{f}^{2})
 \leqslant
 9\min_{\tau\in\mathcal S_{\bf m}}\|\tau - p_{I\times J,t}\|_{f}^{2} +
 \frac{8m_1\mathfrak L_{\psi}(m_2)}{N} +
 \frac{\mathfrak c_{\ref{f_weighted_risk_bound_truncated},1}(1 + R_f(t))}{N}
 \end{equation}
 with $R_f(t) = R(t) +\|p_{I\times J,t}\|_{f}^{2}$ and
 \begin{displaymath}
 R(t) =
 \frac{1}{T}\mathbb E\left[\left(\int_{0}^{T}\int_{-\infty}^{\infty}
 p_t(X_s,y)^2dyds\right)^2\right]^{\frac{1}{2}}.
 \end{displaymath}
 \item There exists a constant $\mathfrak c_{\ref{f_weighted_risk_bound_truncated},2} > 0$, not depending on $\mathbf m$ and $N$, such that
 \begin{equation}\label{f_weighted_risk_bound_truncated_2}
 \frac{1}{T}\int_{0}^{T}\mathbb E(\|\widetilde p_{\mathbf m,t} - p_{I\times J,t}\|_{f}^{2})dt
 \leqslant
 9\min_{\tau\in\mathcal S_{\bf m}}\left\{\frac{1}{T}\int_{0}^{T}
 \|\tau - p_{I\times J,t}\|_{f}^{2}dt\right\} +
 \frac{8m_1\mathfrak L_{\psi}(m_2)}{N} +
 \frac{\mathfrak c_{\ref{f_weighted_risk_bound_truncated},2}}{N}.
 \end{equation}
\end{enumerate}
\end{theorem}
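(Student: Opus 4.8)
The plan is to transfer the empirical risk bound of Theorem~\ref{empirical_risk_bound} to the $f$-weighted norm by working on a high-probability event on which $\|.\|_N$ and $\|.\|_f$ are equivalent on $\mathcal S_{\bf m}$, and to use the truncation $\widetilde p_{\bf m,t}=\widehat p_{\bf m,t}\mathbf 1_{\Lambda_{m_1}}$ to neutralise the contribution of the rare event on which $\widehat\Psi_{m_1}$ is ill-conditioned. Let $\Omega_{m_1}$ be the two-sided spectral comparison event (the symmetric analogue of the one used for Theorem~\ref{empirical_risk_bound}) on which $\tfrac12\Psi_{m_1}\preceq\widehat\Psi_{m_1}\preceq 2\Psi_{m_1}$, so that $\|\tau\|_f^2\leqslant 2\|\tau\|_N^2$ for every $\tau\in\mathcal S_{\bf m}$, with $\mathbb P(\Omega_{m_1}^c)\lesssim N^{-p}$ by Comte and Genon-Catalot~\cite{CGC20}, Lemma 6.1. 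Using that $\widetilde p_{\bf m,t}=\widehat p_{\bf m,t}$ on $\Lambda_{m_1}$ and $\widetilde p_{\bf m,t}=0$ on $\Lambda_{m_1}^c$, I would split

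\begin{displaymath}
\mathbb E(\|\widetilde p_{\bf m,t}-p_{I\times J,t}\|_f^2)
=\mathbb E(\|\widehat p_{\bf m,t}-p_{I\times J,t}\|_f^2\mathbf 1_{\Lambda_{m_1}\cap\Omega_{m_1}})
+\mathbb E(\|\widehat p_{\bf m,t}-p_{I\times J,t}\|_f^2\mathbf 1_{\Lambda_{m_1}\cap\Omega_{m_1}^c})
+\|p_{I\times J,t}\|_f^2\mathbb P(\Lambda_{m_1}^c).
\end{displaymath}

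For the first (main) term, let $\tau_{\bf m}^\star:=\arg\min_{\tau\in\mathcal S_{\bf m}}\|\tau-p_{I\times J,t}\|_f$ be the $f$-orthogonal projection, with squared bias $b^2:=\|\tau_{\bf m}^\star-p_{I\times J,t}\|_f^2$. Since $\widehat p_{\bf m,t}-\tau_{\bf m}^\star\in\mathcal S_{\bf m}$ is $f$-orthogonal to $\tau_{\bf m}^\star-p_{I\times J,t}$, Pythagoras gives the exact identity $\|\widehat p_{\bf m,t}-p_{I\times J,t}\|_f^2=\|\widehat p_{\bf m,t}-\tau_{\bf m}^\star\|_f^2+b^2$. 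On $\Omega_{m_1}$ the norm equivalence then yields $\|\widehat p_{\bf m,t}-\tau_{\bf m}^\star\|_f^2\leqslant 2\|\widehat p_{\bf m,t}-\tau_{\bf m}^\star\|_N^2\leqslant 4\|\widehat p_{\bf m,t}-p_{I\times J,t}\|_N^2+4\|\tau_{\bf m}^\star-p_{I\times J,t}\|_N^2$. Bounding $\mathbf 1_{\Lambda_{m_1}\cap\Omega_{m_1}}\leqslant\mathbf 1_{\Omega_{m_1}}$ (where $\widehat p_{\bf m,t}$ is well defined since $\widehat\Psi_{m_1}\succeq\tfrac12\Psi_{m_1}$), dropping the nonnegative indicators, using $\mathbb E(\|\tau_{\bf m}^\star-p_{I\times J,t}\|_N^2)=b^2$ and finally plugging in Theorem~\ref{empirical_risk_bound}, I get $4(b^2+2m_1\mathfrak L_\psi(m_2)/N+\mathfrak c/N)+5b^2=9b^2+8m_1\mathfrak L_\psi(m_2)/N+O(1/N)$, which reproduces the constants $9$ and $8$ of~(\ref{f_weighted_risk_bound_truncated_1}).

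It remains to absorb the two residual terms into $\mathfrak c(1+R_f(t))/N$. The truncation term is immediate: under~(\ref{stability_condition}) one has $\mathbb P(\Lambda_{m_1}^c)\lesssim N^{-p}$, so $\|p_{I\times J,t}\|_f^2\mathbb P(\Lambda_{m_1}^c)\lesssim\|p_{I\times J,t}\|_f^2/N$. The delicate term is the one on $\Lambda_{m_1}\cap\Omega_{m_1}^c$: here $\widehat p_{\bf m,t}$ is defined --- the constraint in~(\ref{event_Lambda}) forces $\inf\mathrm{sp}(\widehat\Psi_{m_1})\gtrsim\log(NT)/(NT)$ --- but the norm comparison fails, so I would apply Cauchy--Schwarz, $\mathbb E(\|\widehat p_{\bf m,t}-p_{I\times J,t}\|_f^2\mathbf 1_{\Lambda_{m_1}\cap\Omega_{m_1}^c})\leqslant\mathbb E(\|\widehat p_{\bf m,t}-p_{I\times J,t}\|_f^4\mathbf 1_{\Lambda_{m_1}})^{1/2}\mathbb P(\Omega_{m_1}^c)^{1/2}$, and control the fourth moment on $\Lambda_{m_1}$ through the eigenvalue bound together with moment estimates on the entries of $\widehat Z_{\bf m,t}$. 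This is where $R(t)$ arises: conditioning on $(X_s^i)$, the relevant quantities reduce to integrals of $p_t(X_s^i,.)^2$, and the square of $\tfrac1T\int_0^T\int p_t(X_s,y)^2\,dy\,ds$ is exactly the object defining $R(t)$. Since $p=2(q+4)+1$ is large enough (Assumption~\ref{condition_m}), the factor $\mathbb P(\Omega_{m_1}^c)^{1/2}\lesssim N^{-p/2}$ beats the polynomial growth of the moment bound and leaves $O((1+R(t))/N)$; collecting the three pieces gives~(\ref{f_weighted_risk_bound_truncated_1}) with $R_f(t)=R(t)+\|p_{I\times J,t}\|_f^2$. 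I expect this fourth-moment control on the ill-conditioned event to be the main obstacle.

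Part (2) follows by integrating~(\ref{f_weighted_risk_bound_truncated_1}) over $t\in(0,T]$ and dividing by $T$: the bias and variance terms integrate verbatim, and the only point is that $\tfrac1T\int_0^T R_f(t)\,dt$ is bounded by a $t$-free constant. For this I would use the Gaussian upper bound~(\ref{Gaussian_bound_density}): integrating $p_t(x,y)^2\leqslant\overline{\mathfrak c}_T^2 t^{-1}\exp(-2\overline{\mathfrak m}_T(y-\theta_t(x))^2/t)$ in $y$ produces a factor $\sqrt t$, so that $\int_{-\infty}^\infty p_t(x,y)^2\,dy\lesssim t^{-1/2}$ uniformly in $x$. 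Hence both $\|p_{I\times J,t}\|_f^2$ and $R(t)$ are $O(t^{-1/2})$, and since $\int_0^T t^{-1/2}\,dt=2\sqrt T<\infty$ the singularity at $t=0$ is integrable. This is precisely what makes $\mathfrak c_{\ref{f_weighted_risk_bound_truncated},2}$ in~(\ref{f_weighted_risk_bound_truncated_2}) independent of $t$, and it explains why~(\ref{f_weighted_risk_bound_truncated_1}) is only stated for $t\in(0,T]$, where $R_f(t)$ is finite but blows up as $t\to0$.
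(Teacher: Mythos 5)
Your proposal is correct and follows essentially the same route as the paper: Pythagoras with the $f$-orthogonal projection plus the factor-$2$ norm comparison on $\Omega_{m_1}$ to reduce the main term to Theorem \ref{empirical_risk_bound} (yielding exactly the constants $9$ and $8$), Cauchy--Schwarz with the $N^{-p}$ probability bounds and a crude fourth-moment/eigenvalue control of $\widehat p_{\mathbf m,t}$ on the bad events, and integrability of the $t^{-1/2}$ singularity for part (2). The only cosmetic difference is that you split on $\Lambda_{m_1}$ first (so the truncation contributes the deterministic $\|p_{I\times J,t}\|_f^2\,\mathbb P(\Lambda_{m_1}^c)$, whereas the paper's $R(t)$ actually enters through $\mathbb E(\|p_{I\times J,t}\|_N^2\mathbf 1_{\Lambda_{m_1}^c})$ rather than through the fourth-moment step as you suggest), which changes nothing since $R_f(t)\geqslant\|p_{I\times J,t}\|_f^2$.
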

\noindent
\textbf{Sketch of proof.} Let us provide a sketch of the proof of Theorem \ref{f_weighted_risk_bound_truncated}, which is detailed in Subsection \ref{section_proof_f_weighted_risk_bound_truncated}. In a first step, thanks to the definitions of both $\Omega_{m_1}$ and the orthogonal projection of $p_{I\times J,t}$ onto $\mathcal S_{\bf m}$ for the inner product $\langle .,.\rangle_f$, we show that
\begin{equation}\label{f_weighted_risk_bound_truncated_3}
\mathbb E(\|\widetilde p_{\mathbf m,t} - p_{I\times J,t}\|_{f}^{2})
\lesssim
\min_{\tau\in\mathcal S_{\bf m}}
\|\tau - p_{I\times J,t}\|_{f}^{2} + A + B + C,
\end{equation}
where
\begin{eqnarray*}
 A & := &
 \mathbb E(\|\widehat p_{\mathbf m,t} - p_{I\times J,t}\|_{N}^{2}
 \mathbf 1_{\Omega_{m_1}\cap\Lambda_{m_1}}),\\
 B & := &
 \mathbb E(\|p_{I\times J,t}\|_{N}^{2}
 \mathbf 1_{\Omega_{m_1}\cap\Lambda_{m_1}^{c}})\quad {\rm and}\quad
 C :=\mathbb E(\|\widetilde p_{\mathbf m,t} - p_{I\times J,t}\|_{f}^{2}
 \mathbf 1_{\Omega_{m_1}^{c}}).
\end{eqnarray*}
The (first) bias term in the right-hand side of Inequality (\ref{f_weighted_risk_bound_truncated_3}) remains the same as in Inequality (\ref{empirical_risk_bound_1}). In the same way, $A$ is of order $m_1\mathfrak L_{\psi}(m_2)/N + 1/N$ as in Theorem \ref{empirical_risk_bound}. Since $\mathbb P(\Lambda_{m_1}^{c})\lesssim 1/N^p$ by Comte and Genon-Catalot \cite{CGC20}, Lemma 6.1, we get that $B\lesssim R(t)/N$. Lastly, $C$ is controlled by $(1 +\|p_{I\times J,t}\|_{f}^{2})/N$ by using that
\begin{displaymath}
\|\widehat p_{\mathbf m,t}\|_{f}^{2} =
{\rm trace}(\Psi_{m_1}\widehat\Theta_{\mathbf m,t}\widehat\Theta_{\mathbf m,t}^{*})
\quad {\rm and}\quad
\mathbb P(\Omega_{m_1}^{c})\lesssim\frac{1}{N^p}.
\end{displaymath}
Inequality (\ref{f_weighted_risk_bound_truncated_2}) is deduced from Inequalities (\ref{Gaussian_bound_density}) and (\ref{f_weighted_risk_bound_truncated_1}).
\\
\\
{\bf Remarks:}
\begin{enumerate}
 \item When $I$ is a compact interval, the norms $\|.\|$ and $\|.\|_f$ are equivalent by Proposition \ref{properties_transition}.(1,2). In that case, one can deduce a risk bound on $\widetilde p_{\mathbf m,t}$, involving $\|.\|$ and not $\|.\|_f$, from Theorem \ref{f_weighted_risk_bound_truncated}. This is no longer true when $I$ is a non-compact interval, and the risk needs to be evaluated via the $f$-weighted norm $\|.\|_f$.
 \item By Jensen's inequality, Inequality (\ref{Gaussian_lower_bound_density}) and the change of variable formula,
 \begin{eqnarray*}
  R(t) & \geqslant &
  \frac{1}{T}\int_{0}^{T}\int_{-\infty}^{\infty}\mathbb E(p_t(X_s,y)^2)dyds =
  \int_{-\infty}^{\infty}\int_{-\infty}^{\infty}p_t(x,y)^2f(x)dxdy\\
  & \geqslant &
  \frac{\underline{\mathfrak c}_{T}^{2}}{t}\int_{-\infty}^{\infty}f(x)
  \int_{-\infty}^{\infty}\exp\left(-2\underline{\mathfrak m}_T\frac{(y -\theta_t(x))^2}{t}\right)dydx\\
  & = &
  \frac{\underline{\mathfrak c}_{T}^{2}}{t}
  \int_{-\infty}^{\infty}f(x)dx
  \int_{-\infty}^{\infty}\exp\left[-2\underline{\mathfrak m}_T\left(\frac{y}{t^{1/2}}\right)^2\right]dy\\
  & = &
  \frac{\underline{\mathfrak c}_{T}^{2}}{t^{1/2}}
  \int_{-\infty}^{\infty}e^{-2\underline{\mathfrak m}_Ty^2}dy =
  \frac{2\underline{\mathfrak c}_{T}^{2}\sqrt\pi}{\sqrt{2\underline{\mathfrak m}_Tt}}
  \xrightarrow[t\rightarrow 0^+]{} +\infty.
 \end{eqnarray*}
 Therefore,
 \begin{displaymath}
 \sup_{t\in (0,T]}R(t) = +\infty,
 \end{displaymath}
 and this is the reason why Inequality (\ref{f_weighted_risk_bound_truncated_2}) is also provided. Otherwise, only times $t\in [t_0,T]$, for some fixed $t_0\in (0,T)$, must be considered.
\end{enumerate}
%


%
\subsection{Rates in the anisotropic Sobolev-Hermite spaces}
In order to control the bias term in Theorems \ref{empirical_risk_bound} and \ref{f_weighted_risk_bound_truncated}, we assume that $p_t$ belongs to a Sobolev-Hermite space. In dimension $d = 1$, these function spaces have been introduced by Bongioanni and Torrea \cite{BT06}. The connection between Hermite's coefficients and functional regularity was established later (see Belomestny {\it et al.} \cite{BCGC19}) and are summarized in Comte and Genon-Catalot \cite{CGC18}, Appendix A. The definition of these spaces can be extended on $A :=\mathbb R^d$ for any $d\in\mathbb N^*$ (see Comte and Lacour \cite{CL23}, Section 2.3).
\\
\\
{\bf Notations:}
\begin{itemize}
 \item For every $\mathbf k = (k_1,\dots,k_d)\in\mathbb N^d$ and $\mathbf s = (s_1,\dots,s_d)\in (0,\infty)^d$, $\mathbf k^{\bf s} := k_{1}^{s_1}\times\dots\times k_{d}^{s_d}$.
 \item For every $g\in\mathbb L^2(A)$ and $\mathbf k = (k_1,\dots,k_d)\in\mathbb N^d$, $a_{\bf k}(g) :=\langle g,h_{k_1}\otimes\dots\otimes h_{k_d}\rangle$.
\end{itemize}
Throughout this subsection, both $\mathcal S_{\varphi,m_1}$ and $\mathcal S_{\psi,m_2}$ are generated by the Hermite basis, and then $I = J =\mathbb R$. First, let us recall the definition of the Sobolev-Hermite ellipsoid on $A :=\mathbb R^d$, of order $\mathbf s = (s_1,\dots,s_d)\in (0,\infty)^d$ and of radius $L > 0$.
%


%
\begin{definition}\label{Sobolev_Hermite_ellipsoids} (Sobolev-Hermite ellispoids) The Sobolev-Hermite ellipsoid $W_{\mathbf s}^{(d)}(A,L)$ of order $\mathbf s$ and of radius $L$ is defined by
\begin{displaymath}
W_{\mathbf s}^{(d)}(A,L) :=
\left\{g\in\mathbb L^2(A) :
\sum_{\mathbf k\in\mathbb N^d}
a_{\bf k}(g)^2\mathbf k^{\bf s}\leqslant L\right\}.
\end{displaymath}
\end{definition}
\noindent
Now, by assuming that $p_t$ belongs to $W^{(2)}_{\bf s}(A,L)$, the bias term in Theorems \ref{empirical_risk_bound} and \ref{f_weighted_risk_bound_truncated} decreases to $0$ with polynomial rate. Indeed, noting $p_{\mathbf m,t}$ the orthogonal projection of $p_t$ on $\mathcal S_{\bf m}$,
\begin{eqnarray*}
 \|p_t - p_{\mathbf m,t}\|^2 & = &
 \sum_{\mathbf k\in\mathbb N^2 :\exists q\in\{1,2\}, k_q\geqslant m_q}a_{\bf k}(p_t)^2\\
 & \leqslant &
 \sum_{q = 1}^{2}\sum_{\mathbf k\in {\mathbb N}^2 : k_q\geqslant m_q}
 a_{\bf k}(p_t)^2k_{q}^{s_q}k_{q}^{-s_q}\leqslant
 L(m_{1}^{-s_1} + m_{2}^{-s_2})
\end{eqnarray*}
and then, since $p_{I\times J,t}=p_t$ and $f(\cdot)\leqslant 2\mathfrak c_TT^{-1/2}$,
\begin{displaymath}
\min_{\tau\in\mathcal S_{\bf m}}\|\tau - p_{I\times J,t}\|_{f}^{2}\leqslant
\frac{2\mathfrak c_TL}{\sqrt T}\|p_t - p_{\mathbf m,t}\|^2\leqslant
\frac{2\mathfrak c_TL}{\sqrt T}(m_{1}^{-s_1} + m_{2}^{-s_2}).
\end{displaymath}
Thanks to this control of the bias term, and since the variance term in Theorem \ref{empirical_risk_bound} is of order $m_1\sqrt m_2/N$ when $\mathcal S_{\psi,m_2}$ is generated by the Hermite basis (see Subsection \ref{section_empirical_risk_bound}), one can establish the following proposition.
%


%
\begin{proposition}\label{empirical_risk_bound_Sobolev_Hermite}
Assume that $p_t$ belongs to $W_{\mathbf s}^{(2)}(A,L)$, and consider $\mathbf m^* = (m_{1}^{*},m_{2}^{*})$ with
\begin{displaymath}
m_{1}^{*}\propto N^{\frac{s_2}{s_1s_2 + s_1/2 + s_2}}
\quad\textrm{ and }\quad
m_{2}^{*}\propto N^{\frac{s_1}{s_1s_2 + s_1/2 + s_2}}.
\end{displaymath}
Then,
\begin{displaymath}
\mathbb E(\|\widehat p_{\mathbf m^*,t} - p_t\|_{N}^{2}) =
O\left(N^{-\frac{1}{1 +\frac{1}{s_1} +\frac{1}{2s_2}}}\right),
\end{displaymath}
provided that $m_{1}^{*}$ satisfies the stability condition (\ref{stability_condition}) in Assumption \ref{condition_m}.
\end{proposition}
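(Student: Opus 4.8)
The plan is to apply Theorem \ref{empirical_risk_bound} verbatim and then perform a deterministic bias-variance optimization over $\mathbf m=(m_1,m_2)$. Since the Hermite basis is used, $I=J=\mathbb R$, so $p_{I\times J,t}=p_t$ and Theorem \ref{empirical_risk_bound} gives
\[
\mathbb E(\|\widehat p_{\mathbf m,t}-p_t\|_N^2)\leqslant
\min_{\tau\in\mathcal S_{\mathbf m}}\|\tau-p_t\|_f^2
+\frac{2m_1\mathfrak L_\psi(m_2)}{N}+\frac{\mathfrak c_{\ref{empirical_risk_bound}}}{N}.
\]
First I would insert the two ingredients already recorded just above the proposition: the bias bound $\min_\tau\|\tau-p_t\|_f^2\leqslant (2\mathfrak c_TL/\sqrt T)(m_1^{-s_1}+m_2^{-s_2})$, valid because $p_t\in W_{\mathbf s}^{(2)}(A,L)$ and $f\leqslant 2\mathfrak c_TT^{-1/2}$, together with the Hermite control $\mathfrak L_\psi(m_2)\leqslant\mathfrak c_h\sqrt{m_2}$ from (\ref{L_Hermite_basis}). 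These reduce the right-hand side, up to multiplicative constants, to the sum of three terms $m_1^{-s_1}+m_2^{-s_2}+m_1\sqrt{m_2}/N$, plus the negligible remainder $1/N$.

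Next I would minimize the deterministic function $m_1^{-s_1}+m_2^{-s_2}+m_1m_2^{1/2}/N$. Writing $m_1=N^a$ and $m_2=N^b$, the three exponents become $-as_1$, $-bs_2$ and $a+b/2-1$; equating them yields the linear system $as_1=bs_2$ and $a(1+s_1)+b/2=1$, whose solution is precisely $a=s_2/(s_1s_2+s_1/2+s_2)$ and $b=s_1/(s_1s_2+s_1/2+s_2)$, i.e. the stated exponents for $m_1^*$ and $m_2^*$. With these choices the three terms share the common order $N^{-as_1}$, and a short rearrangement shows
\[
as_1=\frac{s_1s_2}{s_1s_2+s_1/2+s_2}=\frac{2s_1s_2}{2s_1s_2+s_1+2s_2}
=\left(1+\frac{1}{s_1}+\frac{1}{2s_2}\right)^{-1},
\]
which is exactly the announced rate exponent.

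Finally I would check admissibility of $\mathbf m^*$. Since $a,b\in(0,1)$, both $m_1^*$ and $m_2^*$ are $o(N)$, hence eventually below $N_T=[NT]+1$; the weak limit (\ref{weak_limit_psi}) holds since $\mathfrak L_\psi(m_2^*)$ is of order $N^{b/2}$ with $b/2<1\leqslant q$; and the remainder $\mathfrak c_{\ref{empirical_risk_bound}}/N$ is $o(N^{-as_1})$ because $as_1<1$, so it does not affect the rate. The one part that cannot be verified in general is the stability condition (\ref{stability_condition}) on $m_1^*$: for the Hermite basis $\|\Psi_{m_1}^{-1}\|_{\rm op}$ grows like $\sqrt{m_1}$ (by the cited lower bound), so (\ref{stability_condition}) constrains the admissible range of $m_1$ and is therefore carried as an explicit hypothesis in the statement. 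The only genuinely delicate point is the exponent bookkeeping confirming that the balanced rate matches $N^{-1/(1+1/s_1+1/(2s_2))}$; everything else is a direct substitution into Theorem \ref{empirical_risk_bound} and the Hermite bound (\ref{L_Hermite_basis}).
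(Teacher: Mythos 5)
Your argument is correct and is exactly the computation the paper intends: the paper omits the proof of Proposition \ref{empirical_risk_bound_Sobolev_Hermite} (referring to Comte and Lacour, Proposition 2), but the ingredients you combine --- Theorem \ref{empirical_risk_bound} with $p_{I\times J,t}=p_t$ since $I=J=\mathbb R$, the Sobolev--Hermite bias bound $m_1^{-s_1}+m_2^{-s_2}$, and $\mathfrak L_h(m_2)\lesssim\sqrt{m_2}$ --- are precisely those recorded just before the statement, and your exponent bookkeeping ($as_1=\frac{2s_1s_2}{2s_1s_2+s_1+2s_2}=(1+\tfrac1{s_1}+\tfrac1{2s_2})^{-1}$) checks out. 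One small caveat: your claim that $b\in(0,1)$ is not automatic, since $b=s_1/(s_1s_2+s_1/2+s_2)>1$ when $s_2<s_1/(2(s_1+1))$, in which case $m_2^*$ would exceed the admissible range $\mathcal N$; this low-regularity regime is left implicit by the paper as well and does not affect the substance of your proof.
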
 
\noindent
The proof of Proposition \ref{empirical_risk_bound_Sobolev_Hermite} follows the same line as Comte and Lacour \cite{CL23}, Proposition 2, and is omitted here. We mention that, for conditional density estimation, Comte and Lacour \cite{CL23} also prove a lower bound for this rate (see their Theorem 2 in Section 3.5), under some additional conditions. Note that, for instance, other rates may be obtained on Sobolev-Fourier spaces associated with the trigonometric basis.
%


%
\section{Model selection}\label{section_model_selection}
Clearly the values of $m_{1}^{*}$ and $m_{2}^{*}$ depend on $s_1$ and $s_2$, the regularity indexes of the unknown function to estimate. Therefore, these results have theoretical interest to show the consistency and evaluate the rate of the estimators, but cannot be used in practice. In this section, we study a model selection method leading to a data-driven choice of $m_1$ and $m_2$. 
%


%
\subsection{General case}
In order to introduce an appropriate model selection criterion, throughout this section, the $\varphi_j$'s and the $\psi_{\ell}$'s fulfill the following additional assumption.
%


%
\begin{assumption}\label{assumption_projection_spaces}
The $\varphi_j$'s and the $\psi_{\ell}$'s fulfill the three following conditions:
\begin{enumerate}
 \item For every $m_1,M_1\in\{1,\dots,N_T\}$, if $M_1 > m_1$, then $\mathcal S_{\varphi,m_1}\subset\mathcal S_{\varphi,M_1}$.
 \item For every $m_2,M_2\in\{1,\dots,N_T\}$, if $M_2 > m_2$, then $\mathcal S_{\psi,m_2}\subset\mathcal S_{\psi,M_2}$.
 \item There exists a constant $\mathfrak c_{\varphi}\geqslant 1$, not depending on $N$, such that
 \begin{displaymath}
 \mathfrak L_{\varphi}(m_1)\leqslant\mathfrak c_{\varphi}^{2}m_1
 \textrm{ $;$ }\forall m_1\in\{1,\dots,N_T\}.
 \end{displaymath}
\end{enumerate}
\end{assumption}
\noindent
Note that the first two conditions above mean that the two univariate collections of models are nested, which of course does not imply that the product spaces are. For instance, the compactly supported trigonometric basis, and both the non-compactly supported Laguerre and Hermite bases, fulfill Assumption \ref{assumption_projection_spaces}.
\\
\\
Now, we define a data-driven criterion allowing to select a model $\widehat{\bf m}$ making a squared-bias/variance compromise. Such a criterion is a function, to minimize with respect to $\mathbf m$ taken in a finite collection, estimating the $\mathbb L^2$-risk as the sum of the estimated squared bias of $\widehat p_{\mathbf m,t}$ penalized by a term having the order of its variance term. On the one hand, the squared bias of $\widehat p_{\mathbf m,t}$ is estimated by $\gamma_N(\widehat p_{\mathbf m,t}) = -\|\widehat p_{\mathbf m,t}\|_{N}^{2}$. On the other hand, the order of the variance should be  estimated by its bound $m_1\mathfrak L_{\psi}(m_2)/N$, up to a multiplicative constant to calibrate.  But in the general case, we need to take a penalty of order $m_1\mathfrak L_{\psi}(m_2)\log(N)/N$, implying a slightly degraded rate of the adaptive estimator. Precisely, we consider
\begin{equation}\label{model_selection_criterion}
\widehat{\bf m} =\widehat{\bf m}(t) :=
\arg\min_{\mathbf m\in\widehat{\mathcal M}_N}
\{-\|\widehat p_{\mathbf m,t}\|_{N}^{2} + 2\kappa{\rm pen}(\mathbf m)\},
\end{equation}
where $\kappa\geqslant\kappa_0$, $\kappa_0 > 0$ is defined later,
\begin{equation}\label{penalty}
{\rm pen}(\mathbf m) :=
(1 +\log(N))\frac{m_1\mathfrak L_{\psi}(m_2)}{N}
\textrm{ $;$ }
\forall\mathbf m = (m_1,m_2)\in\{1,\dots,N_T\}^2,
\end{equation}
and $\widehat{\mathcal M}_N :=\mathcal U_N\cap (\widehat{\mathcal V}_N\times\mathcal N)$ with $\mathcal N =\{1,\dots,N\wedge N_T\}$,
\begin{eqnarray*}
 \mathcal U_N & = &
 \{(m_1,m_2)\in\mathcal N^2 : m_1\mathfrak L_{\psi}(m_2)\leqslant N\},\\
 & &
 \hspace{1cm}
 \widehat{\mathcal V}_N =
 \left\{m_1\in\mathcal N :
 \mathfrak c_{\varphi}^{2}m_1(\|\widehat\Psi_{m_1}^{-1}\|_{\rm op}^{2}\vee 1)
 \leqslant\mathfrak d\frac{NT}{\log(NT)}\right\}
\end{eqnarray*}
and
\begin{displaymath}
\mathfrak d =
\min\left(\frac{1}{8\mathfrak c_{\varphi}^{2}T(\|f\|_{\infty} +
(3\mathfrak c_{\varphi})^{-1}\sqrt{\mathfrak c_{\Lambda}/8})(1 + p)},
\frac{\mathfrak c_{\Lambda}}{8}\right)
\quad (\mathfrak c_{\Lambda} =\frac{1 -\log(2)}{(1 + p)T}).
\end{displaymath}
Note that since $\mathfrak c_{\varphi}$, $\mathfrak c_{\Lambda}$, $p$, $T$ and $\|f\|_{\infty}$ are all positive, then $\mathfrak d$ is well-defined. Note also that for every $\mathbf m = (m_1,m_2)\in\mathcal U_N$, $m_2$ fulfills the first part (\ref{weak_limit_psi}) of Assumption \ref{condition_m} with $q = 1$ (or larger), and then we take $p = 11$ (recall that $p = 2(q + 4) + 1$) in this section. Consider also the theoretical counterpart $\mathcal M_N :=\mathcal U_N\cap (\mathcal V_N\times\mathcal N)$ of $\widehat{\mathcal M}_N$, where
\begin{displaymath}
\mathcal V_N :=
\left\{m_1\in\mathcal N :
\mathfrak c_{\varphi}^{2}m_1(\|\Psi_{m_1}^{-1}\|_{\rm op}^{2}\vee 1)
\leqslant\frac{\mathfrak d}{4}\cdot\frac{NT}{\log(NT)}\right\}.
\end{displaymath}
The following theorem provides a risk bound on the adaptive estimator $\widehat p_{\widehat{\bf m},t}$.
%


%
\begin{theorem}\label{risk_bound_adaptive_estimator}
Under Assumption \ref{assumption_projection_spaces}, for $\kappa_0 = 44a$ and $a\geqslant (2\cdot 84\sqrt{\mathfrak dT})^2/2$,
\begin{enumerate}
 \item There exists a constant $\mathfrak c_{\ref{risk_bound_adaptive_estimator},1} > 0$, not depending on $N$, such that for every $t\in (0,T]$,
 \begin{displaymath}
 \mathbb E(\|\widehat p_{\widehat{\bf m},t} - p_{I\times J,t}\|_{N}^{2})
 \leqslant
 6\min_{\mathbf m\in\mathcal M_N}\{\mathbb E(\|\widehat p_{\mathbf m,t} - p_{I\times J,t}\|_{N}^{2}) +
 \kappa{\rm pen}(\mathbf m)\} +
 \frac{\mathfrak c_{\ref{risk_bound_adaptive_estimator},1}(1 + R(t))}{N}.
 \end{displaymath}
 \item There exists a constant $\mathfrak c_{\ref{risk_bound_adaptive_estimator},2} > 0$, not depending on $N$, such that
 \begin{displaymath}
 \frac{1}{T}\int_{0}^{T}\mathbb E(\|\widehat p_{\widehat{\bf m}(t),t} - p_{I\times J,t}\|_{N}^{2})dt
 \leqslant
 6\min_{\mathbf m\in\mathcal M_N}\left\{
 \frac{1}{T}\int_{0}^{T}\mathbb E(\|\widehat p_{\mathbf m,t} - p_{I\times J,t}\|_{N}^{2})dt +
 \kappa{\rm pen}(\mathbf m)\right\} +
 \frac{\mathfrak c_{\ref{risk_bound_adaptive_estimator},2}}{N}.
 \end{displaymath}
\end{enumerate}
\end{theorem}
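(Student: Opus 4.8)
The plan is to run the standard penalized least-squares (oracle) machinery, while keeping track of the empirical norm $\|\cdot\|_N$, of the blow-up of the process variance as $t\to 0$, and of the fact that the selection is performed on the \emph{random} collection $\widehat{\mathcal M}_N$ rather than on $\mathcal M_N$. For a function $\tau$ on $\mathbb R^2$, introduce the centered empirical process
\begin{displaymath}
\nu_N(\tau):=\frac{1}{NT}\sum_{i=1}^N\int_0^T\Big(\tau(X_s^i,X_{s+t}^i)-\int_{-\infty}^\infty\tau(X_s^i,y)p_t(X_s^i,y)\,dy\Big)ds,
\end{displaymath}
which has mean zero and satisfies, for $\tau\in\mathcal S_{\bf m}$, the identity $\gamma_N(\tau)=\|\tau-p_{I\times J,t}\|_N^2-\|p_{I\times J,t}\|_N^2-2\nu_N(\tau)$. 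Since $\gamma_N(\widehat p_{{\bf m},t})=-\|\widehat p_{{\bf m},t}\|_N^2$, the defining property of $\widehat{\bf m}$ gives, for every fixed ${\bf m}\in\widehat{\mathcal M}_N$,
\begin{displaymath}
\|\widehat p_{\widehat{\bf m},t}-p_{I\times J,t}\|_N^2\leqslant\|\widehat p_{{\bf m},t}-p_{I\times J,t}\|_N^2+2\nu_N(\widehat p_{\widehat{\bf m},t}-\widehat p_{{\bf m},t})+2\kappa({\rm pen}({\bf m})-{\rm pen}(\widehat{\bf m})).
\end{displaymath}
The whole difficulty is concentrated in the cross term, because $u:=\widehat p_{\widehat{\bf m},t}-\widehat p_{{\bf m},t}$ is a \emph{random} element of $\mathcal S_{{\bf m}\vee\widehat{\bf m}}$, the nestedness in Assumption \ref{assumption_projection_spaces} ensuring $\mathcal S_{\bf m},\mathcal S_{\widehat{\bf m}}\subset\mathcal S_{{\bf m}\vee\widehat{\bf m}}$ with ${\bf m}\vee\widehat{\bf m}=(m_1\vee\widehat m_1,m_2\vee\widehat m_2)$.

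First I would show that $\mathcal M_N\subset\widehat{\mathcal M}_N$ on an event of probability $1-O(N^{-p})$: this follows from the factor $4$ gap between the thresholds defining $\mathcal V_N$ (in terms of $\|\Psi_{m_1}^{-1}\|_{\rm op}$) and $\widehat{\mathcal V}_N$ (in terms of $\|\widehat\Psi_{m_1}^{-1}\|_{\rm op}$), combined with the comparison $\widehat\Psi_{m_1}\approx\Psi_{m_1}$ valid on $\Omega_{m_1}$ (resp. $\Lambda_{m_1}$), whose complements have probability $O(N^{-p})$ by Comte and Genon-Catalot \cite{CGC20}, Lemma 6.1. On this event the fixed ${\bf m}$ may be taken anywhere in $\mathcal M_N$. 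Next, I split the cross term by $2\nu_N(u)\leqslant\theta\|u\|_f^2+\theta^{-1}G_{{\bf m}\vee\widehat{\bf m}}^2$, where $G_{{\bf m}'}:=\sup_{\tau\in\mathcal S_{{\bf m}'},\,\|\tau\|_f\leqslant 1}\nu_N(\tau)$, and use $\|u\|_f^2\leqslant 2\|\widehat p_{\widehat{\bf m},t}-p_{I\times J,t}\|_f^2+2\|\widehat p_{{\bf m},t}-p_{I\times J,t}\|_f^2$: after passing from $\|\cdot\|_f$ to $\|\cdot\|_N$ on $\Omega_{\cdot}$ (up to the factor $2$), the first half is absorbed into the left-hand side while the second is sent to the risk of the oracle model. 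This is what produces the global constant $6$ and fixes the admissible range of $\theta$, hence the choice $\kappa_0=44a$.

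The crux is a Talagrand-type bound on $G_{{\bf m}'}$, uniform over ${\bf m}'\in\mathcal M_N$. Writing $\nu_N$ as a normalised sum over the $N$ independent paths of the path-functional $\int_0^T(\cdots)ds$, I would compute the three ingredients: the mean term $H^2=\mathbb E[G_{{\bf m}'}^2]$, bounded via $\sum_{j,\ell}\mathrm{Var}(\nu_N(\varphi_j\otimes\psi_\ell))$ by a multiple of $m_1'\mathfrak L_\psi(m_2')/N$; and the variance factor and sup-norm factor of the increments, both controlled through the Gaussian upper bound \eqref{Gaussian_bound_density} of Proposition \ref{properties_transition}. This is exactly where $R(t)$ and $\|p_{I\times J,t}\|_f^2$ enter, since bounding $\mathbb E[(\int_0^T\int\tau(X_s,y)p_t(X_s,y)\,dy\,ds)^2]$ by Cauchy--Schwarz brings in $\int_0^T\int p_t(X_s,y)^2\,dy\,ds$. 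Choosing the threshold proportional to $(1+\log N)m_1'\mathfrak L_\psi(m_2')/N$ makes the residual (Bernstein-type) deviation term summable over the $O(N^2)$ models of $\mathcal M_N$; this is the mechanism forcing the extra $\log N$ in ${\rm pen}$, and it is precisely what can be dropped when $R(t)$ stays bounded ($t>t_0$) or $I$ is compact (norms equivalent). Summing yields $\sum_{{\bf m}'\in\mathcal M_N}\mathbb E[(G_{{\bf m}'}^2-c\,{\rm pen}({\bf m}'))_+]=O((1+R(t))/N)$ for $\kappa$ large enough, which is where $a\geqslant(2\cdot 84\sqrt{\mathfrak dT})^2/2$ and $\kappa_0=44a$ are calibrated.

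Combining the three previous steps on the good event, and bounding the contribution of its complement by $O((1+R(t))/N)$ through $\|\widehat p_{\widehat{\bf m},t}\|_f^2=\mathrm{trace}(\Psi_{m_1}\widehat\Theta_{\widehat{\bf m},t}\widehat\Theta_{\widehat{\bf m},t}^*)$ together with $\mathbb P(\Omega_{m_1}^c\cup\Lambda_{m_1}^c)=O(N^{-p})$, exactly as in the sketch of Theorem \ref{f_weighted_risk_bound_truncated}, one obtains statement (1) after taking expectations and minimising over ${\bf m}\in\mathcal M_N$. Statement (2) then follows by integrating (1) in $t$ over $[0,T]$ and using $\frac1T\int_0^T R(t)\,dt<\infty$ — indeed $R(t)\lesssim t^{-1/2}$ is integrable near $0$ by \eqref{Gaussian_bound_density} — so the $t$-dependent remainder collapses to $\mathfrak c_{\ref{risk_bound_adaptive_estimator},2}/N$. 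I expect the concentration step, namely the explicit computation of the Talagrand constants for the path-integrated process and the correct placement of the $\log N$ and $R(t)$ factors, to be the main obstacle; the remaining steps are essentially bookkeeping of constants and of high-probability events.
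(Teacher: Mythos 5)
Your overall architecture — the basic inequality from the definition of $\widehat{\bf m}$, a Talagrand bound with a $\log N$-inflated threshold summed over a deterministic model collection, bad events of probability $O(N^{-p})$ contributing the $(1+R(t))/N$ remainder, and integration in $t$ for part (2) using $R(t)\lesssim t^{-1/2}$ — matches the paper's. But the treatment of the cross term contains a step that fails as written, and it is precisely the point the paper's proof is engineered to circumvent. You bound $2\nu_N(u)$, $u=\widehat p_{\widehat{\bf m},t}-\widehat p_{\mathbf m,t}$, by $\theta\|u\|_f^2+\theta^{-1}G_{\mathbf m\vee\widehat{\bf m}}^2$ with $G_{\mathbf m'}$ the supremum of $\nu_N$ over the $f$-unit ball of the tensor join $\mathcal S_{\mathbf m\vee\widehat{\bf m}}$, and you then need $\theta^{-1}G_{\mathbf m\vee\widehat{\bf m}}^2$ to be compensated by ${\rm pen}(\mathbf m)+{\rm pen}(\widehat{\bf m})$. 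The penalty $m_1\mathfrak L_{\psi}(m_2)(1+\log N)/N$ is multiplicative in the two directions and therefore not subadditive with respect to the join: for $\mathfrak L_{\psi}(m)\asymp m$, $\mathbf m=(m,1)$ and $\mathbf m'=(1,m)$ give ${\rm pen}(\mathbf m\vee\mathbf m')\asymp m^2\log(N)/N$ while ${\rm pen}(\mathbf m)+{\rm pen}(\mathbf m')\asymp m\log(N)/N$. Since $\mathbb E[G_{\mathbf m\vee\mathbf m'}^2]$ is itself of order $(m_1\vee m_1')\mathfrak L_{\psi}(m_2\vee m_2')/N$, the positive part $\bigl(G_{\mathbf m\vee\mathbf m'}^2-c({\rm pen}(\mathbf m)+{\rm pen}(\mathbf m'))\bigr)_+$ has non-negligible expectation and its sum over pairs of models is not $O(1/N)$. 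This is the standard anisotropy obstruction. The paper's Step 1 avoids it entirely: introducing the largest model $\mathbf M$, using $\widehat\Pi_{\bf m}(\widehat p_{\mathbf M,t})=\widehat p_{\mathbf m,t}$ (Lemma \ref{projection_estimator_smaller_model}) and the resulting Pythagoras identities, it rewrites the cross term as $R_{\bf M}$ plus two single-model fluctuations, and then bounds $R_{\bf M}$ so that every supremum of $\nu_N$ that survives is of the form $\|\widehat p_{\mathbf m',t}-\widehat\Pi_{\mathbf m'}(p_t)\|_N^2=\sup_{\tau\in\mathcal S_{\mathbf m'},\|\tau\|_N=1}\nu_N(\tau)^2$ with $\mathbf m'\in\{\mathbf m,\widehat{\bf m}\}$ — each compensated by its own penalty. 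You need this reduction (or an equivalent one to single-model or sum-space suprema) for your argument to close; without it the choice of $\kappa_0$ and the summability over models cannot be made to work.

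Two smaller points. First, you only state the inclusion $\mathcal M_N\subset\widehat{\mathcal M}_N$; you equally need $\widehat{\mathcal M}_N\subset\mathfrak M_N$ for a deterministic superset $\mathfrak M_N$ (the paper's event $\Xi_N$ and Lemma \ref{Xi_event}), both to index the Talagrand sum over a non-random collection containing every possible value of $\widehat{\bf m}$ and to guarantee that $\widehat m_1$ satisfies the stability bound used for $\sup_{\|\tau\|_f^2\leqslant 2}\|\tau\|_{\infty}\lesssim N/\sqrt{\log N}$. Second, in the general case the paper takes the trivial variance bound $\upsilon=NH^2$ in the Klein--Rio inequality — this, not the size of $p_t$, is what forces the $\log N$ in the penalty — and $R(t)$ enters only through the bad-event term $\mathbb E(\|p_{I\times J,t}\|_N^2\mathbf 1_{\Xi_N^c\cup\Omega_N^c})\leqslant R(t)\,\mathbb P(\cdot)^{1/2}$, not through the Talagrand constants; your attribution of $R(t)$ to the concentration step is misplaced, though you do also describe the bad-event mechanism correctly. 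These two items are fixable bookkeeping; the join-space issue above is the real gap.
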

\noindent
The first result in Theorem \ref{risk_bound_adaptive_estimator} means that the final estimator $\widehat p_{\widehat{\bf m},t}$ makes automatically (up to a multiplicative constant which may be taken equal to $6+2\kappa$) the bias-variance tradeoff by keeping in mind Inequality (\ref{empirical_risk_bound_1}) which provides a risk bound on $\mathbb E(\|\widehat p_{\mathbf m,t} - p_{I\times J,t}\|_{N}^{2})$ for every $\mathbf m\in\mathcal M_N$.
\\
\\
\textbf{Sketch of proof.} Let us provide a sketch of the proof of Theorem \ref{risk_bound_adaptive_estimator}, which is detailed in Subsection \ref{section_proof_risk_bound_adaptive_estimator}. In a first step, by using that $\widehat\Pi_{\mathbf m_1}(\widehat p_{\mathbf m_2,t}) =\widehat p_{\mathbf m_1,t}$ for every $\mathbf m_1,\mathbf m_2\in\mathcal N^2$ satisfying $\mathcal S_{\mathbf m_1}\subset\mathcal S_{\mathbf m_2}$, we show that for every $\mathbf m\in\widehat{\mathcal M}_N$,
\begin{equation}\label{risk_bound_adaptive_estimator_6}
\|\widehat p_{\widehat{\bf m},t} - p_{I\times J,t}\|_{N}^{2}\leqslant
6\|\widehat p_{\mathbf m,t} - p_{I\times J,t}\|_{N}^{2} +
4\kappa{\rm pen}(\mathbf m) + 11\left(
\|\widehat p_{\widehat{\bf m},t} -\widehat{\Pi}_{\widehat{\bf m}}(p_t)\|_{N}^{2} -
\frac{2}{11}\kappa{\rm pen}(\widehat{\bf m})\right)_+.
\end{equation}
Now, consider the event $\Xi_N :=\{\mathcal M_N\subset\widehat{\mathcal M}_N\subset\mathfrak M_N\}$ with $\mathfrak M_N =\mathcal U_N\cap (\mathfrak V_N\times\mathcal N)$ and
\begin{displaymath}
\mathfrak V_N =
\left\{m_1\in\mathcal N :
\mathfrak c_{\varphi}^{2}m_1(\|\Psi_{m_1}^{-1}\|_{\rm op}^{2}\vee 1)
\leqslant 4\mathfrak d\frac{NT}{\log(NT)}\right\},
\end{displaymath}
and recall that $\Omega_{m_1}$ ($m_1\in\mathcal N$) is an event on which the empirical norm $\|.\|_{N,1}$ and its theoretical counterpart $\|.\|_{f,1}$ are equivalent for functions in $\mathcal S_{\varphi,m_1}$. By Comte and Genon-Catalot \cite{CGC20}, Lemma 6.1 and Inequality (6.17), the events $\Xi_N$ and $\Omega_N :=\cap_{m_1\in\mathfrak V_N}\Omega_{m_1}$ have probability near of 1:
\begin{equation}\label{risk_bound_adaptive_estimator_7}
\mathbb P(\Xi_{N}^{c})\lesssim\frac{1}{N^{p - 1}}
\quad {\rm and}\quad
\mathbb P(\Omega_{N}^{c})\lesssim\frac{1}{N^{p - 1}}.
\end{equation}
Consider also the empirical process $\nu_N$ defined by
\begin{displaymath}
\nu_N(\tau) :=
\frac{1}{NT}\sum_{i = 1}^{N}\int_{0}^{T}\left(
\tau(X_{s}^{i},X_{s + t}^{i}) -\int_{-\infty}^{\infty}\tau(X_{s}^{i},y)p_t(X_{s}^{i},y)dy\right)ds
\textrm{ $;$ }\forall\tau\in\mathcal S_{\bf m}.
\end{displaymath}
In a second step, thanks to Equality (\ref{empirical_risk_bound_3}), we deduce from (\ref{risk_bound_adaptive_estimator_7}) and Inequality (\ref{risk_bound_adaptive_estimator_6}) that
\begin{eqnarray*}
 \mathbb E(\|\widehat p_{\widehat{\bf m},t} - p_{I\times J,t}\|_{N}^{2})
 & \lesssim &
 \min_{\mathbf m\in\mathcal M_N}\{
 \mathbb E(\|\widehat p_{\mathbf m,t} - p_{I\times J,t}\|_{N}^{2}) +
 \kappa{\rm pen}(\mathbf m)\}\\
 & &
 \hspace{0.75cm} +
 \sum_{\mathbf m\in\mathfrak M_N}
 \mathbb E\left[\left(\sup_{\tau\in\mathcal S_{\bf m} :\|\tau\|_f\leqslant 2}\nu_N(\tau)^2
 -\frac{2}{11}\kappa{\rm pen}(\mathbf m)\right)_+\mathbf 1_{\Xi_N\cap\Omega_N}\right] +
 \frac{1 + R(t)}{N}.
\end{eqnarray*}
In a third step, we prove that
\begin{displaymath}
\sum_{\mathbf m\in\mathfrak M_N}
\mathbb E\left[\left(\sup_{\tau\in\mathcal S_{\bf m} :\|\tau\|_f\leqslant 2}\nu_N(\tau)^2
-\frac{2}{11}\kappa{\rm pen}(\mathbf m)\right)_+\mathbf 1_{\Xi_N\cap\Omega_N}\right]
\lesssim\frac{1}{N}
\end{displaymath}
thanks to Klein and Rio's extension of Talagrand's inequality (see \cite{KR05}, Theorem 1.2). In a fourth step, we deduce Theorem \ref{risk_bound_adaptive_estimator}.(2) from Theorem \ref{risk_bound_adaptive_estimator}.(1) and Inequality (\ref{Gaussian_bound_density}).
%


%
\subsection{Two special cases: $t > t_0 > 0$ and compactly supported bases}
This subsection deals with two interesting special cases, where the extra $\log(N)$ term in the penalty given by (\ref{penalty}), can be avoided. First, let us consider $t\in [t_0,T]$ with a fixed $t_0 > 0$. This condition on $t$ leads to
\begin{displaymath}
\sup_{(x,y)\in I\times J}p_t(x,y)\leqslant p_0 :=
\overline{\mathfrak c}_Tt_{0}^{-\frac{1}{2}}
\quad\textrm{by Inequality (\ref{Gaussian_bound_density}).}
\end{displaymath}
The following model selection criterion, simpler than (\ref{model_selection_criterion}), may be considered:
\begin{equation}\label{model_selection_criterion_bis}
\widetilde{\bf m} =\widetilde{\bf m}(t) :=
\arg\min_{\mathbf m\in\widehat{\mathcal M}_N}
\{-\|\widehat p_{\mathbf m,t}\|_{N}^{2} + 2\kappa_b{\rm pen}_b(\mathbf m)\},
\end{equation}
where $\kappa_b\geqslant\kappa_{b,0}$, $\kappa_{b,0} > 0$ is defined later, and
\begin{displaymath}
{\rm pen}_b(\mathbf m) :=
\frac{m_1\mathfrak L_{\psi}(m_2)}{N}
\textrm{ $;$ }
\forall\mathbf m = (m_1,m_2)\in\{1,\dots,N_T\}^2.
\end{displaymath}
In the sequel, the map $(m_1,m_2)\mapsto m_1\mathfrak L_{\psi}(m_2)$ fulfills the following additional but usual condition.
%


%
\begin{assumption}\label{condition_m_bis}
For every $\xi > 0$, there exists $S(\xi) > 0$, not depending on $N$, such that
\begin{displaymath}
\sum_{1\leqslant m_1,m_2\leqslant N}
\exp(-\xi m_1\mathfrak L_{\psi}(m_2))\leqslant S(\xi) <\infty.
\end{displaymath}
\end{assumption}
\noindent
Assumption \ref{condition_m_bis} means that the complexity of the models collection is not too high. Note that if $\mathfrak L_{\psi}(m_2) = 1\vee (\mathfrak c_{\psi}m_2)$, or even if $\mathfrak L_{\psi}(m_2) = 1\vee (\mathfrak c_{\psi}\sqrt m_2)$, then $(m_1,m_2)\mapsto m_1\mathfrak L_{\psi}(m_2)$ fulfills Assumption \ref{condition_m_bis} because
\begin{displaymath}
\sum_{m_1 = 1}^{N}\exp(-\xi\mathfrak L_{\psi}(m_2))^{m_1}
\leqslant\frac{e^{-\xi\mathfrak L_{\psi}(m_2)}}{1 - e^{-\xi}}
\textrm{ $;$ }\forall m_2\in\mathbb N^*.
\end{displaymath}
So, for instance, $(m_1,m_2)\mapsto m_1\mathfrak L_{\psi}(m_2)$ fulfills Assumption \ref{condition_m_bis} when $\mathcal S_{\psi,m_2}$ is generated by the trigonometric or the Hermite basis.
%


%
\begin{proposition}\label{risk_bound_adaptive_estimator_bis}
Under Assumptions \ref{assumption_projection_spaces} and \ref{condition_m_bis}, for $\kappa_{b,0} = 16.5$, there exists a constant $\mathfrak c_{\ref{risk_bound_adaptive_estimator_bis}} > 0$, not depending on $N$, such that for every $t\in [t_0,T]$ with $t_0 > 0$,
\begin{displaymath}
\mathbb E(\|\widehat p_{\widetilde{\bf m},t} - p_{I\times J,t}\|_{N}^{2})
\leqslant
6\min_{\mathbf m\in\mathcal M_N}\{\mathbb E(\|\widehat p_{\mathbf m,t} - p_{I\times J,t}\|_{N}^{2}) +
\kappa_b{\rm pen}_b(\mathbf m)\} +
\frac{\mathfrak c_{\ref{risk_bound_adaptive_estimator_bis}}(1 + t_0^{-1/2})}{N}.
\end{displaymath}
\end{proposition}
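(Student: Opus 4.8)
The plan is to follow closely the strategy used to prove Theorem \ref{risk_bound_adaptive_estimator}, but to exploit the uniform bound $\sup_{(x,y)\in I\times J}p_t(x,y)\leqslant p_0 =\overline{\mathfrak c}_Tt_0^{-1/2}$ available when $t\geqslant t_0$ in order to remove the $\log(N)$ factor from the penalty. First I would reproduce the deterministic inequality (\ref{risk_bound_adaptive_estimator_6}), now with ${\rm pen}$ replaced by ${\rm pen}_b$: starting from the definition (\ref{model_selection_criterion_bis}) of $\widetilde{\bf m}$, the comparison $-\|\widehat p_{\widetilde{\bf m},t}\|_N^2+2\kappa_b{\rm pen}_b(\widetilde{\bf m})\leqslant-\|\widehat p_{\bf m,t}\|_N^2+2\kappa_b{\rm pen}_b(\bf m)$ for any $\mathbf m\in\widehat{\mathcal M}_N$, together with the identity $\widehat\Pi_{\bf m}(\widehat p_{\mathbf M,t})=\widehat p_{\bf m,t}$ on nested spaces and the Pythagorean decomposition in the empirical norm, yields
\begin{equation*}
\|\widehat p_{\widetilde{\bf m},t} - p_{I\times J,t}\|_{N}^{2}\leqslant
6\|\widehat p_{\mathbf m,t} - p_{I\times J,t}\|_{N}^{2} +
4\kappa_b{\rm pen}_b(\mathbf m) + 11\left(
\|\widehat p_{\widetilde{\bf m},t} -\widehat{\Pi}_{\widetilde{\bf m}}(p_t)\|_{N}^{2} -
\tfrac{2}{11}\kappa_b{\rm pen}_b(\widetilde{\bf m})\right)_+.
\end{equation*}
The value $\kappa_{b,0}=16.5$ (i.e. $2\kappa_{b,0}=33$, with $\tfrac{2}{11}\cdot 33=6$) is precisely what makes the bookkeeping of the constants $6$, $4\kappa_b$ and $11$ close.

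Next I would take expectation after restricting to the high-probability event $\Xi_N\cap\Omega_N$, using (\ref{risk_bound_adaptive_estimator_7}) so that $\mathbb P((\Xi_N\cap\Omega_N)^c)\lesssim N^{-(p-1)}$; on the complementary event the contribution is negligible because, as in the proof of Theorem \ref{risk_bound_adaptive_estimator}, one bounds $\|\widehat p_{\widetilde{\bf m},t}\|_f^2={\rm trace}(\Psi_{m_1}\widehat\Theta\widehat\Theta^*)$ and controls the residual by $1/N$ times a factor that, for $t\geqslant t_0$, is dominated by $p_0^2\lesssim t_0^{-1}$, explaining the $(1+t_0^{-1/2})$ term in the stated remainder. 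On $\Xi_N$ one has $\mathcal M_N\subset\widehat{\mathcal M}_N\subset\mathfrak M_N$, and using Equality (\ref{empirical_risk_bound_3}) together with the norm equivalence on $\Omega_N$ (so $\|\tau\|_N\leqslant 1\Rightarrow\|\tau\|_f\leqslant\sqrt 2$) the last term is bounded by
\begin{equation*}
11\sum_{\mathbf m\in\mathfrak M_N}
\mathbb E\left[\left(\sup_{\tau\in\mathcal S_{\bf m} :\|\tau\|_f\leqslant 2}\nu_N(\tau)^2
-\tfrac{2}{11}\kappa_b{\rm pen}_b(\mathbf m)\right)_+\right],
\end{equation*}
where $\nu_N$ is the centered empirical process introduced before Theorem \ref{risk_bound_adaptive_estimator}.

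The crux is then to show this sum is $O(1/N)$ using the Klein--Rio version of Talagrand's inequality, and this is where the uniform bound $p_0$ buys back the $\log N$. The key point of difference with Theorem \ref{risk_bound_adaptive_estimator} is the computation of the three Talagrand quantities $(H,v,b)$ for $\nu_N$ on the $\|\cdot\|_f$-ball of radius $2$ in $\mathcal S_{\bf m}$: the expectation/variance terms still scale like $m_1\mathfrak L_\psi(m_2)/N$, while the sup-norm constant $b$ of the increments is controlled through $p_0$ — the term $\int\tau(X_s,y)p_t(X_s,y)\,dy$ is bounded using $p_t\leqslant p_0$ on $I\times J$, which avoids the diverging $R(t)$ and the associated $\log N$ inflation that forced the larger penalty (\ref{penalty}) in the general case. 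One obtains an exponential deviation bound of the form $\exp(-c\,m_1\mathfrak L_\psi(m_2))$ for a positive $c$ once $\kappa_b\geqslant\kappa_{b,0}$, and Assumption \ref{condition_m_bis} guarantees that $\sum_{m_1,m_2}\exp(-\xi m_1\mathfrak L_\psi(m_2))\leqslant S(\xi)<\infty$, so the sum over $\mathfrak M_N$ is $O(1/N)$.

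I expect the main obstacle to be the precise calibration of the constants in the Talagrand bound, namely verifying that the chosen $\kappa_{b,0}=16.5$ makes the deviation exponent $\xi$ strictly positive uniformly over $\mathfrak M_N$ while keeping the residual sum summable; this requires carefully tracking how $p_0$, the norm-equivalence factor on $\Omega_N$, and the stability constant $\mathfrak d$ entering $\mathfrak V_N$ combine in the variance term $v$ and the sup-bound $b$. Once the deviation sum is shown to be $O((1+t_0^{-1/2})/N)$, assembling the pieces — the deterministic inequality, the negligible events, and the Talagrand sum — yields the claimed bound with the oracle constant $6$ and remainder $\mathfrak c_{\ref{risk_bound_adaptive_estimator_bis}}(1+t_0^{-1/2})/N$, and no further $\log N$ loss.
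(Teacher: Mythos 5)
Your overall architecture matches the paper's: the proof of Proposition \ref{risk_bound_adaptive_estimator_bis} is indeed a rerun of the proof of Theorem \ref{risk_bound_adaptive_estimator} with ${\rm pen}_b$ in place of ${\rm pen}$ — same deterministic oracle inequality, same events $\Xi_N\cap\Omega_N$, same reduction to a sum over $\mathfrak M_N$ of Talagrand deviation terms, with Assumption \ref{condition_m_bis} providing summability — and you correctly identify that the remainder $(1+t_0^{-1/2})/N$ comes from bounding $R(t)$ by $\overline{\mathfrak c}_Tt_0^{-1/2}$ when $t\geqslant t_0$.

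However, you misplace the one point where the argument actually changes, and it is the crux. You claim that the uniform bound $p_t\leqslant p_0$ is used to control the sup-norm constant of the increments, while "the expectation/variance terms still scale like $m_1\mathfrak L_\psi(m_2)/N$". It is the other way around. The sup-norm bound $M$ over $\mathcal F=\{\tau\in\mathcal S_{\bf m}:\|\tau\|_{f}^{2}\leqslant 2\}$ cannot be improved by $p_0$: one still has only $\|\tau\|_{\infty}\leqslant\sqrt 2\,\mathfrak L_{\varphi}(m_1)^{1/2}\mathfrak L_{\psi}(m_2)^{1/2}\|\Psi_{m_1}^{-1}\|_{\rm op}^{1/2}$, controlled through the stability condition exactly as in the general case (the paper takes $M\asymp m_{1}^{1/4}\mathfrak L_{\psi}(m_2)^{1/2}N^{1/4}$ on $\mathfrak M_N$). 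What $p_0$ improves is the variance proxy $\upsilon$: since
\begin{displaymath}
\mathbb E(\tau(X_s,X_{s+t})^2)=\int\int\tau(x,y)^2p_t(x,y)p_s(x_0,x)\,dx\,dy
\quad\textrm{averages to}\quad
\int\int\tau(x,y)^2p_t(x,y)f(x)\,dx\,dy\leqslant p_0\|\tau\|_{f}^{2}\leqslant 2p_0,
\end{displaymath}
one may take $\upsilon=2p_0$, a constant, instead of the crude $\upsilon=NH^2=2m_1\mathfrak L_{\psi}(m_2)$ used in Theorem \ref{risk_bound_adaptive_estimator}. This is what turns the first Talagrand term $\frac{24\upsilon}{N}\exp(-\alpha NH^2/(6\upsilon))$ into $\frac{48p_0}{N}\exp(-m_1\mathfrak L_{\psi}(m_2)/(24p_0))$ with a fixed $\alpha=1/4$, which is summable over $\mathbf m$ precisely by Assumption \ref{condition_m_bis}; with $\upsilon=NH^2$ the exponent is a constant in $\mathbf m$ and one is forced to take $\alpha\propto\log(N)$, which is the origin of the $\log(N)$ in (\ref{penalty}). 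If you carry out your plan as written — bounding only $\int\tau(\cdot,y)p_t(\cdot,y)dy$ by $p_0$ and leaving the variance proxy of order $m_1\mathfrak L_{\psi}(m_2)$ — the deviation sum over $\mathfrak M_N$ does not close without reintroducing the $\log(N)$ penalty. (Relatedly, the $\log(N)$ in the general penalty is not caused by the divergence of $R(t)$ as $t\to 0$; $R(t)$ only enters the additive remainder.)
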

\noindent
Now, let us briefly present the second interesting special case by assuming that $I$ is a compact interval. As already established in Section \ref{section_assumptions}, by Proposition \ref{properties_transition}.(2), there exists $\underline{\mathfrak m} > 0$ such that $f(\cdot)\geqslant\underline{\mathfrak m}$ on $I$. This implies that $\|\Psi_{m_1}^{-1}\|_{\rm op}\leqslant 1/\underline{\mathfrak m}$. So, for $t\in (0,T]$ (resp. $t\in [t_0,T]$ with a fixed $t_0 > 0$), the model selection criterion (\ref{model_selection_criterion}) (resp. (\ref{model_selection_criterion_bis})) may be simplified in another way:
\begin{eqnarray*}
 \widehat{\bf m}^* & = &
 \arg\min_{\mathbf m\in\mathcal M_{N}^{*}}
 \{-\|\widehat p_{\mathbf m,t}\|_{N}^{2} + 2\kappa{\rm pen}(\mathbf m)\}\\
 & &
 \hspace{3cm}
 \textrm{(resp. }\widetilde{\bf m}^* =
 \arg\min_{\mathbf m\in\mathcal M_{N}^{*}}
 \{-\|\widehat p_{\mathbf m,t}\|_{N}^{2} + 2\kappa_b{\rm pen}_b(\mathbf m)\}),
\end{eqnarray*}
where $\mathcal M_{N}^{*} :=\mathcal U_N\cap (\mathcal V_{N}^{*}\times\mathcal N)$ and
\begin{displaymath}
\mathcal V_{N}^{*} :=
\left\{m_1\in\mathcal N : m_1\leqslant
\frac{\mathfrak c_{\Lambda}}{2\mathfrak c_{\varphi}^{2}
(\underline{\mathfrak m}^{-1}\vee 1)}\cdot\frac{NT}{\log(NT)}\right\}.
\end{displaymath}
A result similar to Theorem \ref{risk_bound_adaptive_estimator} (resp. Proposition \ref{risk_bound_adaptive_estimator_bis}) may be established on the adaptive estimator $\widehat p_{\widehat{\bf m}^*,t}$ (resp. $\widehat p_{\widetilde{\bf m}^*,t}$) by taking $\widehat{\mathcal M}_N =\mathcal M_N =\mathfrak M_N :=\mathcal M_{N}^{*}$ in the proof (of Theorem \ref{risk_bound_adaptive_estimator} (resp. Proposition \ref{risk_bound_adaptive_estimator_bis})). This means that the collection in which $ \widehat{\bf m}^*$ or $ \widetilde{\bf m}^*$ are selected is larger and not random.
%


%
\section{Numerical experiments}\label{section_numerical_experiments}
We propose a brief simulation study to illustrate our estimation method. The implementation is done using the Hermite basis defined by (\ref{Hermite_basis}) ($I = J =\mathbb R$). The Hermite polynomials are computed thanks to the recursion formula $H_{n + 1}(x) = 2xH_n(x) - 2nH_{n - 1}(x)$ with $H_0(x) = 1$ and $H_1(x) = x$ (see Abramowitz and Stegun \cite{AS64}, (22.7)).
\\
\\
We fix $t = 1$, so we can take the penalty involved in Proposition \ref{risk_bound_adaptive_estimator_bis}: ${\rm pen}_b(\mathbf m) = m_1\sqrt{m_2}/N$. Moreover, we choose $\kappa_b = 2$; a value obtained from preliminary calibration experiments. A cutoff test excludes the dimensions $m_1$ such that the largest eigenvalue of $\widehat\Psi_{m_1}^{-1}$ is too large (see Comte and Genon-Catalot \cite{CGC20}).
\\
\\
We simulate discrete samples in three models, obtained from an exact discretization of $d$-dimensional Ornstein-Uhlenbeck processes $U_1,\dots,U_N$:
\begin{equation}\label{O-U}
dU_i(t) =
-\frac{r}{2}U_i(t)dt +
\frac{\gamma}{2}dW_{i,d}(t),
\quad U_i(0)\sim\mathcal N_d\left(0,\frac{\gamma^2}{4r}I_d\right),
\end{equation}
where $W_{i,d}$ is a $d$-dimensional standard Brownian motion. An exact simulation is generated with step $\Delta > 0$ by computing
\begin{displaymath}
U_i((k + 1)\Delta) =
e^{-\frac{r\Delta}{2}}U_i(k\Delta) +
\varepsilon_i((k + 1)\Delta),\quad
\varepsilon_i(k\Delta)\sim_{\rm iid}
\mathcal N_d\left(0,\frac{\gamma^2(1 - e^{-r\Delta})}{4r}I_d\right).
\end{displaymath}
In all cases, we take $k\in\{0,\dots,n\}$ with $n = 1000$, $\Delta = 0.01$, and as already mentioned, we fix $t = 1$ in the function $(x,y)\mapsto p_t(x,y)$ to estimate.
\\
\\
{\bf Example 1.} $X_i(t) = U_i(t)$, where $U_i(t)$ is defined by (\ref{O-U}) with $d = 1$. The $X_i$'s are independent copies of the solution of Equation (\ref{main_equation}) with
\begin{displaymath}
b(x) = -\frac{rx}{2},\quad
\sigma(x) =\frac{\gamma}{2},\quad
r = 2\quad {\rm and}\quad\gamma = 2.
\end{displaymath}
Here, the transition density function is given by
\begin{displaymath}
p_{t}^{(1)}(x,y) =
\sqrt{\frac{2r}{\pi\gamma^2(1 - e^{-rt})}}
\exp\left(-\frac{2r}{\gamma^2(1 - e^{-rt})}(y - xe^{-\frac{rt}{2}})^2\right).
\end{displaymath}
\begin{figure}[h!]
\includegraphics[width=14cm,height=5cm]{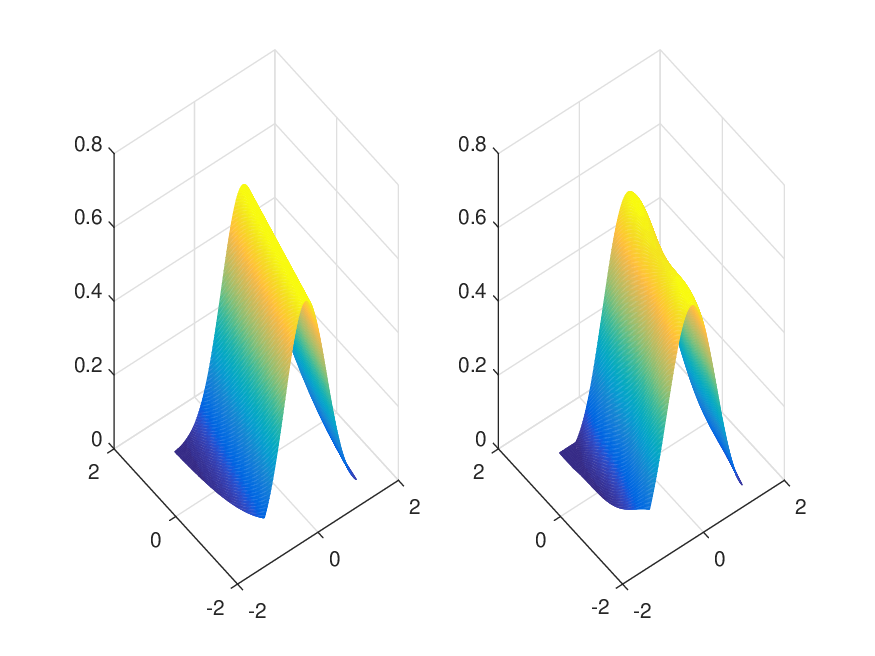}
\caption{Example 1. Transition density (left) and the estimation (right). Selected dimensions (4,5), 100*MISE = 0.22. $N = 200$, $T = 10$, $\Delta = 0.01$, $t = 1$.}
\label{Ex1D3}
\end{figure}
\begin{figure}[h!]
\includegraphics[width=12cm,height=8cm]{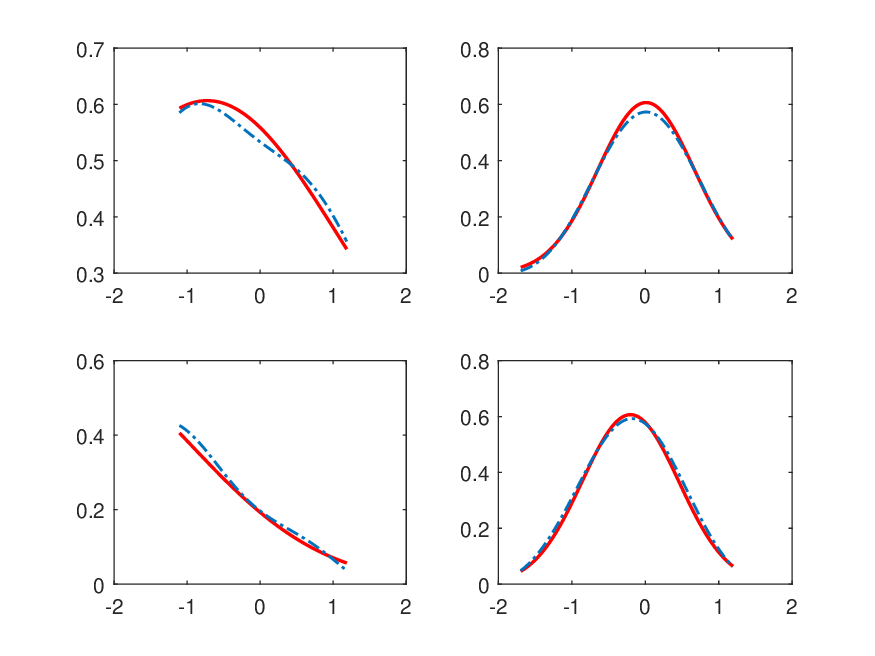}
\caption{Example 1. Full red line, the true and the estimation in dotted blue. Left: $x\mapsto p_t(x,y)$ for a fixed value of $y$ ($y = -0.27$ top and $y = -1$ bottom). Right: $y\mapsto p_t(x,y)$ for a fixed value of $x$ ($x = 0.03$ top and $x = -0.55$ bottom). $N = 200$, $T = 10$, $\Delta = 0.01$, $t = 1$.}
\label{Ex1Cut}
\end{figure}
\newline
{\bf Example 2.} $X_i(t) =\tanh(U_i(t))$, where $U_i(t)$ is defined by (\ref{O-U}) with $d = 1$. The $X_i$'s are independent copies of the solution of Equation (\ref{main_equation}) with
\begin{displaymath}
b(x) = (1 - x^2)\left(-\frac{r}{2}{\rm atanh}(x) -\frac{\gamma^2}{4}x\right),\quad
\sigma(x) =\frac{\gamma}{2}(1 - x^2),\quad r = 4\quad {\rm and}\quad\gamma = 1.
\end{displaymath}
Here, the transition density function is given by
\begin{displaymath}
p_{t}^{(2)}(x,y) =
\frac{p_{t}^{(1)}({\rm atanh}(x),{\rm atanh}(y))}{1 - y^2}.
\end{displaymath}
\begin{figure}[h!]
\includegraphics[width=14cm,height=5cm]{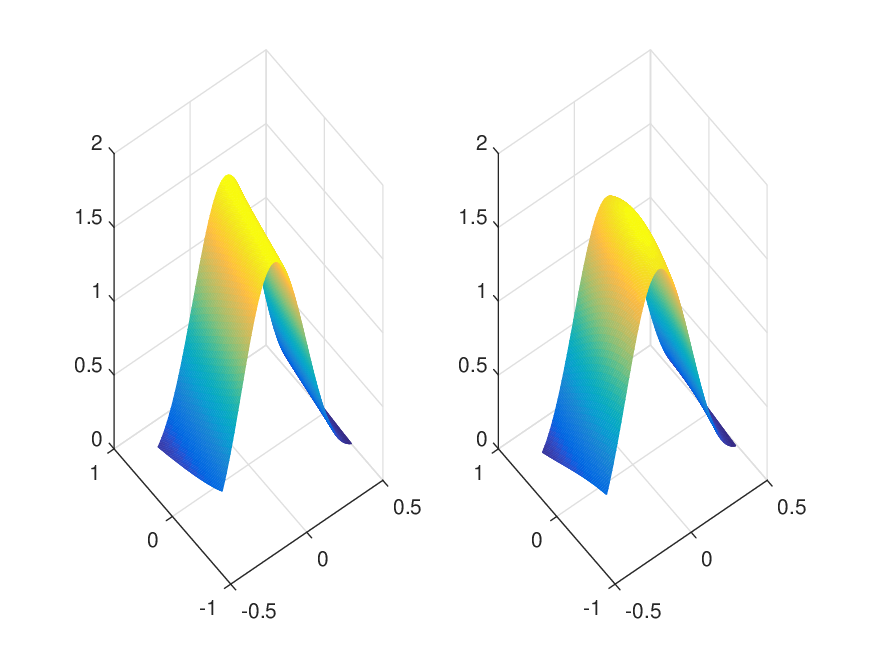}
\caption{Example 2. Transition density (left) and the estimation (right). Selected dimensions (2,41), 100*MISE = 0.16. $N = 200$, $T = 10$, $\Delta = 0.01$, $t = 1$.}
\label{Ex2D3}
\end{figure}
\newline
{\bf Example 3.} (Cox-Ingersoll-Ross or square-root process) $X_i(t) =\|U_i(t)\|_{2,d}^{2}$, where $U_i(t)$ is defined by (\ref{O-U}) with $d = 6$. The $X_i$'s are independent copies of the solution of Equation (\ref{main_equation}) with
\begin{displaymath}
b(x) =\frac{d\gamma^2}{4} - rx,\quad
\sigma(x) =\gamma\sqrt x,\quad
r = 1\quad {\rm and}\quad\gamma = 1.
\end{displaymath}
Here, the transition density function is given by
\begin{eqnarray*}
 p_{t}^{(3)}(x,y) & = &
 c_t\exp(-c_t(xe^{-rt} + y))\\
 & &
 \hspace{1cm}
 \times\left(\frac{y}{xe^{-rt}}\right)^{\frac{d}{4} -\frac{1}{2}}
 \mathcal I\left(\frac{d}{2} -1,2c_t\sqrt{xye^{-rt}}\right),
 \quad {\rm where}\quad
 c_t :=\frac{2r}{\gamma^2(1 - e^{-rt})}
\end{eqnarray*}
and $\mathcal I(p,x)$ is the modified Bessel function of the first kind of order $p$ at point $x$ (see the formula (20) in A\"it-Sahalia \cite{A99}).
\begin{figure}[h!]
\includegraphics[width=14cm,height=5cm]{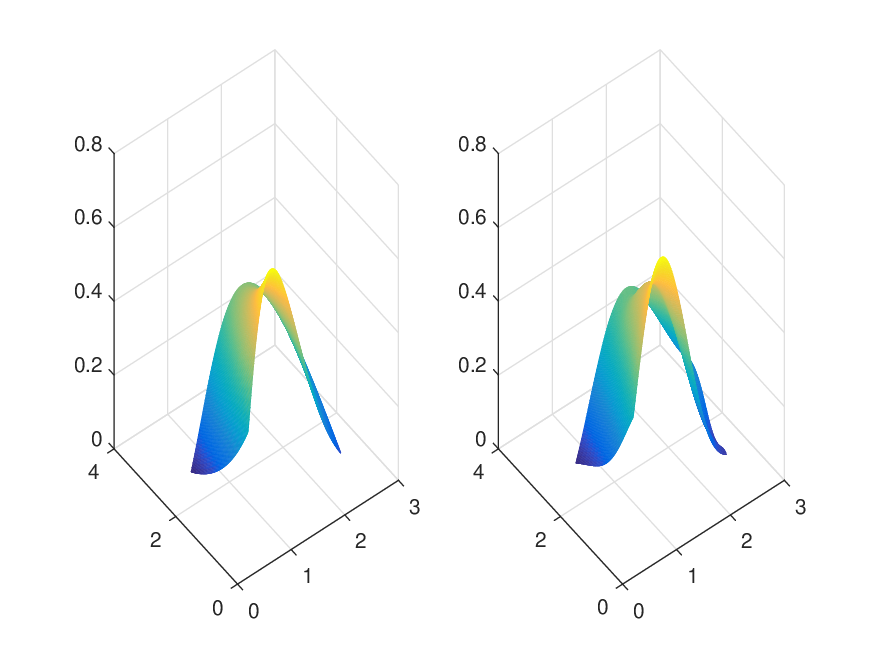}
\caption{Example 3. Transition density (left) and the estimation (right). Selected dimensions (6,9), 100*MISE = 0.29. $N = 200$, $T = 10$, $\Delta = 0.01$, $t = 1$.}
\label{Ex3D3}
\end{figure}
\begin{figure}[h!]
\includegraphics[width=14cm,height=5cm]{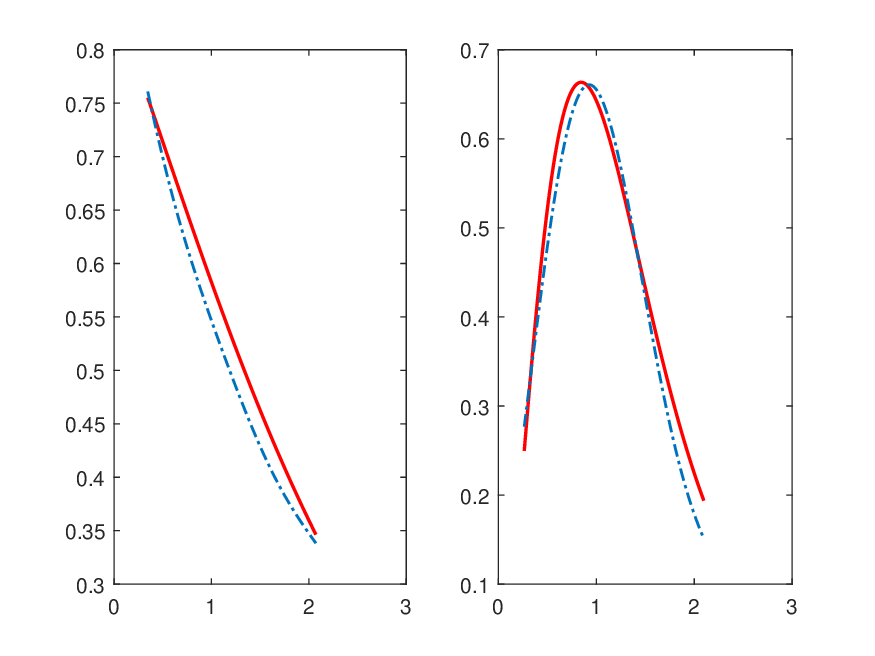}
\caption{Example 3. Full red line, the true and the estimation in dotted blue. Left: $x\mapsto p_t(x,y)$ for a fixed value of $y$ ($y = 0.71$). Right: $y\mapsto p_t(x,y)$ for a fixed value of $x$ ($x = 0.76$). $N = 200$, $T = 10$, $\Delta = 0.01$, $t = 1$.}
\label{Ex3Cut25}
\end{figure}
\newline
\newline
Figures \ref{Ex1D3}, \ref{Ex2D3} and \ref{Ex3D3} show the true surface and the estimated one for Examples 1, 2 and 3 respectively. We see that the {\it shape} of the estimated transition density is very similar to the true one, and the selected dimensions can be of any orders, especially in the $y$-direction (see Figure \ref{Ex2D3}, where the selected couple of dimensions is (2,41)). Figures \ref{Ex1Cut} and \ref{Ex3Cut25} represent sections of the curve for fixed values of $y$ or $x$ for the same simulated paths as in Figure \ref{Ex1D3} and \ref{Ex3D3} respectively. One may notice that the estimation in the $y$-direction is better than in the $x$-direction.
\begin{center}
\begin{table}
\begin{tabular}{cl|ccc}
 Model & & $N = 100$ & $N = 400$ & $N = 1000$\\
 \hline 
 & & & & \\
 1 & MISE & 1.62 (2.64) & 0.48 (0.99) & 0.18 (0.40)\\
 & Medians & 0.76 & 0.21 & 0.10\\
 & Dim (10,12) & (3.05,4.91) & (4.23,5.92) & (5.00,7.00)\\
 & & & & \\
 3 & MISE & 6.80 (10.5) & 2.14 (5.62) & 1.27 (5.62)\\
 & Medians & 1.65 & 0.34 & 0.19\\
 & dim (12,15) & (5.01, 6.68) & (7.07, 9.07) & (8.94, 12.2)\\
 & & & &
\end{tabular}
\caption{\footnotesize{Line MISE: 100*MISE (with 100*standard deviation) computed over 200 repetitions, by (\ref{MISECalcul}). Line Medians: Median values of 100*MISE. Line Dim with maximal proposals for $(m_1,m_2)$: means of the selected couples ($\widehat m_1,\widehat m_2)$.}}
\label{MISE}
\end{table}
\end{center}
In Table \ref{MISE}, we compute normalized squared errors for Models 1 and 3, which are defined by
\begin{equation}\label{MISECalcul}
{\rm MISE} =
\frac{\frac{1}{K}\sum_{k = 1}^{K}\frac{DXY^{(k)}}{N_IN_J}\sum_{i = 1}^{N_I}\sum_{j = 1}^{N_J}
(p_t(x_{i}^{(k)},y_{j}^{(k)}) -\widehat p_{t}^{(k)}(x_{i}^{(k)},y_{j}^{(k)}))^2}{\frac{DXY^{(K)}}{N_IN_J}
\sum_{i = 1}^{N_I}\sum_{j = 1}^{N_J}p_{t}^{2}(x_{i}^{(K)},y_{j}^{(K)})},
\end{equation}
where
\begin{displaymath}
DXY^{(k)} := (bX^{(k)} - aX^{(k)})(bY^{(k)} - aY^{(k)}),
\end{displaymath}
$bX^{(k)}$ and $aX^{(k)}$ are the 98\% and 2\% quantiles of the $X_t^{(k)}$'s, and $bY^{(k)}$ and $aY^{(k)}$ are the 99\% and 1\% quantiles of the $X_{t + 1}^{(k)}$'s. The super-index $k$ denotes the repetition number, the points $(x_i^{(k)},y_j^{(k)})$ are equispaced in the range of the observations of the path $k$, and we take $N_I = N_J = 100$ and $K=200$. We also give the associated standard deviation, together with the mean of the selected dimensions in each direction. To spare time of computation, we've adjusted maximal proposals for $m_1$ and $m_2$ to the choices corresponding to each example in such a way that the largest proposal is never selected (i.e. is always too large).\\
The results in Table \ref{MISE} show that, as could be expected, the error is getting smaller when $N$ increases, and in the same time, the selected dimensions are increasing. This is expected from Proposition \ref{empirical_risk_bound_Sobolev_Hermite}. We also notice that medians are much smaller in all cases than means: this indicates that the performance of the estimation is most of the time much better than what the mean indicates. Probably few bad results deteriorate the mean. The variability can be checked in the results to be unrelated to the selected dimensions, which are quite stable.
\\
\\
We have also implemented the (half)-trigonometric basis, or a product basis with $\varphi = t$ and $\psi = h$. They work well, with some questions on the nature of the theoretical bias term and underlying regularity spaces in the mixed ($t$ and $h$) case.
%


%
\section{Concluding remarks}\label{section_conclusion}
In this paper, for a fixed $t\in (0,T]$, we have proposed a least squares contrast estimator $\widehat p_{\mathbf m,t}(x,y)$ of the transition density $p_t(x,y)$ of the solution $X$ to Equation (\ref{main_equation}). The estimator is defined through the estimators of the coefficients of its development on a basis of a finite dimensional space $\mathcal S_{\bf m} =\mathcal S_{\varphi,m_1}\otimes\mathcal S_{\psi,m_2}$, $\mathbf m = (m_1,m_2)$. In our observation setting, $N$ independent copies $X^1,\dots,X^N$ of $X$ are available. We provide an upper bound on a risk defined as the expectation of the empirical or integrated distance between $\widehat p_{{\mathbf m},t}$ and $p_t$, exhibiting a squared-bias/variance decomposition up to negligible terms. An adaptive procedure is then tailored to automatically select the couple $\mathbf m = (m_1,m_2)$, and it is proved to give good results, both in theory and in practice.\\
Of course, the simulation part immediately faces the question of handling high frequency but discrete samples and the topic may be developed. However, similarly to what happens in functional data analysis (FDA), small step samples often require a specific study from the beginning and can not be directly deduced from continuous-time constructions. In general, the continuous time results are extended to the high frequency one, up to some constraints linking the sample step and the number of observations. This would be worth being investigated. Note that FDA often considers both the influence of noise and discretization, which would mean here observations, for $i = 1,\dots,N$, of $X^i(t_j) +\eta_{i,j}$ with $t_j = j\Delta/n$, $j = 1,\dots,n$ and $\eta_{i,j}$ independent and identically distributed noises with common variance (see Chagny {\it et al.} \cite{CMR24}). It is not clear if both questions may be simultaneously solved in our setting.\\
Other questions may be: 1) To study functions related to the conditional density, the main example being the conditional cumulative distribution function; 2) A recent study by Amorino {\it et al.} \cite{AGH24} defines a concept of local differential privacy in the context of i.i.d. diffusion processes, and it may be worth studying the estimation of $f$ or $p_t$ under such confidentiality condition; (3) To extend the results of our paper - Theorems \ref{empirical_risk_bound}, \ref{f_weighted_risk_bound_truncated} and \ref{risk_bound_adaptive_estimator} - to an estimator minimizing a generalized objective function $\gamma_{d,N}$ for $d$-dimensional diffusion processes. As a matter of fact, if $X$ is a $d$-dimensional diffusion process with $d\in\mathbb N^*$, the objective function $\gamma_N$ is naturally extended by
\begin{equation}\label{multidimensional_objective_function}
\gamma_{d,N}(\tau) :=
\frac{1}{NT}\sum_{i = 1}^{N}\left(\int_{0}^{T}\int_{\mathbb R^d}\tau(X_{s}^{i},y)^2dyds -
2\int_{0}^{T}\tau(X_{s}^{i},X_{s + t}^{i})ds\right)
\textrm{ $;$ }
\tau\in\mathcal S_{\mathbf m_1,\dots,\mathbf m_d},
\end{equation}
where $\mathcal S_{\mathbf m_1,\dots,\mathbf m_d} :=\mathcal S_{\mathbf m_1}\otimes\dots\otimes\mathcal S_{\mathbf m_d}$ and $\mathbf m_1,\dots,\mathbf m_d\in\{1,\dots,N_T\}^2$. This is out of the scope of our paper, but note that one could get an expression of the solution of the minimization problem $\min_{\mathcal S_{\mathbf m_1,\dots,\mathbf m_d}}\gamma_{d,N}$ - similar to (\ref{definition_projection_LS}) - involving hyper-matrices in the spirit of Dussap \cite{DUSSAP21}. However, this requires the definition of additional tools, and if the theory can be generalized, the implementation becomes algorithmically difficult and a computer specialist matter.
\appendix
%


%
\section{Proofs}\label{section_proofs}
%


%
\subsection{Proof of Theorem \ref{empirical_risk_bound}}\label{section_proof_empirical_risk_bound}
The proof of Theorem \ref{empirical_risk_bound} relies on two technical lemmas stated first.
%


%
\begin{lemma}\label{Omega_event}
Consider the event
\begin{displaymath}
\Omega_{m_1} :=
\left\{\sup_{\tau\in\mathcal S_{\varphi,m_1}}
\left|\frac{\|\tau\|_{N,1}^{2}}{\|\tau\|_{f,1}^{2}} - 1\right|\leqslant\frac{1}{2}
\right\}
\end{displaymath}
where, for every $h\in\mathbb L^2(\mathbb R,f(x)dx)$,
\begin{displaymath}
\|h\|_{N,1}^{2} :=
\frac{1}{NT}\sum_{i = 1}^{N}\int_{0}^{T}h(X_{s}^{i})^2ds
\quad\textrm{and}\quad
\|h\|_{f,1}^{2} :=
\int_{-\infty}^{\infty}h(x)^2f(x)dx.
\end{displaymath}
Recall also that $\Lambda_{m_1} $ is defined by (\ref{event_Lambda}). Under Assumption \ref{condition_m}, there exists a constant $\mathfrak c_{\ref{Omega_event}} > 0$, not depending on $m_1$ and $N$, such that
\begin{displaymath}
\mathbb P(\Omega_{m_1}^{c})
\leqslant
\frac{\mathfrak c_{\ref{Omega_event}}}{N^p}
\quad\textrm{and}\quad
\mathbb P(\Lambda_{m_1}^{c})
\leqslant
\frac{\mathfrak c_{\ref{Omega_event}}}{N^p}
\quad {\rm with}\quad
p = 2(q + 4) + 1.
\end{displaymath}
\end{lemma}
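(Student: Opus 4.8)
The plan is to first reduce the control of $\Omega_{m_1}$ to a concentration inequality for the random Gram matrix $\widehat\Psi_{m_1}$ around its expectation $\Psi_{m_1}$, and then to deduce the bound on $\Lambda_{m_1}$ from the one on $\Omega_{m_1}$ by an elementary deterministic inclusion.

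First I would write, for $\tau =\sum_{j = 1}^{m_1}a_j\varphi_j\in\mathcal S_{\varphi,m_1}$ with coefficient vector $\mathbf a\in\mathbb R^{m_1}$, the identities $\|\tau\|_{N,1}^{2} =\mathbf a^*\widehat\Psi_{m_1}\mathbf a$ and $\|\tau\|_{f,1}^{2} =\mathbf a^*\Psi_{m_1}\mathbf a$, so that
$$\sup_{\tau\in\mathcal S_{\varphi,m_1}}\left|\frac{\|\tau\|_{N,1}^{2}}{\|\tau\|_{f,1}^{2}} - 1\right| =\left\|\Psi_{m_1}^{-1/2}(\widehat\Psi_{m_1} -\Psi_{m_1})\Psi_{m_1}^{-1/2}\right\|_{\rm op}.$$
Thus $\Omega_{m_1}^{c}$ is exactly the event on which this operator norm exceeds $1/2$. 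Writing $\Phi(x) := (\varphi_1(x),\dots,\varphi_{m_1}(x))^*$, the matrix inside is the average $\frac{1}{N}\sum_{i = 1}^{N}V_i$ of the i.i.d. centered matrices $V_i :=\Psi_{m_1}^{-1/2}\bigl(\frac{1}{T}\int_0^T\Phi(X_s^i)\Phi(X_s^i)^*\,ds -\Psi_{m_1}\bigr)\Psi_{m_1}^{-1/2}$.

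Second, I would apply a matrix Bernstein inequality to $\frac{1}{N}\sum_i V_i$. The deterministic operator-norm bound on each summand comes from $\|\frac{1}{T}\int_0^T\Phi(X_s^i)\Phi(X_s^i)^*ds\|_{\rm op}\leq\sup_x\sum_{j = 1}^{m_1}\varphi_j(x)^2 =\mathfrak L_{\varphi}(m_1)$ together with $\|\Psi_{m_1}^{-1/2}\|_{\rm op}^{2} =\|\Psi_{m_1}^{-1}\|_{\rm op}$, giving $\|V_i\|_{\rm op}\lesssim\mathfrak L_{\varphi}(m_1)(\|\Psi_{m_1}^{-1}\|_{\rm op}\vee 1)$; the variance proxy is controlled in the same way. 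The stability condition (\ref{stability_condition}) in Assumption \ref{condition_m} bounds precisely this quantity by $\frac{\mathfrak c_{\Lambda}}{2}\cdot\frac{NT}{\log(NT)}$, which is exactly what is needed so that the Bernstein tail, evaluated at level $1/2$, decays like $N^{-p}$; the explicit value $\mathfrak c_{\Lambda} = (1 -\log 2)/((1 + p)T)$ with $p = 2(q + 4) + 1$ is tuned to produce this exponent. Since the setting (i.i.d. copies of a continuous diffusion, same renormalized Gram matrix) is identical to that of Comte and Genon-Catalot \cite{CGC20}, I would invoke their Lemma 6.1 to obtain $\mathbb P(\Omega_{m_1}^{c})\leq\mathfrak c_{\ref{Omega_event}}/N^p$.

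Third, I would derive the bound on $\Lambda_{m_1}$ from the inclusion $\Omega_{m_1}\subset\Lambda_{m_1}$. On $\Omega_{m_1}$ one has $\frac{1}{2}\Psi_{m_1}\preceq\widehat\Psi_{m_1}$, hence $\lambda_{\min}(\widehat\Psi_{m_1})\geq\frac{1}{2}\lambda_{\min}(\Psi_{m_1})$ and $\|\widehat\Psi_{m_1}^{-1}\|_{\rm op}\leq 2\|\Psi_{m_1}^{-1}\|_{\rm op}$. Therefore $\mathfrak L_{\varphi}(m_1)(\|\widehat\Psi_{m_1}^{-1}\|_{\rm op}\vee 1)\leq 2\mathfrak L_{\varphi}(m_1)(\|\Psi_{m_1}^{-1}\|_{\rm op}\vee 1)\leq\mathfrak c_{\Lambda}\frac{NT}{\log(NT)}$ by (\ref{stability_condition}), which is the defining inequality of $\Lambda_{m_1}$ in (\ref{event_Lambda}). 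Consequently $\Lambda_{m_1}^{c}\subset\Omega_{m_1}^{c}$, and the second bound follows from the first with the same constant. The main obstacle is the concentration step, namely establishing the matrix Bernstein bound with the correct dependence on $\mathfrak L_{\varphi}(m_1)$ and $\|\Psi_{m_1}^{-1}\|_{\rm op}$ and extracting the precise exponent $p$; this is exactly the content of \cite{CGC20}, Lemma 6.1, so the real work is to check that our normalization and hypotheses match theirs and that the stability condition (\ref{stability_condition}) feeds the inequality correctly.
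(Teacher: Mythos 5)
Your proposal is correct and takes essentially the same route as the paper, whose entire proof of this lemma is the citation "See Comte and Genon-Catalot \cite{CGC20}, Lemma 6.1": you simply unpack what that lemma does (concentration of the renormalized Gram matrix $\Psi_{m_1}^{-1/2}\widehat\Psi_{m_1}\Psi_{m_1}^{-1/2}$ under the stability condition (\ref{stability_condition})) and make explicit the deterministic inclusion $\Omega_{m_1}\subset\Lambda_{m_1}$, which is also how the second bound is obtained in the cited reference. Both your matrix identities and the deduction $\|\widehat\Psi_{m_1}^{-1}\|_{\rm op}\vee 1\leqslant 2(\|\Psi_{m_1}^{-1}\|_{\rm op}\vee 1)$ on $\Omega_{m_1}$ are sound.
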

\noindent
See Comte and Genon-Catalot \cite{CGC20}, Lemma 6.1 for a proof. Now, let us introduce two additional empirical maps:
\begin{itemize}
 \item The empirical process $\nu_N$, defined by
 \begin{displaymath}
 \nu_N(\tau) :=
 \frac{1}{NT}\sum_{i = 1}^{N}\int_{0}^{T}\left(
 \tau(X_{s}^{i},X_{s + t}^{i}) -\int_{-\infty}^{\infty}\tau(X_{s}^{i},y)p_t(X_{s}^{i},y)dy\right)ds
 \end{displaymath}
 for every $\tau\in\mathcal S_{\bf m}$. Note that for $\widehat Z_{\mathbf m,t}$ defined by (\ref{Z_vector}), it holds
 \begin{equation}\label{Z_empirical_process_relationship}
 [\widehat Z_{\mathbf m,t}]_{j,\ell} =
 \frac{1}{NT}\sum_{i = 1}^{N}
 \int_{0}^{T}\varphi_j(X_{s}^{i})\psi_{\ell}(X_{s + t}^{i})ds =
 \langle p_t,\varphi_j\otimes\psi_{\ell}\rangle_N +
 \nu_N(\varphi_j\otimes\psi_{\ell})
 \end{equation}
 for every $j\in\{1,\dots,m_1\}$ and $\ell\in\{1,\dots,m_2\}$.
 \item The empirical orthogonal projection $\widehat\Pi_{\bf m}$, defined by
 \begin{equation}\label{empirical_orthogonal_projection}
 \widehat\Pi_{\bf m}(\cdot)\in
 \arg\min_{\tau\in\mathcal S_{\bf m}}
 \|\tau -\cdot\|_{N}^{2}.
 \end{equation}
 For any function $h$ from $\mathbb R^2$ into $\mathbb R$,
 \begin{eqnarray*}
  \widehat\Pi_{\bf m}(h) & = &
  \sum_{j = 1}^{m_1}\sum_{\ell = 1}^{m_2}
  [\widehat\Psi_{m_1}^{-1}\widehat P_{\bf m}(h)]_{j,\ell}(\varphi_j\otimes\psi_{\ell})\\
  & &
  \hspace{1.5cm}{\rm with}\quad
  \widehat P_{\bf m}(h) = (\langle h,\varphi_j
  \otimes\psi_{\ell}\rangle_N)_{(j,\ell)\in\{1,\dots,m_1\}\times\{1,\dots,m_2\}}.
 \end{eqnarray*}
\end{itemize}
%


%
\begin{lemma}\label{empirical_risk_bound_key_lemma}
For every $t\in [0,T]$,
\begin{displaymath}
\|\widehat p_{\mathbf m,t} -\widehat\Pi_{\bf m}(p_t)\|_{N}^{2} =
\sup_{\tau\in\mathcal S_{\bf m} :\|\tau\|_N = 1}\nu_N(\tau)^2.
\end{displaymath}
\end{lemma}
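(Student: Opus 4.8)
The plan is to reduce everything to the coefficient matrices in $\mathcal M_{m_1,m_2}(\mathbb R)$ and to recognise $\widehat p_{\mathbf m,t}-\widehat\Pi_{\bf m}(p_t)$ as the Riesz representer of the linear form $\nu_N$ for the empirical inner product $\langle\cdot,\cdot\rangle_N$ on $\mathcal S_{\bf m}$; the claimed identity then follows from a one-line Cauchy--Schwarz argument. First I would record the coefficient matrices of the two estimators. By (\ref{definition_projection_LS}), $\widehat p_{\mathbf m,t}$ has coefficient matrix $\widehat\Theta_{\mathbf m,t}=\widehat\Psi_{m_1}^{-1}\widehat Z_{\mathbf m,t}$, while by the explicit formula for $\widehat\Pi_{\bf m}$ its coefficient matrix is $\widehat\Psi_{m_1}^{-1}\widehat P_{\bf m}(p_t)$. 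Setting $V:=(\nu_N(\varphi_j\otimes\psi_{\ell}))_{(j,\ell)}$ and using the decomposition (\ref{Z_empirical_process_relationship}), namely $\widehat Z_{\mathbf m,t}=\widehat P_{\bf m}(p_t)+V$, the difference $\widehat p_{\mathbf m,t}-\widehat\Pi_{\bf m}(p_t)$ therefore has coefficient matrix $\widehat\Psi_{m_1}^{-1}V$.

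Next I would establish two bilinear identities in terms of coefficients. Writing $\tau=\sum_{j,\ell}\Theta_{j,\ell}(\varphi_j\otimes\psi_{\ell})$ and $\tau^\star=\sum_{j,\ell}\Theta^\star_{j,\ell}(\varphi_j\otimes\psi_{\ell})$, integrating out $y$ using the orthonormality of $(\psi_1,\dots,\psi_{m_2})$ in $\mathbb L^2(J,dy)$ and then averaging over the paths via the definition (\ref{empirical_Psi_matrix}) of $\widehat\Psi_{m_1}$ gives
\begin{displaymath}
\langle\tau,\tau^\star\rangle_N={\rm trace}(\Theta^*\widehat\Psi_{m_1}\Theta^\star).
\end{displaymath}
On the other hand, by linearity of $\nu_N$,
\begin{displaymath}
\nu_N(\tau)=\sum_{j=1}^{m_1}\sum_{\ell=1}^{m_2}\Theta_{j,\ell}V_{j,\ell}={\rm trace}(\Theta^*V).
\end{displaymath}

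Combining these two identities with the coefficient matrix $\widehat\Psi_{m_1}^{-1}V$ of the difference, I would then obtain, for every $\tau\in\mathcal S_{\bf m}$ with coefficient matrix $\Theta$,
\begin{displaymath}
\langle\tau,\widehat p_{\mathbf m,t}-\widehat\Pi_{\bf m}(p_t)\rangle_N={\rm trace}(\Theta^*\widehat\Psi_{m_1}\widehat\Psi_{m_1}^{-1}V)={\rm trace}(\Theta^*V)=\nu_N(\tau),
\end{displaymath}
the cyclicity of the trace being the only algebraic fact used. Thus $\widehat p_{\mathbf m,t}-\widehat\Pi_{\bf m}(p_t)$ represents $\nu_N$ for $\langle\cdot,\cdot\rangle_N$, and the (sharp) Cauchy--Schwarz inequality, with equality attained at the representer normalised in $\|\cdot\|_N$, yields
\begin{displaymath}
\sup_{\tau\in\mathcal S_{\bf m}:\|\tau\|_N=1}\nu_N(\tau)^2=\|\widehat p_{\mathbf m,t}-\widehat\Pi_{\bf m}(p_t)\|_N^2,
\end{displaymath}
which is the assertion.

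The only point requiring care --- and the main (minor) obstacle --- is that both the supremum over $\{\|\tau\|_N=1\}$ and the Riesz representation presuppose that $\langle\cdot,\cdot\rangle_N$ is a genuine inner product on $\mathcal S_{\bf m}$, i.e. that $\widehat\Psi_{m_1}$ is invertible. This is exactly the regime in which $\widehat p_{\mathbf m,t}$ is defined (cf. (\ref{definition_projection_LS})), so one works on that event throughout. Everything else is routine bookkeeping of the coefficient matrices.
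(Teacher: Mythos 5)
Your proposal is correct and follows essentially the same route as the paper: both identify $\widehat p_{\mathbf m,t}-\widehat\Pi_{\bf m}(p_t)$ as the Riesz representer of $\nu_N$ for $\langle\cdot,\cdot\rangle_N$ on $\mathcal S_{\bf m}$ (via the coefficient matrix $\widehat\Psi_{m_1}^{-1}V$ and the Gram identity $\langle\varphi_j\otimes\psi_{\ell},\varphi_{j'}\otimes\psi_{\ell'}\rangle_N=[\widehat\Psi_{m_1}]_{j,j'}\delta_{\ell,\ell'}$) and then conclude by the equality case of Cauchy--Schwarz. The only difference is cosmetic: you phrase the bookkeeping with traces where the paper writes out the double sums.
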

\noindent
The proof of Lemma \ref{empirical_risk_bound_key_lemma} is postponed to Subsubsection \ref{section_proof_empirical_risk_bound_key_lemma}.
%


%
\subsubsection{Steps of the proof}
First of all, by the definition of $\widehat\Pi_{\bf m}$ (see (\ref{empirical_orthogonal_projection})),
\begin{displaymath}
\|\widehat p_{\mathbf m,t} - p_{I\times J,t}\|_{N}^{2} =
\min_{\tau\in\mathcal S_{\bf m}}\|\tau - p_{I\times J,t}\|_{N}^{2} +
\|\widehat p_{\mathbf m,t} -\widehat\Pi_{\bf m}(p_t)\|_{N}^{2},
\end{displaymath}
and by Lemma \ref{empirical_risk_bound_key_lemma},
\begin{eqnarray*}
 \mathbb E(\|\widehat p_{\mathbf m,t} -\widehat\Pi_{\bf m}(p_t)\|_{N}^{2}) & = &
 \mathbb E\left(\mathbf 1_{\Omega_{m_1}}
 \sup_{\tau\in\mathcal S_{\bf m} :\|\tau\|_N = 1}\nu_N(\tau)^2\right) +
 \mathbb E\left(\mathbf 1_{\Omega_{m_1}^{c}}
 \sup_{\tau\in\mathcal S_{\bf m} :\|\tau\|_N = 1}\nu_N(\tau)^2\right)\\
 & =: & A + B.
\end{eqnarray*}
Let us find suitable bounds on $A$ and $B$.
\\
\\
{\bf Step 1 (bound on $A$).} First, since
\begin{displaymath}
\|.\|_{f,1}^{2}\mathbf 1_{\Omega_{m_1}}\leqslant
2\|.\|_{N,1}^{2}\mathbf 1_{\Omega_{m_1}}
\quad {\rm on}\quad
\mathcal S_{\varphi,m_1}
\end{displaymath}
by the definition of $\Omega_{m_1}$, for every $\tau\in\mathcal S_{\bf m}$,
\begin{eqnarray*}
 \|\tau\|_{f}^{2}\mathbf 1_{\Omega_{m_1}} & = &
 \int_{-\infty}^{\infty}\|\tau(\cdot,y)\|_{f,1}^{2}\mathbf 1_{\Omega_{m_1}}dy\\
 & &\hspace{2cm}
 \leqslant
 2\int_{-\infty}^{\infty}\|\tau(\cdot,y)\|_{N,1}^{2}\mathbf 1_{\Omega_{m_1}}dy =
 2\|\tau\|_{N}^{2}\mathbf 1_{\Omega_{m_1}}.
\end{eqnarray*}
Then,
\begin{displaymath}
\{\tau\in\mathcal S_{\bf m} :\|\tau\|_N = 1\}
\subset\{\tau\in\mathcal S_{\bf m} :\|\tau\|_{f}^{2}\leqslant 2\}
\quad {\rm on}\quad
\Omega_{m_1},
\end{displaymath}
leading to
\begin{displaymath}
A\leqslant
\mathbb E\left(
\sup_{\tau\in\mathcal S_{\bf m} :\|\tau\|_{f}^{2}\leqslant 2}\nu_N(\tau)^2\right).
\end{displaymath}
Since $(\varphi_1,\dots,\varphi_{m_1})$ is an orthonormal family of $\mathbb L^2(\mathbb R,dx)$, $\varphi_1,\dots,\varphi_{m_1}$ are linearly independent, and one may consider the basis $(\varphi_{1}^{f},\dots,\varphi_{m_1}^{f})$ of $\mathcal S_{\varphi,m_1}$, orthonormal in $\mathbb L^2(\mathbb R,f(x)dx)$, obtained from $(\varphi_1,\dots,\varphi_{m_1})$ via the Gram-Schmidt process. Thus, $(\varphi_{j}^{f}\otimes\psi_{\ell})_{j,\ell}$ is an orthonormal basis of $\mathcal S_{\bf m}$ equipped with $\langle .,.\rangle_f$, and denoting by $\|.\|_{2,\mathbf m}$ the Fr\"obenius norm on $\mathcal M_{m_1,m_2}(\mathbb R)$,
\begin{displaymath}
A\leqslant
\mathbb E\left[\sup_{\Theta :
\|\Theta\|_{2,\mathbf m}^{2}\leqslant 2}\left(
\sum_{j = 1}^{m_1}\sum_{\ell = 1}^{m_2}
\Theta_{j,\ell}\nu_N(\varphi_{j}^{f}\otimes\psi_{\ell})\right)^2\right]\\
\leqslant
2\mathbb E\left(\sum_{j = 1}^{m_1}\sum_{\ell = 1}^{m_2}
\nu_N(\varphi_{j}^{f}\otimes\psi_{\ell})^2\right).
\end{displaymath}
Now, note that for every $j\in\{1,\dots,m_1\}$ and $\ell\in\{1,\dots,m_2\}$,
\begin{eqnarray*}
 \mathbb E(\nu_N(\varphi_{j}^{f}\otimes\psi_{\ell}))
 & = &
 \frac{1}{NT}
 \mathbb E\left(
 \sum_{i = 1}^{N}\int_{0}^{T}\varphi_{j}^{f}(X_{s}^{i})
 (\psi_{\ell}(X_{s + t}^{i}) -\mathbb E(\psi_{\ell}(X_{s + t}^{i})|X_{s}^{i}))ds\right)\\
 & = &
 \frac{1}{T}
 \int_{0}^{T}\mathbb E(\varphi_{j}^{f}(X_s)\psi_{\ell}(X_{s + t}) -
 \mathbb E(\varphi_{j}^{f}(X_s)\psi_{\ell}(X_{s + t})|X_s))ds = 0.
\end{eqnarray*}
Therefore, since $\|\varphi_{j}^{f}\|_f = 1$ for every $j\in\{1,\dots,m_1\}$,
\begin{eqnarray*}
 A & \leqslant &
 2\sum_{j = 1}^{m_1}\sum_{\ell = 1}^{m_2}{\rm var}(\nu_N(\varphi_{j}^{f}\otimes\psi_{\ell}))\\
 & \leqslant &
 \frac{2}{N}\sum_{j = 1}^{m_1}\sum_{\ell = 1}^{m_2}
 \mathbb E\left[\left(\frac{1}{T}\int_{0}^{T}\varphi_{j}^{f}(X_s)
 (\psi_{\ell}(X_{s + t}) -\mathbb E(\psi_{\ell}(X_{s + t})|X_s))ds\right)^2\right]\\
 & \leqslant &
 \frac{2}{NT}\sum_{j = 1}^{m_1}\sum_{\ell = 1}^{m_2}
 \int_{0}^{T}(\mathbb E(\varphi_{j}^{f}(X_s)^2\psi_{\ell}(X_{s + t})^2) +
 \mathbb E(\varphi_{j}^{f}(X_s)^2\mathbb E(\psi_{\ell}(X_{s + t})|X_s)^2)\\
 & &
 \hspace{4cm} -
 2\mathbb E(\varphi_{j}^{f}(X_s)\psi_{\ell}(X_{s + t})
 \mathbb E(\varphi_{j}^{f}(X_s)\psi_{\ell}(X_{s + t})|X_s)))ds\\
 & \leqslant &
 \frac{2}{NT}\sum_{j = 1}^{m_1}\sum_{\ell = 1}^{m_2}
 \int_{0}^{T}\mathbb E(\varphi_{j}^{f}(X_s)^2\psi_{\ell}(X_{s + t})^2)ds
 \leqslant
 \frac{2m_1\mathfrak L_{\psi}(m_2)}{N}.
\end{eqnarray*}
{\bf Step 2 (bound on $B$).} Since $\varphi_1,\dots,\varphi_{m_1}$ are linearly independent as mentioned in Step 1, one may consider the basis $(\varphi_{1}^{N},\dots,\varphi_{m_1}^{N})$ of $\mathcal S_{\varphi,m_1}$, orthonormal for the empirical inner product $\langle .,.\rangle_{N,1}$, obtained from $(\varphi_1,\dots,\varphi_{m_1})$ via the Gram-Schmidt process. Then, $(\varphi_{j}^{N}\otimes\psi_{\ell})_{j,\ell}$ is an orthonormal basis of $\mathcal S_{\bf m}$ equipped with $\langle .,.\rangle_N$, and by Cauchy-Schwarz's and Jensen's inequalities,
\begin{eqnarray}
 \sup_{\tau\in\mathcal S_{\bf m} :\|\tau\|_N = 1}\nu_N(\tau)^2 & = &
 \sup_{\Theta :
 \|\Theta\|_{2,\mathbf m}^{2} = 1}\left(
 \sum_{j = 1}^{m_1}\sum_{\ell = 1}^{m_2}
 \Theta_{j,\ell}\nu_N(\varphi_{j}^{N}\otimes\psi_{\ell})\right)^2
 \nonumber\\
 & \leqslant &
 \sum_{j = 1}^{m_1}\sum_{\ell = 1}^{m_2}
 \left(\frac{1}{NT}\sum_{i = 1}^{N}\int_{0}^{T}\varphi_{j}^{N}(X_{s}^{i})
 (\psi_{\ell}(X_{s + t}^{i}) -\mathbb E(\psi_{\ell}(X_{s + t}^{i})|X_{s}^{i}))ds\right)^2
 \nonumber\\
 \label{empirical_risk_bound_2}
 & \leqslant &
 4\mathfrak L_{\psi}(m_2)
 \sum_{j = 1}^{m_1}\underbrace{
 \left(\frac{1}{NT}\sum_{i = 1}^{N}\int_{0}^{T}
 \varphi_{j}^{N}(X_{s}^{i})^2ds\right)}_{= 1} =
 4m_1\mathfrak L_{\psi}(m_2).
\end{eqnarray}
Therefore, by Assumption \ref{condition_m} and Lemma \ref{Omega_event}, there exists a constant $\mathfrak c_1 > 0$, not depending on $\mathbf m$, $N$ and $t$, such that
\begin{displaymath}
B\leqslant
4m_1\mathfrak L_{\psi}(m_2)\mathbb P(\Omega_{m_1}^{c})\leqslant
\mathfrak c_1N^{-1}.
\end{displaymath}
{\bf Step 3 (conclusion).} By the two previous steps,
\begin{displaymath}
\mathbb E(\|\widehat p_{\mathbf m,t} -\widehat\Pi_{\bf m}(p_t)\|_{N}^{2})
\leqslant\frac{2m_1\mathfrak L_{\psi}(m_2)}{N} +\frac{\mathfrak c_1}{N}.
\end{displaymath}
Thus,
\begin{eqnarray*}
 \mathbb E(\|\widehat p_{\mathbf m,t} - p_{I\times J,t}\|_{N}^{2}) & \leqslant &
 \min_{\tau\in\mathcal S_{\bf m}}\mathbb E(\|\tau - p_{I\times J,t}\|_{N}^{2}) +
 \mathbb E(\|\widehat p_{\mathbf m,t} -\widehat\Pi_{\bf m}(p_t)\|_{N}^{2})\\
 & \leqslant &
 \min_{\tau\in\mathcal S_{\bf m}}\|\tau - p_{I\times J,t}\|_{f}^{2} +
 \frac{2m_1\mathfrak L_{\psi}(m_2)}{N} +\frac{\mathfrak c_1}{N}.
\end{eqnarray*}
\hfill $\Box$
%


%
\subsubsection{Proof of Lemma \ref{empirical_risk_bound_key_lemma}}\label{section_proof_empirical_risk_bound_key_lemma}
First, by Cauchy-Schwarz's inequality,
\begin{displaymath}
\sup_{\tau\in\mathcal S_{\bf m} :\|\tau\|_N = 1}
\langle\widehat p_{\mathbf m,t} -\widehat\Pi_{\bf m}(p_t),\tau\rangle_{N}^{2}
\leqslant
\|\widehat p_{\mathbf m,t} -\widehat\Pi_{\bf m}(p_t)\|_{N}^{2},
\end{displaymath}
and since
\begin{displaymath}
\tau^* :=
\frac{\widehat p_{\mathbf m,t} -
\widehat\Pi_{\bf m}(p_t)}{\|\widehat p_{\mathbf m,t} -
\widehat\Pi_{\bf m}(p_t)\|_N}\in\mathcal S_{\bf m}
\end{displaymath}
satisfies both $\|\tau^*\|_N = 1$ and
\begin{displaymath}
\langle\widehat p_{\mathbf m,t} -\widehat\Pi_{\bf m}(p_t),\tau^*\rangle_{N}^{2} =
\|\widehat p_{\mathbf m,t} -\widehat\Pi_{\bf m}(p_t)\|_{N}^{2},
\end{displaymath}
then
\begin{equation}\label{empirical_risk_bound_key_lemma_1}
\|\widehat p_{\mathbf m,t} -\widehat\Pi_{\bf m}(p_t)\|_{N}^{2} =
\sup_{\tau\in\mathcal S_{\bf m} :\|\tau\|_N = 1}
\langle\widehat p_{\mathbf m,t} -\widehat\Pi_{\bf m}(p_t),\tau\rangle_{N}^{2}.
\end{equation}
Now, let us show that
\begin{equation}\label{empirical_risk_bound_key_lemma_2}
\langle\widehat p_{\mathbf m,t} -
\widehat\Pi_{\bf m}(p_t),\tau\rangle_N =\nu_N(\tau)
\textrm{ $;$ }\forall\tau\in\mathcal S_{\bf m}.
\end{equation}
By the decompositions of $\widehat p_{\mathbf m,t}$ and $\widehat\Pi_{\bf m}(p_t)$ in the basis $(\varphi_j\otimes\psi_{\ell})_{j,\ell}$ of $\mathcal S_{\bf m}$, and by (\ref{Z_empirical_process_relationship}),
\begin{eqnarray*}
 \widehat p_{\mathbf m,t} -\widehat\Pi_{\bf m}(p_t) & = &
 \sum_{j = 1}^{m_1}\sum_{\ell = 1}^{m_2}
 [\widehat\Psi_{m_1}^{-1}(\widehat Z_{\mathbf m,t} -\widehat P_{\bf m}(p_t))]_{j,\ell}
 (\varphi_j\otimes\psi_{\ell})
  = 
 \sum_{j = 1}^{m_1}\sum_{\ell = 1}^{m_2}
 [\widehat\Psi_{m_1}^{-1}\widehat\Delta_{\bf m}]_{j,\ell}(\varphi_j\otimes\psi_{\ell}),
\end{eqnarray*}
where
\begin{eqnarray*}
 \widehat\Delta_{\bf m} & := &
 \left(\frac{1}{NT}\sum_{i = 1}^{N}\int_{0}^{T}
 \varphi_j(X_{s}^{i})\psi_{\ell}(X_{s + t}^{i})ds\right.\\
 & &
 \hspace{2cm}\left. -
 \frac{1}{NT}\sum_{i = 1}^{N}\int_{0}^{T}\int_{-\infty}^{\infty}
 \varphi_j(X_{s}^{i})\psi_{\ell}(y)p_t(X_{s}^{i},y)dyds\right)_{j,\ell} =
 (\nu_N(\varphi_j\otimes\psi_{\ell}))_{j,\ell}.
\end{eqnarray*}
Then, for any $\tau =\sum_{j,\ell}\Theta_{j,\ell}(\varphi_j\otimes\psi_{\ell})$ with $\Theta\in\mathcal M_{m_1,m_2}(\mathbb R)$,
\begin{eqnarray*}
 \langle\widehat p_{\mathbf m,t} -
 \widehat\Pi_{\bf m}(p_t),\tau\rangle_N & = &
 \sum_{j,j' = 1}^{m_1}\sum_{\ell,\ell' = 1}^{m_2}
 [\widehat\Psi_{m_1}^{-1}\widehat\Delta_{\bf m}]_{j,\ell}\Theta_{j',\ell'}
 \langle\varphi_j\otimes\psi_{\ell},\varphi_{j'}\otimes\psi_{\ell'}\rangle_N
 \end{eqnarray*} 
 As 
 $$\langle\varphi_j\otimes\psi_{\ell},\varphi_{j'}\otimes\psi_{\ell'}\rangle_N = \frac{1}{NT}\sum_{i = 1}^{N}
 \int_{0}^{T}\varphi_j(X_{s}^{i})\varphi_{j'}(X_{s}^{i})ds
 \int_{-\infty}^{\infty}\psi_{\ell}(y)\psi_{\ell'}(y)dy 
 = [\widehat\Psi_{m_1}]_{j,j'}\delta_{\ell,\ell'}$$
 we get 
 \begin{eqnarray*}
 \langle\widehat p_{\mathbf m,t} -
 \widehat\Pi_{\bf m}(p_t),\tau\rangle_N
 & = &
 \sum_{j,j' = 1}^{m_1}\sum_{\ell,\ell' = 1}^{m_2}
 [\widehat\Psi_{m_1}^{-1}\widehat\Delta_{\bf m}]_{j,\ell}\Theta_{j',\ell'}[\widehat\Psi_{m_1}]_{j,j'}\delta_{\ell,\ell'}\\
  \\
 & = &
 \sum_{j' = 1}^{m_1}\sum_{\ell = 1}^{m_2}\Theta_{j',\ell}
 \underbrace{\sum_{j = 1}^{m_1}[\widehat\Psi_{m_1}]_{j,j'}
 [\widehat\Psi_{m_1}^{-1}\widehat\Delta_{\bf m}]_{j,\ell}}_{= [\widehat\Delta_{\bf m}]_{j',\ell}}\\
 & = &
 \sum_{j' = 1}^{m_1}\sum_{\ell = 1}^{m_2}\Theta_{j',\ell}\nu_N(\varphi_{j'}\otimes\psi_{\ell}) =
 \nu_N(\tau).
\end{eqnarray*}
Therefore, by Equalities (\ref{empirical_risk_bound_key_lemma_1}) and (\ref{empirical_risk_bound_key_lemma_2}) together,
\begin{displaymath}
\|\widehat p_{\mathbf m,t} -\widehat\Pi_{\bf m}(p_t)\|_{N}^{2} =
\sup_{\tau\in\mathcal S_{\bf m} :\|\tau\|_N = 1}\nu_N(\tau)^2.
\end{displaymath}
\hfill $\Box$
%


%
\subsection{Proof of Theorem \ref{f_weighted_risk_bound_truncated}}\label{section_proof_f_weighted_risk_bound_truncated}
First of all,
\begin{eqnarray*}
 \mathbb E(\|\widetilde p_{\mathbf m,t} - p_{I\times J,t}\|_{f}^{2}) & = &
 \mathbb E(\|\widetilde p_{\mathbf m,t} - p_{I\times J,t}\|_{f}^{2}\mathbf 1_{\Omega_{m_1}}) +
 \mathbb E(\|\widetilde p_{\mathbf m,t} - p_{I\times J,t}\|_{f}^{2}\mathbf 1_{\Omega_{m_1}^{c}})\\
 & =: &
 A + B.
\end{eqnarray*}
Let us find suitable bounds on $A$ and $B$.
\\
\\
{\bf Step 1 (bound on $A$).} As already mentioned, since
\begin{displaymath}
\|.\|_{f,1}^{2}\mathbf 1_{\Omega_{m_1}}\leqslant
2\|.\|_{N,1}^{2}\mathbf 1_{\Omega_{m_1}}
\quad {\rm on}\quad
\mathcal S_{\varphi,m_1}
\end{displaymath}
by the definition of $\Omega_{m_1}$, for every $\tau\in\mathcal S_{\bf m}$,
\begin{eqnarray*}
 \|\tau\|_{f}^{2}\mathbf 1_{\Omega_{m_1}} & = &
 \int_{-\infty}^{\infty}\|\tau(\cdot,y)\|_{f,1}^{2}\mathbf 1_{\Omega_{m_1}}dy\\
 & &\hspace{2cm}
 \leqslant
 2\int_{-\infty}^{\infty}\|\tau(\cdot,y)\|_{N,1}^{2}\mathbf 1_{\Omega_{m_1}}dy =
 2\|\tau\|_{N}^{2}\mathbf 1_{\Omega_{m_1}}.
\end{eqnarray*}
Then, noting $p_{I\times J,t}^{f}$ as the orthogonal projection of $p_{I\times J,t}$ onto $\mathcal S_{\bf m}$ for the inner product $\langle .,.\rangle_f$,
\begin{eqnarray*}
 \|\widetilde p_{\mathbf m,t} - p_{I\times J,t}\|_{f}^{2}\mathbf 1_{\Omega_{m_1}} & = &
 (\|\widetilde p_{\mathbf m,t} - p_{I\times J,t}^{f}\|_{f}^{2} +
 \|p_{I\times J,t}^{f} - p_{I\times J,t}\|_{f}^{2})\mathbf 1_{\Omega_{m_1}}\\
 & \leqslant &
 \|p_{I\times J,t}^{f} - p_{I\times J,t}\|_{f}^{2} +
 2\|\widetilde p_{\mathbf m,t} - p_{I\times J,t}^{f}\|_{N}^{2}\mathbf 1_{\Omega_{m_1}}\\
 & \leqslant &
 \min_{\tau\in\mathcal S_{\bf m}}
 \|\tau - p_{I\times J,t}\|_{f}^{2} +
 4\|\widetilde p_{\mathbf m,t} - p_{I\times J,t}\|_{N}^{2} +
 4\|p_{I\times J,t} - p_{I\times J,t}^{f}\|_{N}^{2}.
\end{eqnarray*}
Moreover,
\begin{displaymath}
\|\widetilde p_{\mathbf m,t} - p_{I\times J,t}\|_{N}^{2}\leqslant
\|\widehat p_{\mathbf m,t} - p_{I\times J,t}\|_{N}^{2} +
\|p_{I\times J,t}\|_{N}^{2}\mathbf 1_{\Lambda_{m_1}^{c}}
\end{displaymath}
and, by Lemma \ref{Omega_event},
\begin{eqnarray*}
 \mathbb E(\|p_{I\times J,t}\|_{N}^{2}\mathbf 1_{\Lambda_{m_1}^{c}})
 & \leqslant &
 \mathbb E\left[\left(\frac{1}{NT}\sum_{i = 1}^{N}
 \int_{0}^{T}\int_{-\infty}^{\infty}
 p_t(X_{s}^{i},y)^2dyds\right)^2\right]^{\frac{1}{2}}
 \mathbb P(\Lambda_{m_1}^{c})^{\frac{1}{2}}\\
 & \leqslant &
 \mathfrak c_{\ref{Omega_event}}^{\frac{1}{2}}\frac{R(t)}{N^{p/2}}
 \quad {\rm with}\quad
 R(t) =
 \frac{1}{T}\mathbb E\left[\left(\int_{0}^{T}\int_{-\infty}^{\infty}
 p_t(X_s,y)^2dyds\right)^2\right]^{\frac{1}{2}}.
\end{eqnarray*}
Therefore, by Theorem \ref{empirical_risk_bound}, there exists a constant $\mathfrak c_1 > 0$, not depending on $\mathbf m$, $N$ and $t$, such that
\begin{eqnarray*}
 A & \leqslant &
 5\min_{\tau\in\mathcal S_{\bf m}}
 \|\tau - p_{I\times J,t}\|_{f}^{2} +
 4\mathbb E(\|\widetilde p_{\mathbf m,t} - p_{I\times J,t}\|_{N}^{2})\\
 & \leqslant &
 9\min_{\tau\in\mathcal S_{\bf m}}
 \|\tau - p_{I\times J,t}\|_{f}^{2} +
 \frac{8m_1\mathfrak L_{\psi}(m_2)}{N} +
 \frac{\mathfrak c_1(1 + R(t))}{N}.
\end{eqnarray*}
{\bf Step 2 (bound on $B$).} Since $\widehat\Theta_{\mathbf m,t}\widehat\Theta_{\mathbf m,t}^{*}$ is a positive semidefinite (symmetric) matrix,
\begin{eqnarray*}
 \|\widehat p_{\mathbf m,t}\|_{f}^{2} & = &
 \int_{-\infty}^{\infty}f(x)\int_{-\infty}^{\infty}\left(
 \sum_{j = 1}^{m_1}\sum_{\ell = 1}^{m_2}
 [\widehat\Theta_{\mathbf m,t}]_{j,\ell}\varphi_j(x)\psi_{\ell}(y)\right)^2dydx\\
 & = &
 \int_{-\infty}^{\infty}f(x)
 \sum_{j,j' = 1}^{m_1}\sum_{\ell,\ell' = 1}^{m_2}
 [\widehat\Theta_{\mathbf m,t}]_{j,\ell}
 [\widehat\Theta_{\mathbf m,t}]_{j',\ell'}
 \varphi_j(x)\varphi_{j'}(x)
\int_{-\infty}^{\infty}
 \psi_{\ell}(y)\psi_{\ell'}(y)dydx
 \end{eqnarray*} 
 Now using that $\int_{-\infty}^{\infty}
 \psi_{\ell}(y)\psi_{\ell'}(y)dy=\delta_{\ell,\ell'}$, we get 
 \begin{eqnarray*}
 \|\widehat p_{\mathbf m,t}\|_{f}^{2} 
 & = &
 \sum_{j,j' = 1}^{m_1}\sum_{\ell = 1}^{m_2}
 [\widehat\Theta_{\mathbf m,t}]_{j,\ell}
 [\widehat\Theta_{\mathbf m,t}]_{j',\ell}
\int_{-\infty}^{\infty}\varphi_j(x)\varphi_{j'}(x)f(x)dx \\
 & = &
 \sum_{j = 1}^{m_1}\sum_{\ell = 1}^{m_2}
 [\widehat\Theta_{\mathbf m,t}^{*}]_{\ell,j}
 \underbrace{\sum_{j' = 1}^{m_1}
 [\Psi_{m_1}]_{j,j'}
 [\widehat\Theta_{\mathbf m,t}]_{j',\ell}}_{= [\Psi_{m_1}\widehat\Theta_{\mathbf m,t}]_{j,\ell}} \mbox{ as } \int_{-\infty}^{\infty}\varphi_j(x)\varphi_{j'}(x)f(x)dx = [\Psi_{m_1}]_{j,j'} \\
 & = &
 \sum_{j = 1}^{m_1}
 [\Psi_{m_1}\widehat\Theta_{\mathbf m,t}\widehat\Theta_{\mathbf m,t}^{*}]_{j,j} =
 {\rm trace}(\Psi_{m_1}\widehat\Theta_{\mathbf m,t}\widehat\Theta_{\mathbf m,t}^{*})
 \leqslant\|\Psi_{m_1}\|_{\rm op}
 {\rm trace}(\widehat\Theta_{\mathbf m,t}\widehat\Theta_{\mathbf m,t}^{*}).
\end{eqnarray*}
Let us find suitable controls on $\|\Psi_{m_1}\|_{\rm op}^{2}$ and $\mathbb E({\rm trace}(\widehat\Theta_{\mathbf m,t}\widehat\Theta_{\mathbf m,t}^{*})^2)$. On the one hand, by Cauchy-Schwarz's and Jensen's inequalities,
\begin{eqnarray*}
 \|\Psi_{m_1}\|_{\rm op}^{2} & = &
 \sup_{y :\|y\|_{2,m_1} = 1}
 \sum_{j = 1}^{m_1}\left(\sum_{j' = 1}^{m_1}
 y_{j'}\int_{-\infty}^{\infty}\varphi_j(x)\varphi_{j'}(x)f(x)dx\right)^2\\
 & \leqslant &
 \sum_{j,j' = 1}^{m_1}
 \left(\int_{-\infty}^{\infty}\varphi_j(x)\varphi_{j'}(x)f(x)dx\right)^2\\
 & \leqslant &
 \sum_{j,j' = 1}^{m_1}
 \int_{-\infty}^{\infty}\varphi_j(x)^2\varphi_{j'}(x)^2f(x)dx
 \leqslant
 \mathfrak L_{\varphi}(m_1)^2\underbrace{\int_{-\infty}^{\infty}f(x)dx}_{= 1}.
\end{eqnarray*}
On the other hand, since $\widehat Z_{\mathbf m,t}\widehat Z_{\mathbf m,t}^{*}$ is a positive semidefinite (symmetric) matrix,
\begin{displaymath}
{\rm trace}(\widehat\Theta_{\mathbf m,t}\widehat\Theta_{\mathbf m,t}^{*}) =
{\rm trace}((\widehat\Psi_{m_1}^{-1})^2\widehat Z_{\mathbf m,t}
\widehat Z_{\mathbf m,t}^{*})
\leqslant
\|\widehat\Psi_{m_1}^{-1}\|_{\rm op}^{2}
{\rm trace}(\widehat Z_{\mathbf m,t}\widehat Z_{\mathbf m,t}^{*}).
\end{displaymath}
Moreover,
\begin{eqnarray*}
 \mathbb E({\rm trace}(\widehat Z_{\mathbf m,t}\widehat Z_{\mathbf m,t}^{*})^2)
 & = &
 \mathbb E\left[\left(\sum_{j = 1}^{m_1}\sum_{\ell = 1}^{m_2}
 [\widehat Z_{\mathbf m,t}]_{j,\ell}^{2}\right)^2\right]\\
 & \leqslant &
 \frac{m_1m_2}{N^4T^4}\sum_{j = 1}^{m_1}\sum_{\ell = 1}^{m_2}
 \mathbb E\left[\left(\sum_{i = 1}^{N}
 \int_{0}^{T}\varphi_j(X_{s}^{i})\psi_{\ell}(X_{s + t}^{i})ds\right)^4\right]
 \leqslant
 m_1m_2\mathfrak L_{\varphi}(m_1)^2\mathfrak L_{\psi}(m_2)^2
\end{eqnarray*}
and, on the event $\Lambda_{m_1}$,
\begin{displaymath}
\|\widehat\Psi_{m_1}^{-1}\|_{\rm op}^{4}
\leqslant\mathfrak c_{\Lambda}^{4}\frac{N^4T^4}{\log(NT)^4\mathfrak L_{\varphi}(m_1)^4}.
\end{displaymath}
Then, there exists a constant $\mathfrak c_2 > 0$, not depending on $\mathbf m$, $N$ and $t$, such that
\begin{displaymath}
\mathbb E(\|\widetilde p_{\mathbf m,t}\|_{f}^{4})\leqslant
\mathfrak c_2N^4m_1m_2\mathfrak L_{\psi}(m_2)^2.
\end{displaymath}
So, by Lemma \ref{Omega_event}, there exists a constant $\mathfrak c_3 > 0$, not depending on $\mathbf m$, $N$ and $t$, such that
\begin{eqnarray*}
 B & \leqslant &
 \mathbb E(\|\widetilde p_{\mathbf m,t} - p_{I\times J,t}\|_{f}^{4})^{\frac{1}{2}}
 \mathbb P(\Omega_{m_1}^{c})^{\frac{1}{2}}\\
 & \leqslant &
 \mathfrak c_3(1 +\|p_{I\times J,t}\|_{f}^{2})N^{3 + q -\frac{p}{2}}
 \leqslant
 \frac{\mathfrak c_3(1 +\|p_{I\times J,t}\|_{f}^{2})}{N}
 \quad
 \textrm{because $p = 2(q + 4) + 1$ in Lemma \ref{Omega_event}.}
\end{eqnarray*}
{\bf Step 3 (conclusion).} By the two previous steps, there exists a constant $\mathfrak c_4 > 0$, not depending on $\mathbf m$, $N$ and $t$, such that
\begin{displaymath}
\mathbb E(\|\widetilde p_{\mathbf m,t} - p_{I\times J,t}\|_{f}^{2})
\leqslant
9\min_{\tau\in\mathcal S_{\bf m}}
\|\tau - p_{I\times J,t}\|_{f}^{2} +
\frac{8m_1\mathfrak L_{\psi}(m_2)}{N} +
\frac{\mathfrak c_4(1 + R_f(t))}{N}
\end{displaymath}
with $R_f(t) = R(t) +\|p_{I\times J,t}\|_{f}^{2}$. Moreover, by Inequality (\ref{Gaussian_bound_density}), and since $y\mapsto p_t(x,y)$ is a density function for every $x\in\mathbb R$,
\begin{eqnarray*}
 R(t)
 & = &
 \frac{1}{T}\mathbb E\left[\left(\int_{0}^{T}\int_{-\infty}^{\infty}
 p_t(X_s,y)^2dyds\right)^2\right]^{\frac{1}{2}}\\
 & &
 \hspace{2cm}\leqslant
 \frac{1}{T}\mathbb E\left[\left(
 \sup_{(u,v)\in\mathbb R^2}p_t(u,v)
 \int_{0}^{T}\int_{-\infty}^{\infty}
 p_t(X_s,y)dyds\right)^2\right]^{\frac{1}{2}}\\
 & &
 \hspace{2cm}\leqslant
 \frac{\overline{\mathfrak c}_T}{Tt^{1/2}}\mathbb E\left[\left(\int_{0}^{T}\int_{-\infty}^{\infty}
 p_t(X_s,y)dyds\right)^2\right]^{\frac{1}{2}} =
 \overline{\mathfrak c}_Tt^{-\frac{1}{2}}
\end{eqnarray*}
and
\begin{eqnarray*}
 \|p_{I\times J,t}\|_{f}^{2}
 & = &
 \int_I\int_J
 p_t(x,y)^2f(x)dxdy\\
 & &
 \hspace{2cm}\leqslant
 \sup_{(u,v)\in\mathbb R^2}p_t(u,v)
 \int_{-\infty}^{\infty}f(x)\int_{-\infty}^{\infty}
 p_t(x,y)dydx\\
 & &
 \hspace{2cm}\leqslant
 \overline{\mathfrak c}_Tt^{-\frac{1}{2}}
 \int_{-\infty}^{\infty}f(x)\int_{-\infty}^{\infty}
 p_t(x,y)dydx =\overline{\mathfrak c}_Tt^{-\frac{1}{2}},
\end{eqnarray*}
leading to
\begin{displaymath}
\frac{1}{T}\int_{0}^{T}R_f(t)dt\leqslant
4\overline{\mathfrak c}_TT^{-\frac{1}{2}}.
\end{displaymath}
In conclusion,
\begin{displaymath}
\frac{1}{T}\int_{0}^{T}
\mathbb E(\|\widetilde p_{\mathbf m,t} - p_{I\times J,t}\|_{f}^{2})dt
\leqslant
9\min_{\tau\in\mathcal S_{\bf m}}\left\{\frac{1}{T}\int_{0}^{T}
\|\tau - p_{I\times J,t}\|_{f}^{2}dt\right\} +
\frac{8m_1\mathfrak L_{\psi}(m_2)}{N} +
\frac{\mathfrak c_4(1 + 4\overline{\mathfrak c}_TT^{-1/2})}{N}.
\end{displaymath}
\hfill $\Box$
%


%
\subsection{Proof of Theorem \ref{risk_bound_adaptive_estimator}}\label{section_proof_risk_bound_adaptive_estimator}
The proof of Theorem \ref{risk_bound_adaptive_estimator} relies on the two following technical lemmas.
%


%
\begin{lemma}\label{projection_estimator_smaller_model}
Under Assumption \ref{assumption_projection_spaces}, for any $\mathbf m = (m_1,m_2)$ and $\mathbf M = (M_1,M_2)$ belonging to $\mathcal N^2$, if $\mathcal S_{\bf m}\subset\mathcal S_{\mathbf M}$, then
\begin{displaymath}
\widehat\Pi_{\bf m}(\widehat p_{\mathbf M,t}) =
\widehat p_{\mathbf m,t}.
\end{displaymath}
\end{lemma}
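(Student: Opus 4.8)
The plan is to use the variational (normal-equation) characterization of the least squares estimator, which identifies $\widehat p_{\mathbf n,t}$ as the unique element of $\mathcal S_{\mathbf n}$ whose empirical inner product against every test function in $\mathcal S_{\mathbf n}$ reproduces the linear functional defining the contrast. Concretely, for any $\mathbf n=(n_1,n_2)\in\mathcal N^2$ such that $\widehat\Psi_{n_1}$ is invertible, I would combine the first-order condition $\widehat\Psi_{n_1}\widehat\Theta_{\mathbf n,t}=\widehat Z_{\mathbf n,t}$ from (\ref{definition_projection_LS}), the identity $\langle\varphi_j\otimes\psi_\ell,\varphi_{j'}\otimes\psi_{\ell'}\rangle_N=[\widehat\Psi_{n_1}]_{j,j'}\delta_{\ell,\ell'}$ (established in the proof of Lemma \ref{empirical_risk_bound_key_lemma}), and the definition (\ref{Z_vector}) of $\widehat Z_{\mathbf n,t}$, to obtain
\begin{equation}\label{normal_eq_plan}
\langle\widehat p_{\mathbf n,t},\tau\rangle_N=\frac{1}{NT}\sum_{i=1}^N\int_0^T\tau(X_s^i,X_{s+t}^i)\,ds=:L_N(\tau)\quad\forall\tau\in\mathcal S_{\mathbf n}.
\end{equation}
Indeed, testing against $\tau=\varphi_j\otimes\psi_\ell$ reduces the left-hand side to $[\widehat\Psi_{n_1}\widehat\Theta_{\mathbf n,t}]_{j,\ell}$ and the right-hand side to $[\widehat Z_{\mathbf n,t}]_{j,\ell}$, so (\ref{normal_eq_plan}) is exactly the first-order condition read entrywise, and it extends to all $\tau\in\mathcal S_{\mathbf n}$ by linearity.

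Next I would apply (\ref{normal_eq_plan}) with $\mathbf n=\mathbf M$ and restrict the test functions to the subspace $\mathcal S_{\mathbf m}\subset\mathcal S_{\mathbf M}$, which gives $\langle\widehat p_{\mathbf M,t},\tau\rangle_N=L_N(\tau)$ for every $\tau\in\mathcal S_{\mathbf m}$. On the other hand, by the definition (\ref{empirical_orthogonal_projection}) of the empirical projection, $\widehat\Pi_{\mathbf m}(\widehat p_{\mathbf M,t})$ is the element of $\mathcal S_{\mathbf m}$ characterized by the orthogonality relations $\langle\widehat\Pi_{\mathbf m}(\widehat p_{\mathbf M,t}),\tau\rangle_N=\langle\widehat p_{\mathbf M,t},\tau\rangle_N$ for all $\tau\in\mathcal S_{\mathbf m}$. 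Chaining the two, $\widehat\Pi_{\mathbf m}(\widehat p_{\mathbf M,t})\in\mathcal S_{\mathbf m}$ satisfies $\langle\widehat\Pi_{\mathbf m}(\widehat p_{\mathbf M,t}),\tau\rangle_N=L_N(\tau)$ for every $\tau\in\mathcal S_{\mathbf m}$, i.e. it solves the very same characterization (\ref{normal_eq_plan}) as $\widehat p_{\mathbf m,t}$ does for $\mathbf n=\mathbf m$. Subtracting and testing against $\tau=\widehat\Pi_{\mathbf m}(\widehat p_{\mathbf M,t})-\widehat p_{\mathbf m,t}\in\mathcal S_{\mathbf m}$ yields $\|\widehat\Pi_{\mathbf m}(\widehat p_{\mathbf M,t})-\widehat p_{\mathbf m,t}\|_N^2=0$, from which the asserted equality follows.

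The only point requiring care, and the step I expect to be the main (purely bookkeeping) obstacle, is ensuring that $\langle.,.\rangle_N$ is a genuine inner product on $\mathcal S_{\mathbf m}$, so that the minimizer in (\ref{empirical_orthogonal_projection}) is unique and $\|\cdot\|_N=0$ forces the function to vanish. This holds precisely when $\widehat\Psi_{m_1}$ is positive definite. By Assumption \ref{assumption_projection_spaces}.(1)--(2) the inclusion $\mathcal S_{\mathbf m}\subset\mathcal S_{\mathbf M}$ forces $m_1\leqslant M_1$ and $m_2\leqslant M_2$, so that $\widehat\Psi_{m_1}$ is the leading principal submatrix of $\widehat\Psi_{M_1}$; since $\widehat p_{\mathbf M,t}$ is defined through (\ref{definition_projection_LS}), $\widehat\Psi_{M_1}$ is invertible, hence positive definite, and a principal submatrix of a positive definite matrix is again positive definite. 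Thus $\widehat\Psi_{m_1}$ is invertible, $\|\cdot\|_N$ is a norm on $\mathcal S_{\mathbf m}$, the uniqueness used in both (\ref{normal_eq_plan}) and (\ref{empirical_orthogonal_projection}) is valid, and the final implication $\|\widehat\Pi_{\mathbf m}(\widehat p_{\mathbf M,t})-\widehat p_{\mathbf m,t}\|_N^2=0\Rightarrow\widehat\Pi_{\mathbf m}(\widehat p_{\mathbf M,t})=\widehat p_{\mathbf m,t}$ is legitimate.
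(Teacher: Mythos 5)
Your proof is correct and is essentially the paper's argument recast in variational form: the normal-equation identity $\langle\widehat p_{\mathbf M,t},\varphi_j\otimes\psi_\ell\rangle_N=[\widehat Z_{\mathbf M,t}]_{j,\ell}=[\widehat Z_{\mathbf m,t}]_{j,\ell}$ for $j\leqslant m_1$, $\ell\leqslant m_2$ is exactly the paper's computation $\widehat P_{\bf m}(\widehat p_{\mathbf M,t})=\widehat Z_{\mathbf m,t}$, after which both arguments conclude by uniqueness (you via positive definiteness of $\|\cdot\|_N$ on $\mathcal S_{\bf m}$, the paper by applying $\widehat\Psi_{m_1}^{-1}$ directly). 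Your closing observation that $\widehat\Psi_{m_1}$ is invertible because it is a leading principal submatrix of the positive definite $\widehat\Psi_{M_1}$ is a point the paper leaves implicit, and is a correct and worthwhile addition.
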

%


%
\begin{lemma}\label{Xi_event}
Consider the event
\begin{displaymath}
\Xi_N :=
\{\mathcal M_N\subset\widehat{\mathcal M}_N\subset\mathfrak M_N\},
\end{displaymath}
where $\mathfrak M_N :=\mathcal U_N\cap (\mathfrak V_N\times\mathcal N)$ and
\begin{displaymath}
\mathfrak V_N :=
\left\{m_1\in\mathcal N :
\mathfrak c_{\varphi}^{2}m_1(\|\Psi_{m_1}^{-1}\|_{\rm op}^{2}\vee 1)
\leqslant 4\mathfrak d\frac{NT}{\log(NT)}\right\}.
\end{displaymath}
Under Assumption \ref{assumption_projection_spaces}, there exists a constant $\mathfrak c_{\ref{Xi_event}} > 0$, not depending on $N$, such that
\begin{displaymath}
\mathbb P(\Xi_{N}^{c})\leqslant
\frac{\mathfrak c_{\ref{Xi_event}}}{N^{p - 1}}.
\end{displaymath}
\end{lemma}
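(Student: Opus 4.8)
The plan is to reduce the two-sided event to a statement purely about the first-coordinate index sets, and then to exploit the spectral equivalence of $\widehat\Psi_{m_1}$ and $\Psi_{m_1}$ granted by the event $\Omega_{m_1}$ of Lemma \ref{Omega_event}. Since $\mathcal U_N\subset\mathcal N^2$, each of $\mathcal M_N$, $\widehat{\mathcal M}_N$, $\mathfrak M_N$ is exactly $\{(m_1,m_2)\in\mathcal U_N : m_1\in V\}$ with $V=\mathcal V_N,\widehat{\mathcal V}_N,\mathfrak V_N$ respectively. Consequently the chain $\mathcal V_N\subset\widehat{\mathcal V}_N\subset\mathfrak V_N$ immediately forces $\mathcal M_N\subset\widehat{\mathcal M}_N\subset\mathfrak M_N$, so it suffices to exhibit a high-probability event on which the two first-coordinate inclusions hold.

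First I would record, for a fixed $m_1\in\mathcal N$, the spectral consequence of $\Omega_{m_1}$. Writing $\tau=\sum_j y_j\varphi_j\in\mathcal S_{\varphi,m_1}$, one has $\|\tau\|_{N,1}^{2}=y^*\widehat\Psi_{m_1}y$ and $\|\tau\|_{f,1}^{2}=y^*\Psi_{m_1}y$, so on $\Omega_{m_1}$ the bound $|\|\tau\|_{N,1}^{2}/\|\tau\|_{f,1}^{2}-1|\leqslant 1/2$ reads $\tfrac12 y^*\Psi_{m_1}y\leqslant y^*\widehat\Psi_{m_1}y\leqslant\tfrac32 y^*\Psi_{m_1}y$ for every $y$. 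Comparing smallest eigenvalues gives $\lambda_{\min}(\widehat\Psi_{m_1})\geqslant\tfrac12\lambda_{\min}(\Psi_{m_1})$ and $\lambda_{\min}(\Psi_{m_1})\geqslant\tfrac23\lambda_{\min}(\widehat\Psi_{m_1})$, i.e. $\|\widehat\Psi_{m_1}^{-1}\|_{\rm op}\leqslant 2\|\Psi_{m_1}^{-1}\|_{\rm op}$ and $\|\Psi_{m_1}^{-1}\|_{\rm op}\leqslant\tfrac32\|\widehat\Psi_{m_1}^{-1}\|_{\rm op}$. Squaring and using that $a\vee 1\geqslant 1$ then yields, on $\Omega_{m_1}$,
\[
\|\widehat\Psi_{m_1}^{-1}\|_{\rm op}^{2}\vee 1\leqslant 4\bigl(\|\Psi_{m_1}^{-1}\|_{\rm op}^{2}\vee 1\bigr)
\quad\text{and}\quad
\|\Psi_{m_1}^{-1}\|_{\rm op}^{2}\vee 1\leqslant 4\bigl(\|\widehat\Psi_{m_1}^{-1}\|_{\rm op}^{2}\vee 1\bigr).
\]
The thresholds $\mathfrak d/4$, $\mathfrak d$ and $4\mathfrak d$ defining $\mathcal V_N$, $\widehat{\mathcal V}_N$ and $\mathfrak V_N$ are tuned precisely to absorb this factor $4$: if $m_1\in\mathcal V_N$ then $\mathfrak c_{\varphi}^{2}m_1(\|\widehat\Psi_{m_1}^{-1}\|_{\rm op}^{2}\vee 1)\leqslant 4\cdot\tfrac{\mathfrak d}{4}\cdot\tfrac{NT}{\log(NT)}=\mathfrak d\tfrac{NT}{\log(NT)}$, so $m_1\in\widehat{\mathcal V}_N$; symmetrically, if $m_1\in\widehat{\mathcal V}_N$ then $\mathfrak c_{\varphi}^{2}m_1(\|\Psi_{m_1}^{-1}\|_{\rm op}^{2}\vee 1)\leqslant 4\mathfrak d\tfrac{NT}{\log(NT)}$, so $m_1\in\mathfrak V_N$. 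Thus both inclusions hold at every index $m_1$ for which $\Omega_{m_1}$ is realized.

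Because $\widehat{\mathcal V}_N$ is a random set, I would then work on $\Omega:=\bigcap_{m_1\in\mathcal N}\Omega_{m_1}$, on which the previous step applies simultaneously to every index of $\mathcal N$; there $\mathcal V_N\subset\widehat{\mathcal V}_N$ and $\widehat{\mathcal V}_N\subset\mathfrak V_N$ both hold, so $\Omega\subset\Xi_N$. Finally, by the union bound and Lemma \ref{Omega_event},
\[
\mathbb P(\Xi_N^c)\leqslant\mathbb P(\Omega^c)\leqslant\sum_{m_1\in\mathcal N}\mathbb P(\Omega_{m_1}^c)\leqslant|\mathcal N|\,\frac{\mathfrak c_{\ref{Omega_event}}}{N^p}\leqslant\frac{\mathfrak c_{\ref{Omega_event}}}{N^{p-1}},
\]
using $|\mathcal N|=N\wedge N_T\leqslant N$, which gives the claim with $\mathfrak c_{\ref{Xi_event}}=\mathfrak c_{\ref{Omega_event}}$.

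The only genuinely delicate point is the handling of the randomness of $\widehat{\mathcal V}_N$. The inclusion $\widehat{\mathcal V}_N\subset\mathfrak V_N$ is not a single pointwise statement: a priori any $m_1\in\mathcal N$ could belong to $\widehat{\mathcal V}_N$, and ruling the boundary indices $m_1\notin\mathfrak V_N$ out of $\widehat{\mathcal V}_N$ still requires the two-sided norm equivalence there. Restricting the intersection to $\mathfrak V_N$ (which would suffice only for the purely deterministic inclusion $\mathcal V_N\subset\mathfrak V_N$) is therefore not enough; intersecting over all of $\mathcal N$ settles it at the cost of a single factor $|\mathcal N|\leqslant N$ in the union bound, which is exactly the degradation from the $N^{-p}$ of Lemma \ref{Omega_event} to the announced $N^{-(p-1)}$.
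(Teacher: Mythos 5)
Your reduction to the first-coordinate sets and the spectral consequence of $\Omega_{m_1}$ (the factor $4$ that the thresholds $\mathfrak d/4$, $\mathfrak d$, $4\mathfrak d$ are tuned to absorb) are both correct, and they recover the observation the paper itself makes, namely $\{\mathcal V_N\subset\widehat{\mathcal V}_N\subset\mathfrak V_N\}\subset\Xi_N$. The gap is in your final union bound. Lemma \ref{Omega_event} is stated \emph{under Assumption \ref{condition_m}}, whose second part is the stability condition (\ref{stability_condition}), $\mathfrak L_{\varphi}(m_1)(\|\Psi_{m_1}^{-1}\|_{\rm op}\vee 1)\leqslant\tfrac{\mathfrak c_{\Lambda}}{2}\cdot\tfrac{NT}{\log(NT)}$; the bound $\mathbb P(\Omega_{m_1}^{c})\leqslant\mathfrak c_{\ref{Omega_event}}N^{-p}$ is only available for such $m_1$. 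Every $m_1\in\mathfrak V_N$ satisfies it (this is exactly what Step~2 of the proof of Theorem \ref{risk_bound_adaptive_estimator} checks before intersecting the $\Omega_{m_1}$ over $m_1\in\mathfrak V_N$ only), but an arbitrary $m_1\in\mathcal N$ need not, and the indices you must control to rule out $\widehat{\mathcal V}_N\not\subset\mathfrak V_N$ are precisely the $m_1\notin\mathfrak V_N$. For those, $\mathfrak L_{\varphi}(m_1)\|\Psi_{m_1}^{-1}\|_{\rm op}$ may be of order $NT/\log(NT)$ or larger, the two-sided concentration defining $\Omega_{m_1}$ genuinely degrades, and $\sum_{m_1\in\mathcal N}\mathbb P(\Omega_{m_1}^{c})\leqslant|\mathcal N|\,\mathfrak c_{\ref{Omega_event}}N^{-p}$ is not justified. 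You correctly flagged this as the delicate point, but the fix of "intersecting over all of $\mathcal N$" assumes the very estimate that is missing.

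For comparison, the paper does not derive this bound from Lemma \ref{Omega_event} at all: it invokes Comte and Genon-Catalot \cite{CGC20}, Inequality (6.17), where the inclusion $\widehat{\mathcal V}_N\subset\mathfrak V_N$ is handled by a separate one-sided deviation argument. Schematically: by nestedness (Assumption \ref{assumption_projection_spaces}.(1)) the map $m_1\mapsto\mathfrak c_{\varphi}^{2}m_1(\|\Psi_{m_1}^{-1}\|_{\rm op}^{2}\vee 1)$ and its empirical analogue are non-decreasing, so all three sets are intervals and the bad event reduces to a single critical index $m_1\notin\mathfrak V_N$; on $\{m_1\in\widehat{\mathcal V}_N\}\cap\{m_1\notin\mathfrak V_N\}$ one gets $\lambda_{\min}(\widehat\Psi_{m_1})>2\lambda_{\min}(\Psi_{m_1})$, hence $\|\widehat\Psi_{m_1}-\Psi_{m_1}\|_{\rm op}>\lambda_{\min}(\Psi_{m_1})$ by Weyl's inequality, and this deviation is controlled by a matrix concentration inequality whose exponent is made large by the constraint $m_1\in\widehat{\mathcal V}_N$ itself (a bound on $\|\widehat\Psi_{m_1}^{-1}\|_{\rm op}$), not by a stability condition on $\Psi_{m_1}$. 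Your argument restricted to $m_1\in\mathcal V_N\subset\mathfrak V_N$ does correctly give $\mathbb P(\mathcal V_N\not\subset\widehat{\mathcal V}_N)\lesssim N^{-(p-1)}$; to complete the proof you would need to supply an estimate of the above kind for the indices outside $\mathfrak V_N$, or simply cite \cite{CGC20} as the paper does.
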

\noindent
The proof of Lemma \ref{projection_estimator_smaller_model} is postponed to Subsubsection \ref{section_proof_projection_estimator_smaller_model}, and Lemma \ref{Xi_event} is a straightforward consequence of Comte and Genon-Catalot \cite{CGC20}, Inequality (6.17), because
\begin{displaymath}
\{\mathcal V_N\subset\widehat{\mathcal V}_N\subset\mathfrak V_N\}
\subset\Xi_N.
\end{displaymath}
Note that $\widehat{\bf m}$ needs to be selected in the empirical set $\widehat{\mathcal M}_N$, instead of $\mathcal M_N$, because $\mathcal M_N$ depends on $m_1\mapsto\|\Psi_{m_1}^{-1}\|_{\rm op}$ which has no explicit control in general. However, and this is the purpose of Lemma \ref{Xi_event}, we need that with high probability:
\begin{itemize}
 \item All the possible couples of dimensions in $\mathcal M_N$ may be selected in $\widehat{\mathcal M}_N$.
 \item The couple of dimensions $\widehat{\bf m}(\omega)$ selected in $\widehat{\mathcal M}_N(\omega)$ belongs to $\mathfrak M_N$ in order that $\Psi_{\widehat m_1(\omega)}$ satisfies Assumption \ref{condition_m}, and then to apply Lemma \ref{Omega_event} and Theorem \ref{empirical_risk_bound} in the (following) proof of Theorem \ref{risk_bound_adaptive_estimator}.
\end{itemize}
%


%
\subsubsection{Steps of the proof}\label{section_steps_proof_theorem_risk_bound_adaptive_estimator}
The proof of Theorem \ref{risk_bound_adaptive_estimator} is dissected in four steps.
\\
\\
{\bf Step 1.} Let us prove that for any $\mathbf m = (m_1,m_2)\in\widehat{\mathcal M}_N$,
\begin{equation}\label{risk_bound_adaptive_estimator_1}
\|\widehat p_{\widehat{\bf m},t} - p_{I\times J,t}\|_{N}^{2}\leqslant
6\|\widehat p_{\mathbf m,t} - p_{I\times J,t}\|_{N}^{2} +
4\kappa{\rm pen}(\mathbf m) + 11\left(
\|\widehat p_{\widehat{\bf m},t} -\widehat{\Pi}_{\widehat{\bf m}}(p_t)\|_{N}^{2} -
\frac{2}{11}\kappa{\rm pen}(\widehat{\bf m})\right)_+.
\end{equation}
Consider $\mathbf M = (\max(\mathcal N),\max(\mathcal N))$ and $m\in\{\mathbf m,\widehat{\bf m}\}\subset\widehat{\mathcal M}_N\subset\mathcal N^2$. First, let us show that
\begin{eqnarray}
 \|\widehat p_{\widehat{\bf m},t} - p_{I\times J,t}\|_{N}^{2}
 \label{risk_bound_adaptive_estimator_2}
 & \leqslant &
 \|\widehat p_{\mathbf m,t} - p_{I\times J,t}\|_{N}^{2} -
 2(\|\widehat p_{\mathbf m,t} -\widehat\Pi_{\bf m}(p_t)\|_{N}^{2} -
 \kappa{\rm pen}(\mathbf m))\\
 & &
 \hspace{4cm} +
 2(\|\widehat p_{\widehat{\bf m},t} -\widehat\Pi_{\widehat{\bf m}}(p_t)\|_{N}^{2} -
 \kappa{\rm pen}(\widehat{\bf m})) + R_{\bf M}
 \nonumber
\end{eqnarray}
with
\begin{displaymath}
R_{\bf M} = 2\langle\widehat\Pi_{\widehat{\bf m}}(p_t) -
\widehat\Pi_{\bf m}(p_t),\widehat p_{\mathbf M,t} -\widehat\Pi_{\bf M}(p_t)\rangle_N.
\end{displaymath}
Since $\mathcal S_m\subset\mathcal S_{\bf M}$ by Assumption \ref{assumption_projection_spaces}.(1,2),
\begin{displaymath}
\widehat\Pi_m(\widehat p_{\mathbf M,t}) =\widehat p_{m,t}
\quad\textrm{by Lemma \ref{projection_estimator_smaller_model}.}
\end{displaymath}
Then,
\begin{eqnarray*}
 \|\widehat p_{\mathbf M,t} -\widehat p_{m,t}\|_{N}^{2} & = &
 \|\widehat p_{\mathbf M,t}\|_{N}^{2} +\|\widehat p_{m,t}\|_{N}^{2} -
 2\langle\widehat p_{\mathbf M,t},\widehat p_{m,t}\rangle_N\\
 & = &
 \|\widehat p_{\mathbf M,t}\|_{N}^{2} +\|\widehat\Pi_m(\widehat p_{\mathbf M,t})\|_{N}^{2}\\
 & &
 \hspace{2cm} -
 2\underbrace{\langle\widehat p_{\mathbf M,t} -\widehat\Pi_m(\widehat p_{\mathbf M,t}),
 \widehat\Pi_m(\widehat p_{\mathbf M,t})\rangle_N}_{= 0} -
 2\langle\widehat\Pi_m(\widehat p_{\mathbf M,t}),
 \widehat\Pi_m(\widehat p_{\mathbf M,t})\rangle_N\\
 & = &
 \|\widehat p_{\mathbf M,t}\|_{N}^{2} -\|\widehat p_{m,t}\|_{N}^{2}.
\end{eqnarray*}
So, by the definition of $\widehat{\bf m}$,
\begin{eqnarray*}
 \|\widehat p_{\mathbf M,t} -\widehat p_{\widehat{\bf m},t}\|_{N}^{2} +
 2\kappa{\rm pen}(\widehat{\bf m}) & = &
 \|\widehat p_{\mathbf M,t}\|_{N}^{2} -\|\widehat p_{\widehat{\bf m},t}\|_{N}^{2} +
 2\kappa{\rm pen}(\widehat{\bf m})\\
 & \leqslant &
 \|\widehat p_{\mathbf M,t}\|_{N}^{2} -\|\widehat p_{\mathbf m,t}\|_{N}^{2} +
 2\kappa{\rm pen}(\mathbf m) =
 \|\widehat p_{\mathbf M,t} -\widehat p_{\mathbf m,t}\|_{N}^{2} +
 2\kappa{\rm pen}(\mathbf m),
\end{eqnarray*}
and thus
\begin{eqnarray*}
 & &
 \|\widehat p_{\widehat{\bf m},t} - p_{I\times J,t}\|_{N}^{2} -
 \|\widehat p_{\mathbf m,t} - p_{I\times J,t}\|_{N}^{2}\\
 & &
 \hspace{3.5cm} =
 \|\widehat p_{\widehat{\bf m},t} -\widehat p_{\mathbf M,t}\|_{N}^{2} -
 \|\widehat p_{\mathbf m,t} -\widehat p_{\mathbf M,t}\|_{N}^{2} +
 2\langle\widehat p_{\widehat{\bf m},t} -
 \widehat p_{\mathbf m,t},\widehat p_{\mathbf M,t} - p_{I\times J,t}\rangle_N\\
 & &
 \hspace{3.5cm}\leqslant
 2\kappa{\rm pen}(\mathbf m) - 2\kappa{\rm pen}(\widehat{\bf m}) +
 2\langle\widehat p_{\widehat{\bf m},t} -
 \widehat p_{\mathbf m,t},\widehat p_{\mathbf M,t} - p_{I\times J,t}\rangle_N.
\end{eqnarray*}
Moreover, since $\widehat\Pi_m\circ\widehat\Pi_{\bf M} =\widehat\Pi_m$ and $\widehat\Pi_m(\widehat p_{\mathbf M,t}) =\widehat p_{m,t}$,
\begin{eqnarray*}
 & &
 \langle\widehat p_{m,t} -\widehat\Pi_m(p_t),
 \widehat p_{\mathbf M,t} -\widehat\Pi_{\bf M}(p_t)\rangle_N\\
 & &
 \hspace{2cm} =
 \underbrace{\langle\widehat\Pi_m(\widehat p_{\mathbf M,t} -\widehat\Pi_{\bf M}(p_t)),
 \widehat p_{\mathbf M,t} -\widehat\Pi_{\bf M}(p_t) -
 \widehat\Pi_m(\widehat p_{\mathbf M,t} -\widehat\Pi_{\bf M}(p_t))\rangle_N}_{= 0} +
 \|\widehat p_{m,t} -\widehat\Pi_m(p_t)\|_{N}^{2},
\end{eqnarray*}
leading to
\begin{eqnarray*}
 \langle\widehat p_{\widehat{\bf m},t} -
 \widehat p_{\mathbf m,t},\widehat p_{\mathbf M,t} - p_{I\times J,t}\rangle_N
 & = &
 \langle\widehat p_{\widehat{\bf m},t} -
 \widehat p_{\mathbf m,t},\widehat p_{\mathbf M,t} -\widehat\Pi_{\bf M}(p_t)\rangle_N +
 \underbrace{\langle\widehat p_{\widehat{\bf m},t} -
 \widehat p_{\mathbf m,t},\widehat\Pi_{\bf M}(p_t) - p_{I\times J,t}\rangle_N}_{= 0}\\
 & = &
 \langle\widehat p_{\widehat{\bf m},t} -
 \widehat\Pi_{\widehat{\bf m}}(p_t),\widehat p_{\mathbf M,t} -\widehat\Pi_{\bf M}(p_t)\rangle_N\\
 & &
 \hspace{2cm} +
 \langle\widehat\Pi_{\widehat{\bf m}}(p_t) -
 \widehat\Pi_{\bf m}(p_t),\widehat p_{\mathbf M,t} -\widehat\Pi_{\bf M}(p_t)\rangle_N\\
 & &
 \hspace{4cm} +
 \langle\widehat\Pi_{\bf m}(p_t) -
 \widehat p_{\mathbf m,t},\widehat p_{\mathbf M,t} -\widehat\Pi_{\bf M}(p_t)\rangle_N\\
 & = &
 \|\widehat p_{\widehat{\bf m},t} -\widehat\Pi_{\widehat{\bf m}}(p_t)\|_{N}^{2} -
 \|\widehat p_{\mathbf m,t} -\widehat\Pi_{\bf m}(p_t)\|_{N}^{2}\\
 & &
 \hspace{2cm} +
 \langle\widehat\Pi_{\widehat{\bf m}}(p_t) -
 \widehat\Pi_{\bf m}(p_t),\widehat p_{\mathbf M,t} -\widehat\Pi_{\bf M}(p_t)\rangle_N.
\end{eqnarray*}
Therefore,
\begin{eqnarray*}
 \|\widehat p_{\widehat{\bf m},t} - p_{I\times J,t}\|_{N}^{2}
 & \leqslant &
 \|\widehat p_{\mathbf m,t} - p_{I\times J,t}\|_{N}^{2} +
 2\kappa{\rm pen}(\mathbf m) - 2\kappa{\rm pen}(\widehat{\bf m}) +
 2\langle\widehat p_{\widehat{\bf m},t} -
 \widehat p_{\mathbf m,t},\widehat p_{\mathbf M,t} - p_{I\times J,t}\rangle_N\\
 & \leqslant &
 \|\widehat p_{\mathbf m,t} - p_{I\times J,t}\|_{N}^{2} -
 2(\|\widehat p_{\mathbf m,t} -\widehat\Pi_{\bf m}(p_t)\|_{N}^{2} -
 \kappa{\rm pen}(\mathbf m))\\
 & &
 \hspace{4cm} +
 2(\|\widehat p_{\widehat{\bf m},t} -\widehat\Pi_{\widehat{\bf m}}(p_t)\|_{N}^{2} -
 \kappa{\rm pen}(\widehat{\bf m})) + R_{\bf M}.
\end{eqnarray*}
Now, let us find a suitable bound on $R_{\bf M}$. Since $\varphi_1,\dots,\varphi_{\max(\mathcal N)}$ are linearly independent, one may consider the basis $(\varphi_{1}^{N},\dots,\varphi_{\max(\mathcal N)}^{N})$ of $\mathcal S_{\varphi,\max(\mathcal N)}$, orthonormal for the empirical inner product $\langle .,.\rangle_{N,1}$, obtained from $(\varphi_1,\dots,\varphi_{\max(\mathcal N)})$ via the Gram-Schmidt process. Then, $(\varphi_{j}^{N}\otimes\psi_{\ell})_{j,\ell}$ is an orthonormal basis of $\mathcal S_{\bf M}$ equipped with $\langle .,.\rangle_N$, so that
\begin{eqnarray*}
 \widehat\Pi_{\bf M}(p_t) & = &
 \sum_{j = 1}^{\max(\mathcal N)}\sum_{\ell = 1}^{\max(\mathcal N)}
 \langle p_t,\varphi_{j}^{N}\otimes\psi_{\ell}\rangle_N(\varphi_{j}^{N}\otimes\psi_{\ell})\\
 & &
 \hspace{2cm}{\rm and}\quad
 \widehat p_{\mathbf M,t} =
 \widehat\Pi_{\bf M}(\widehat p_{\mathbf M,t}) =
 \sum_{j = 1}^{\max(\mathcal N)}\sum_{\ell = 1}^{\max(\mathcal N)}
 \langle\widehat p_{\mathbf M,t},\varphi_{j}^{N}\otimes\psi_{\ell}\rangle_N(\varphi_{j}^{N}\otimes\psi_{\ell}).
\end{eqnarray*}
Thus,
\begin{eqnarray}
 |R_{\bf M}| & = &
 2\left|\left\langle
 \widehat\Pi_{\widehat{\bf m}}(p_t) -\widehat\Pi_{\bf m}(p_t),
 \sum_{j = 1}^{m_1\vee\widehat m_1}
 \sum_{\ell = 1}^{m_2\vee\widehat m_2}
 \langle\widehat p_{\mathbf M,t} - p_t,
 \varphi_{j}^{N}\otimes\psi_{\ell}\rangle_N(\varphi_{j}^{N}\otimes\psi_{\ell})
 \right\rangle_N\right|
 \nonumber\\
 & \leqslant &
 \frac{1}{4}\|\widehat\Pi_{\widehat{\bf m}}(p_t) -\widehat\Pi_{\bf m}(p_t)\|_{N}^{2} +
 4\left\|\sum_{j = 1}^{m_1\vee\widehat m_1}
 \sum_{\ell = 1}^{m_2\vee\widehat m_2}
 \langle\widehat p_{\mathbf M,t} - p_t,
 \varphi_{j}^{N}\otimes\psi_{\ell}\rangle_N(\varphi_{j}^{N}\otimes\psi_{\ell})
 \right\|_{N}^{2}
 \nonumber\\
 & \leqslant &
 \frac{1}{2}\|\widehat\Pi_{\widehat{\bf m}}(p_t) - p_{I\times J,t}\|_{N}^{2} +
 \frac{1}{2}\|\widehat\Pi_{\bf m}(p_t) - p_{I\times J,t}\|_{N}^{2} +
 4\sum_{j = 1}^{\widehat m_1}\sum_{\ell = 1}^{\widehat m_2}
 \langle\widehat p_{\mathbf M,t} - p_t,\varphi_{j}^{N}\otimes\psi_{\ell}\rangle_{N}^{2}
 \nonumber\\
 & &
 \hspace{8cm} +
 4\sum_{j = 1}^{m_1}\sum_{\ell = 1}^{m_2}
 \langle\widehat p_{\mathbf M,t} - p_t,\varphi_{j}^{N}\otimes\psi_{\ell}\rangle_{N}^{2}
 \nonumber\\
 & = &
 \frac{1}{2}\|\widehat\Pi_{\widehat{\bf m}}(p_t) - p_{I\times J,t}\|_{N}^{2} +
 \frac{1}{2}\|\widehat\Pi_{\bf m}(p_t) - p_{I\times J,t}\|_{N}^{2}
 \nonumber\\
 & &
 \hspace{5cm} +
 4\|\widehat\Pi_{\widehat{\bf m}}(\widehat p_{{\bf M},t} - p_t)\|_{N}^{2} +
 4\|\widehat\Pi_{\bf m}(\widehat p_{{\bf M},t} - p_t)\|_{N}^{2}
 \nonumber\\
 \label{risk_bound_adaptive_estimator_3}
 & = &
 \frac{1}{2}\|\widehat p_{\widehat{\bf m},t} - p_{I\times J,t}\|_{N}^{2} +
 \frac{1}{2}\|\widehat p_{\mathbf m,t} - p_{I\times J,t}\|_{N}^{2} +
 \frac{7}{2}\|\widehat p_{\widehat{\bf m},t} -\widehat\Pi_{\widehat{\bf m}}(p_t)\|_{N}^{2} +
 \frac{7}{2}\|\widehat p_{\mathbf m,t} -\widehat\Pi_{\bf m}(p_t)\|_{N}^{2}.
\end{eqnarray}
Therefore, by Inequalities (\ref{risk_bound_adaptive_estimator_2}) and (\ref{risk_bound_adaptive_estimator_3}),
\begin{eqnarray*}
 \|\widehat p_{\widehat{\bf m},t} - p_{I\times J,t}\|_{N}^{2}
 & \leqslant &
 3\|\widehat p_{\mathbf m,t} - p_{I\times J,t}\|_{N}^{2} +
 3\|\widehat p_{\mathbf m,t} -\widehat\Pi_{\bf m}(p_t)\|_{N}^{2}\\
 & &
 \hspace{3cm} +
 4\kappa{\rm pen}(\mathbf m) + 11\left(
 \|\widehat p_{\widehat{\bf m},t} -\widehat{\Pi}_{\widehat{\bf m}}(p_t)\|_{N}^{2} -
 \frac{2}{11}\kappa{\rm pen}(\widehat{\bf m})\right)\\
 & \leqslant &
 6\|\widehat p_{\mathbf m,t} - p_{I\times J,t}\|_{N}^{2} +
 4\kappa{\rm pen}(\mathbf m) + 11\left(
 \|\widehat p_{\widehat{\bf m},t} -\widehat{\Pi}_{\widehat{\bf m}}(p_t)\|_{N}^{2} -
 \frac{2}{11}\kappa{\rm pen}(\widehat{\bf m})\right)_+.
\end{eqnarray*}
{\bf Step 2.} First of all, consider
\begin{displaymath}
\Omega_N :=\bigcap_{m_1\in\mathfrak V_N}\Omega_{m_1}.
\end{displaymath}
For every $m_1\in\mathfrak V_N$,
\begin{displaymath}
\mathfrak L_{\varphi}(m_1)(\|\Psi_{m_1}\|_{\rm op}\vee 1)
\leqslant
\mathfrak c_{\varphi}^{2}m_1(\|\Psi_{m_1}\|_{\rm op}^{2}\vee 1)
\leqslant
4\mathfrak d\frac{NT}{\log(NT)}
\leqslant
\frac{\mathfrak c_{\Lambda}}{2}\cdot\frac{NT}{\log(NT)},
\end{displaymath}
leading to
\begin{equation}\label{risk_bound_adaptive_estimator_4}
\mathbb P(\Omega_{N}^{c})
\leqslant
\sum_{m_1\in\mathfrak V_N}\mathbb P(\Omega_{m_1}^{c})
\leqslant\frac{\mathfrak c_{\ref{Omega_event}}}{N^{p - 1}}
\quad\textrm{by Lemma \ref{Omega_event} (requiring Assumption \ref{condition_m}).}
\end{equation}
Now, since $\Xi_N =\{\mathcal M_N\subset\widehat{\mathcal M}_N\subset\mathfrak M_N\}$, on the event $\Xi_N\cap\Omega_N$, Inequality (\ref{risk_bound_adaptive_estimator_1}) remains true for every $\mathbf m\in\mathcal M_N$. Then,
\begin{eqnarray}
 \|\widehat p_{\widehat{\bf m},t} - p_{I\times J,t}\|_{N}^{2}
 & = &
 \|\widehat p_{\widehat{\bf m},t} - p_{I\times J,t}\|_{N}^{2}\mathbf 1_{\Xi_N\cap\Omega_N} +
 \|\widehat p_{\widehat{\bf m},t} - p_{I\times J,t}\|_{N}^{2}\mathbf 1_{\Xi_{N}^{c}\cup\Omega_{N}^{c}}
 \nonumber\\
 & \leqslant &
 \min_{\mathbf m\in\mathcal M_N}\{6\|\widehat p_{\mathbf m,t} - p_{I\times J,t}\|_{N}^{2} +
 4\kappa{\rm pen}(\mathbf m)\}
 \nonumber\\
 & &
 \hspace{1.5cm} +
 11\left(
 \|\widehat p_{\widehat{\bf m},t} -\widehat{\Pi}_{\widehat{\bf m}}(p_t)\|_{N}^{2} -
 \frac{2}{11}\kappa{\rm pen}(\widehat{\bf m})\right)_+\mathbf 1_{\Xi_N\cap\Omega_N}
 \nonumber\\
 & &
 \hspace{3cm} +
 \left(\|\widehat p_{\widehat{\bf m},t} -\widehat\Pi_{\widehat{\bf m}}(p_t)\|_{N}^{2} +
 \min_{\tau\in\mathcal S_{\widehat{\bf m}}}\|\tau - p_{I\times J,t}\|_{N}^{2}\right)
 (\mathbf 1_{\Xi_{N}^{c}} +\mathbf 1_{\Omega_{N}^{c}})
 \nonumber\\
 \label{risk_bound_adaptive_estimator_5}
 & \leqslant &
 \min_{\mathbf m\in\mathcal M_N}\{6\|\widehat p_{\mathbf m,t} - p_{I\times J,t}\|_{N}^{2} +
 4\kappa{\rm pen}(\mathbf m)\} + A + B
\end{eqnarray}
where, by Lemma \ref{empirical_risk_bound_key_lemma},
\begin{eqnarray*}
 A & := &
 \left(
 \sup_{\tau\in\mathcal S_{\widehat{\bf m}} :\|\tau\|_N = 1}\nu_N(\tau)^2 +\|p_{I\times J,t}\|_{N}^{2}\right)
 (\mathbf 1_{\Xi_{N}^{c}} +\mathbf 1_{\Omega_{N}^{c}})\\
 & &
 \hspace{2cm}{\rm and}\quad
 B :=
 11\left(
 \sup_{\tau\in\mathcal S_{\widehat{\bf m}} :\|\tau\|_N = 1}\nu_N(\tau)^2 -
 \frac{2}{11}\kappa{\rm pen}(\widehat{\bf m})\right)_+\mathbf 1_{\Xi_N\cap\Omega_N}.
\end{eqnarray*}
Let us find suitable bounds on $\mathbb E(A)$ and $\mathbb E(B)$. On the one hand, since
\begin{displaymath}
\sup_{\tau\in\mathcal S_{\bf m} :\|\tau\|_N = 1}\nu_N(\tau)^2
\leqslant 4m_1\mathfrak L_{\psi}(m_2)
\textrm{ $;$ }\forall\mathbf m = (m_1,m_2)\in\mathcal N^2
\end{displaymath}
as established in the proof of Theorem \ref{empirical_risk_bound} (see Inequality (\ref{empirical_risk_bound_2})), since $\mathbb E(\|p_{I\times J,t}\|_{N}^{4})^{1/2}\leqslant R(t)$ as established in the proof of Theorem \ref{f_weighted_risk_bound_truncated} (see Step 1), and by Inequality (\ref{risk_bound_adaptive_estimator_4}) and Lemma \ref{Xi_event},
\begin{displaymath}
\mathbb E(A)\leqslant
\mathfrak c_1\left(R(t) +\sum_{m_1,m_2 = 1}^{\max(\mathcal N)}m_1\mathfrak L_{\psi}(m_2)\right)
(\mathbb P(\Omega_{N}^{c})^{\frac{1}{2}} +\mathbb P(\Xi_{N}^{c})^{\frac{1}{2}})
\leqslant\frac{\mathfrak c_2(1 + R(t))}{N},
\end{displaymath}
where $\mathfrak c_1$ and $\mathfrak c_2$ are positive constants not depending on $N$ and $t$. On the other hand, since $\widehat{\bf m}\in\mathfrak M_N$ on the event $\Xi_N\cap\Omega_N$,
\begin{displaymath}
\mathbb E(B)
\leqslant
11\sum_{\mathbf m\in\mathfrak M_N}
\mathbb E\left[\left(\sup_{\tau\in\mathcal S_{\bf m} :\|\tau\|_N = 1}\nu_N(\tau)^2
-\frac{2}{11}\kappa{\rm pen}(\mathbf m)\right)_+\mathbf 1_{\Xi_N\cap\Omega_N}\right].
\end{displaymath}
Moreover, for every $\mathbf m = (m_1,m_2)\in\mathfrak M_N$,
\begin{displaymath}
\{\tau\in\mathcal S_{\bf m} :\|\tau\|_N = 1\}
\subset\{\tau\in\mathcal S_{\bf m} :\|\tau\|_{f}^{2}\leqslant 2\}
\quad {\rm on}\quad
\Omega_{m_1}\supset\Omega_N.
\end{displaymath}
Then,
\begin{displaymath}
\mathbb E(B)
\leqslant
11\sum_{\mathbf m\in\mathfrak M_N}\underbrace{
\mathbb E\left[\left(\sup_{\tau\in\mathcal S_{\bf m} :\|\tau\|_f\leqslant 2}\nu_N(\tau)^2
-\frac{2}{11}\kappa{\rm pen}(\mathbf m)\right)_+\mathbf 1_{\Xi_N\cap\Omega_N}\right]}_{=: b_{\bf m}}.
\end{displaymath}
 {\bf Step 3.} To prepare for the application of the integrated version (see Comte \cite{COMTE14}, Theorem A.1) of the Klein and Rio extension of Talagrand's inequality (see Klein and Rio \cite{KR05}, Theorem 1.2), this step deals with a suitable control of $b_{\bf m}$, $\mathbf m = (m_1,m_2)\in\mathfrak M_N$. First, for every $\tau =\sum_{j,\ell}T_{j,\ell}(\varphi_j\otimes\psi_{\ell})$ belonging to $\mathcal F :=\{\tau\in\mathcal S_{\bf m} :\|\tau\|_{f}^{2}\leqslant 2\}$,
\begin{displaymath}
\|\tau\|^2 = {\rm trace}(TT^*\Psi_{m_1}\Psi_{m_1}^{-1})
\leqslant\|\Psi_{m_1}^{-1}\|_{\rm op}
\underbrace{{\rm trace}(T^*\Psi_{m_1}T)}_{=\|\tau\|_{f}^{2}}\leqslant
2\|\Psi_{m_1}^{-1}\|_{\rm op},
\end{displaymath}
and then
\begin{eqnarray*}
 \|\tau\|_{\infty}
 & = &
 \sup_{(x,y)\in I\times J}\left\{
 \sum_{j = 1}^{m_1}\sum_{\ell = 1}^{m_2}
 T_{j,\ell}\varphi_j(x)\psi_{\ell}(y)\right\}\\
 & \leqslant &
 \mathfrak L_{\psi}(m_2)^{\frac{1}{2}}
 \sup_{x\in I}\left\{ 
 \sum_{j = 1}^{m_1}|\varphi_j(x)|
 \sum_{\ell = 1}^{m_2}|T_{j,\ell}|\right\}\leqslant
 \sqrt 2\mathfrak L_{\varphi}(m_1)^{\frac{1}{2}}
 \mathfrak L_{\psi}(m_2)^{\frac{1}{2}}\|\Psi_{m_1}^{-1}\|_{\rm op}^{\frac{1}{2}}.
\end{eqnarray*}
Since $m_1\in\mathfrak V_N$, and since $\mathfrak L_{\psi}(m_2)\leqslant m_1\mathfrak L_{\psi}(m_2)\leqslant N$ by the definition of $\mathcal U_N$,
\begin{displaymath}
\sup_{\tau\in\mathcal F}\|\tau\|_{\infty}
\leqslant M
\quad {\rm with}\quad
M =\frac{\mathfrak c_3N}{\log(N)^{1/2}}
\quad {\rm and}\quad
\mathfrak c_3 = 2\sqrt{2\mathfrak dT}.
\end{displaymath}
Now, since
\begin{displaymath}
\mathbb E\left(\sup_{\tau\in\mathcal F}\nu_N(\tau)^2\right)\leqslant
\frac{2m_1\mathfrak L_{\psi}(m_2)}{N}
\end{displaymath}
as established in the proof of Theorem \ref{empirical_risk_bound} (see Step 1),
\begin{displaymath}
\mathbb E\left(\sup_{\tau\in\mathcal F}|\nu_N(\tau)|\right)\leqslant H
\quad {\rm with}\quad
H^2 =\frac{2m_1\mathfrak L_{\psi}(m_2)}{N},
\end{displaymath}
and then
\begin{displaymath}
N\sup_{\tau\in\mathcal F}\mathbb E(\nu_N(\tau)^2)\leqslant\upsilon
\quad {\rm with}\quad
\upsilon = NH^2.
\end{displaymath}
By the aforementioned Talagrand's inequality, for $\alpha := a\log(N)$,
\begin{displaymath}
\mathbb E\left[\left(\sup_{\tau\in\mathcal F}\nu_N(\tau)^2 -
2(1 + 2\alpha)H^2\right)_+\right]\leqslant
A(N,\alpha,H,\upsilon) + B(N,\alpha,M,H),
\end{displaymath}
where
\begin{eqnarray*}
 A(N,\alpha,H,\upsilon) & := &
 \frac{24\upsilon}{N}\exp\left(-\frac{\alpha NH^2}{6\upsilon}\right)\\
 & = &
 \frac{48m_1\mathfrak L_{\psi}(m_2)}{N}
 e^{-\frac{a}{6}\log(N)}
 \leqslant 48N^{-\frac{a}{6}}
 \quad\textrm{because $\mathbf m\in\mathcal U_N$},
\end{eqnarray*}
and
\begin{eqnarray*}
 B(N,\alpha,M,H) & := &
 \frac{\mathfrak c_4M^2}{N^2}\exp\left(-\frac{\sqrt{2}}{42}\cdot\frac{NH\sqrt\alpha}{M}\right)
 \quad {\rm with}\quad
 \mathfrak c_4 = 7056\\
 & = &
 \frac{\mathfrak c_{3}^{2}\mathfrak c_4}{\log(N)}\exp\left(-\frac{\sqrt{2}}{42}\cdot
 \frac{\sqrt{2a}\log(N)}{\mathfrak c_3}\right)
 \leqslant
 \mathfrak c_{3}^{2}\mathfrak c_4
 N^{-\frac{\sqrt{2a}}{84\sqrt{\mathfrak dT}}}.
\end{eqnarray*}
Therefore, since $a\geqslant (2\cdot 84\sqrt{\mathfrak dT})^2/2$, and since
\begin{displaymath}
\kappa\geqslant\kappa_0 = 44a\geqslant
\frac{22(1 + 2a\log(N))}{1 +\log(N)} =
11(1 + 2a\log(N))\frac{H^2}{{\rm pen}(\mathbf m)},
\end{displaymath}
there exists a constant $\mathfrak c_5 > 0$, not depending on $N$, $m_1$, $m_2$ and $t$, such that
\begin{eqnarray*}
 b_{\bf m} & \leqslant &
 \mathbb E\left[\left(\sup_{\tau\in\mathcal S_{\bf m} :\|\tau\|_f\leqslant 2}\nu_N(\tau)^2
 -\frac{2}{11}\kappa{\rm pen}(\mathbf m)\right)_+\right]\\
 &\leqslant &
 \mathbb E\left[\left(\sup_{\tau\in\mathcal F}\nu_N(\tau)^2 -
 2(1 + 2\alpha)H^2\right)_+\right]\leqslant\frac{\mathfrak c_5}{N^2}.
\end{eqnarray*}
{\bf Step 4 (conclusion).} By Steps 2 and 3,
\begin{displaymath}
\mathbb E(A)\leqslant
\frac{\mathfrak c_2(1 + R(t))}{N}
\quad {\rm and}\quad
\mathbb E(B)\leqslant
11\sum_{\mathbf m\in\mathfrak M_N}b_{\bf m}
\leqslant
\frac{11\mathfrak c_5}{N}.
\end{displaymath}
Thus, by Inequality (\ref{risk_bound_adaptive_estimator_5}), there exists a constant $\mathfrak c_6 > 0$, not depending on $N$ and $t$, such that
\begin{displaymath}
\mathbb E(\|\widehat p_{\widehat{\bf m},t} - p_{I\times J,t}\|_{N}^{2})
\leqslant
6\min_{\mathbf m\in\mathcal M_N}\{\mathbb E(\|\widehat p_{\mathbf m,t} - p_{I\times J,t}\|_{N}^{2}) +
\kappa{\rm pen}(\mathbf m)\} +
\frac{\mathfrak c_6(1 + R(t))}{N}.
\end{displaymath}
Moreover, Theorem \ref{risk_bound_adaptive_estimator}.(2) is a consequence of Theorem \ref{risk_bound_adaptive_estimator}.(1) thanks to Inequality (\ref{Gaussian_bound_density}) as in the last step of the proof of Theorem \ref{section_proof_f_weighted_risk_bound_truncated}.
%


%
\subsubsection{Proof of Lemma \ref{projection_estimator_smaller_model}}\label{section_proof_projection_estimator_smaller_model}
By the definition of $\widehat\Pi_{\bf m}$ (see (\ref{empirical_orthogonal_projection})),
\begin{displaymath}
\widehat\Pi_{\bf m}(\widehat p_{\mathbf M,t}) =
\sum_{j = 1}^{m_1}\sum_{\ell = 1}^{m_2}
[\widehat\Psi_{m_1}^{-1}\widehat P_{\bf m}(
\widehat p_{\mathbf M,t})]_{j,\ell}(\varphi_j\otimes\psi_{\ell})
\end{displaymath}
with, for every $j\in\{1,\dots,m_1\}$ and $\ell\in\{1,\dots,m_2\}$,
\begin{eqnarray*}
 \widehat P_{\bf m}(\widehat p_{\mathbf M,t})
 & = &
 \frac{1}{NT}\sum_{i = 1}^{N}\int_{0}^{T}\int_{-\infty}^{\infty}
 \widehat p_{\mathbf M,t}(X_{s}^{i},y)\varphi_j(X_{s}^{i})\psi_{\ell}(y)dyds\\
 & = &
 \sum_{j' = 1}^{M_1}\sum_{\ell' = 1}^{M_2}
 [\widehat\Psi_{M_1}^{-1}\widehat Z_{\mathbf M,t}]_{j',\ell'}\\
 & &
 \hspace{2cm}\times
 \frac{1}{NT}\sum_{i = 1}^{N}\int_{0}^{T}\varphi_j(X_{s}^{i})\varphi_{j'}(X_{s}^{i})
 \underbrace{\int_{-\infty}^{\infty}\psi_{\ell}(y)\psi_{\ell'}(y)dy}_{=\delta_{\ell,\ell'}}ds\\
 & = &
 \sum_{j' = 1}^{M_1}
 [\widehat\Psi_{M_1}^{-1}\widehat Z_{\mathbf M,t}]_{j',\ell}\underbrace{
 \frac{1}{NT}\sum_{i = 1}^{N}\int_{0}^{T}
 \varphi_j(X_{s}^{i})\varphi_{j'}(X_{s}^{i})ds}_{= [\widehat\Psi_{M_1}]_{j,j'}}\\
 & = &
 [\widehat Z_{\mathbf M,t}]_{j,\ell} = [\widehat Z_{\mathbf m,t}]_{j,\ell}
 \quad {\rm because}\quad
 \mathcal S_{\bf m}\subset\mathcal S_{\bf M}.
\end{eqnarray*}
Therefore,
\begin{displaymath}
\widehat\Pi_{\bf m}(\widehat p_{\mathbf M,t}) =
\sum_{j = 1}^{m_1}\sum_{\ell = 1}^{m_2}
[\widehat\Psi_{m_1}^{-1}\widehat Z_{\mathbf m,t}]_{j,\ell}(\varphi_j\otimes\psi_{\ell}) =
\widehat p_{\mathbf m,t}.
\end{displaymath}
%


%
\subsection{Proof of Proposition \ref{risk_bound_adaptive_estimator_bis}}
The proof is mainly the same as that of Theorem \ref{risk_bound_adaptive_estimator}, except for the control $M$ of $\sup_{\tau\in\mathcal F}\|\tau\|_{\infty}$ and the control $\upsilon$ of $N\sup_{\tau\in\mathcal F}\mathbb E(\nu_N(\tau)^2)$ involved in Talagrand's inequality (see Step 3 in Subsubsection \ref{section_steps_proof_theorem_risk_bound_adaptive_estimator}).
\\
\\
{\bf Step 3 (bis).} Thanks to the Klein and Rio version of Talagrand's inequality (see Klein and Rio \cite{KR05}), this step deals with a suitable control of $b_{\bf m}$, $\mathbf m = (m_1,m_2)\in\mathfrak M_N$. First, for every $\tau =\sum_{j,\ell}T_{j,\ell}(\varphi_j\otimes\psi_{\ell})$ belonging to $\mathcal F =\{\tau\in\mathcal S_{\bf m} :\|\tau\|_{f}^{2}\leqslant 2\}$, we still get
\begin{eqnarray*}
 \|\tau\|_{\infty}
 & \leqslant &
 \sqrt 2\mathfrak L_{\varphi}(m_1)^{\frac{1}{2}}
 \mathfrak L_{\psi}(m_2)^{\frac{1}{2}}\|\Psi_{m_1}^{-1}\|_{\rm op}^{\frac{1}{2}}.
\end{eqnarray*}
By Assumption \ref{assumption_projection_spaces}.(3), and since $\mathbf m = (m_1,m_2)$ belongs to $\mathfrak M_N$,
\begin{displaymath}
\mathfrak L_{\varphi}(m_1)^{\frac{1}{2}}
\|\Psi_{m_1}^{-1}\|_{\rm op}^{\frac{1}{2}}\leqslant
\mathfrak c_{\varphi}^{\frac{1}{2}}m_{1}^{\frac{1}{4}}
(\mathfrak c_{\varphi}^{2}m_1\|\Psi_{m_1}^{-1}\|_{\rm op}^{2})^{\frac{1}{4}}\leqslant
\mathfrak c_{\varphi}^{\frac{1}{2}}m_{1}^{\frac{1}{4}}
\left(4\mathfrak d \frac{NT}{\log(NT)}\right)^{\frac{1}{4}}.
\end{displaymath}
Thus, for $NT\geqslant e$,
\begin{displaymath}
\sup_{\tau\in\mathcal F}\|\tau\|_{\infty}
\leqslant M
\quad {\rm with}\quad
M =\mathfrak c_{3,b}m_{1}^{\frac{1}{4}}\mathfrak L_{\psi}(m_2)^{\frac{1}{2}}N^{\frac{1}{4}} 
\quad {\rm and}\quad
\mathfrak c_{3,b} = 2\sqrt{\mathfrak c_{\varphi}}(\mathfrak dT)^{\frac{1}{4}}.
\end{displaymath}
Now, since
\begin{displaymath}
\mathbb E\left(\sup_{\tau\in\mathcal F}\nu_N(\tau)^2\right)\leqslant
\frac{2m_1\mathfrak L_{\psi}(m_2)}{N}
\end{displaymath}
as established in the proof of Theorem \ref{empirical_risk_bound} (see Step 1),
\begin{displaymath}
\mathbb E\left(\sup_{\tau\in\mathcal F}|\nu_N(\tau)|\right)\leqslant H
\quad {\rm with}\quad
H^2 =\frac{2m_1\mathfrak L_{\psi}(m_2)}{N}.
\end{displaymath}
Lastly,
\begin{eqnarray*}
 & &
 \sup_{\tau\in\mathcal F}\left\{
 {\rm var}\left(\frac{1}{T}\int_{0}^{T}(
 \tau(X_s,X_{s + t}) -\mathbb E(\tau(X_s,X_{s + t})|X_s))ds\right)\right\}\\
 & &
 \hspace{4cm}
 \leqslant\sup_{\tau\in\mathcal F}
 \mathbb E\left[\left(\frac{1}{T}\int_{0}^{T}\tau(X_s,X_{s + t})ds\right)^2\right]
 \leqslant
 p_0\sup_{\tau\in\mathcal F}\|\tau\|_{f}^{2}
 \leqslant\upsilon
 \quad {\rm with }\quad\upsilon = 2p_0.
\end{eqnarray*}
By the aforementioned Talagrand's inequality, for $\alpha = 1/4$,
\begin{displaymath}
\mathbb E\left[\left(\sup_{\tau\in\mathcal F}\nu_N(\tau)^2 - 3H^2\right)_+\right]\leqslant
A(N,H,\upsilon) + B(N,M,H),
\end{displaymath}
where
\begin{displaymath}
A(N,H,\upsilon) :=
\frac{24\upsilon}{N}\exp\left(-\frac{NH^2}{24\upsilon}\right) = 
\frac{48p_0}{N}\exp\left(-\frac{m_1\mathfrak L_{\psi}(m_2)}{24p_0} \right),
\end{displaymath}
and
\begin{eqnarray*}
 B(N,M,H) & := &
 \frac{\mathfrak c_{4,b}M^2}{N^2}\exp\left(-\frac{\sqrt 2}{84}\cdot\frac{NH}{M}\right)
 \quad {\rm with}\quad
 \mathfrak c_{4,b} = 7056\\
 & \leqslant &
 \frac{\mathfrak c_{4,b}\mathfrak c_{3,b}^{2}}{\sqrt{N}}
 \exp\left(-\frac{m_{1}^{1/4}N^{1/4}}{42\mathfrak c_{3,b}}\right)
 \quad {\rm because}\quad\mathbf m\in\mathcal U_N.
\end{eqnarray*}
Since $m_1\geqslant 1$,
\begin{displaymath}
B(N,M,H)\leqslant
\frac{\mathfrak c_{5,b}}{\sqrt N}
\exp(-\mathfrak c_{6,b}N^{\frac{1}{4}})
\quad {\rm with}\quad
\mathfrak c_{5,b} =\mathfrak c_{4,b}\mathfrak c_{3,b}^{2}
\quad {\rm and}\quad
\mathfrak c_{6,b} =\frac{1}{42\mathfrak c_{3,b}}.
\end{displaymath}
So, for $\kappa_b\geqslant\kappa_{b,0} = 33/2$ and $t\in [t_0,T]$,
\begin{eqnarray*}
 b_{\bf m} & \leqslant &
 \mathbb E\left[\left(\sup_{\tau\in\mathcal S_{\bf m} :\|\tau\|_f\leqslant 2}\nu_N(\tau)^2 -
 \frac{2}{11}\kappa{\rm pen}(\mathbf m)\right)_+\right]
 \leqslant 
 \mathbb E\left[\left(\sup_{\tau\in\mathcal F}\nu_N(\tau)^2 - 3H^2\right)_+\right]\\
 &\leqslant &
 \frac{48p_0}{N}
 \exp\left(-\frac{m_1\mathfrak L_{\psi}(m_2)}{24p_0}\right) +
 \frac{\mathfrak c_{5,b}}{\sqrt N}\exp(-\mathfrak c_{6,b}N^{\frac{1}{4}}).
\end{eqnarray*}
Therefore, by Assumption \ref{condition_m_bis}, there exists a constant $\mathfrak c_{7,b} > 0$, not depending on $N$, such that
\begin{displaymath}
\mathbb E(B)
\leqslant 11\sum_{\mathbf m\in\mathfrak M_N}b_{\bf m}\leqslant
\frac{\mathfrak c_{7,b}}{N}.
\end{displaymath}
%


%

%
\end{document}